\theoremstyle{plain}
 \newtheorem{thm}{Theorem}[section]
 \newtheorem{prop}{Proposition}[section]
 \newtheorem{lem}{Lemma}[section]
 \newtheorem{cor}{Corollary}[section]
\theoremstyle{definition}
 \newtheorem{ex}{Example}[section]
 \newtheorem{defn}{Definition}[section]
\theoremstyle{remark}
 \newtheorem{rmk}{Remark}[section]
 \numberwithin{equation}{section}
\newcommand{\N}{{\mathbb N}}
\newcommand{\Q}{{\mathbb Q}}
\newcommand{\Z}{{\mathbb Z}}
\newcommand{\Gm}{\mathbb{G}_{\mr{m}}}
\newcommand{\Gml}{\mathbb{G}_{\mr{m,log}}}
\newcommand{\Gmlb}{\overline{\mathbb{G}}_{\mr{m,log}}}
\newcommand{\mr}{\mathrm}
\newcommand{\mc}{\mathcal}
\newcommand{\fsS}{(\mr{fs}/S)}
\newcommand{\fsSet}{(\mr{fs}/S)_{\mr{\acute{e}t}}}
\newcommand{\fsSfl}{(\mr{fs}/S)_{\mr{fl}}}
\newcommand{\fsSket}{(\mr{fs}/S)_{\mr{k\acute{e}t}}}
\newcommand{\fsSkfl}{(\mr{fs}/S)_{\mr{kfl}}}
\newcommand{\Set}{S_{\mr{\acute{e}t}}}
\newcommand{\Sket}{S_{\mr{k\acute{e}t}}}
\newcommand{\Sfl}{S_{\mr{fl}}}
\newcommand{\Skfl}{S_{\mr{kfl}}}
\newcommand{\Spec}{\mathop{\mr{Spec}}}
\title[Extending tamely ramified strict 1-motives into k\'et log 1-motives]{Extending tamely ramified strict 1-motives into k\'et log 1-motives}
\subjclass[2020]{14D06 (primary), 14A21, 14K99, 11G99 (secondary)}
\keywords{1-motives, log geometry, log 1-motives, k\'et log 1-motives}
\address{
    Heer Zhao, 
    Fakult\"at f\"ur Mathematik, 
    Universit\"at Duisburg-Essen, 
    Essen 45117, 
    Germany, 
    	heer.zhao@uni-due.de}
\begin{document}

\vspace{18mm} \setcounter{page}{1} \thispagestyle{empty}

\begin{abstract}
We define k\'et abelian schemes, k\'et 1-motives, and k\'et log 1-motives, and formulate duality theory for these objects. Then we show that tamely ramified strict 1-motives over a complete discrete valuation field can be extended uniquely to k\'et log 1-motives over the corresponding discrete valuation ring. As an application, we present a proof to a result of Kato stated in \cite[\S 4.3]{kat4} without proof.
\end{abstract}

\maketitle
\setcounter{tocdepth}{2}
\tableofcontents

\section*{Notation and conventions}
Let $S$ be an fs log scheme. We denote by $(\mr{fs}/S)$ the category of fs log schemes over $S$, and denote by $(\mr{fs}/S)_{\mr{\acute{e}t}}$ (resp. $(\mr{fs}/S)_{\mr{k\acute{e}t}}$, resp. $(\mr{fs}/S)_{\mr{fl}}$, resp. $(\mr{fs}/S)_{\mr{kfl}}$) the classical \'etale site (resp. Kummer \'etale site, resp. classical flat site, resp. Kummer flat site) on $(\mr{fs}/S)$. In order to shorten formulas, we will mostly abbreviate $\fsSet$ (resp. $\fsSket$, resp. $\fsSfl$, resp. $\fsSkfl$) as $\Set$ (resp. $\Sket$, resp. $\Sfl$, resp. $\Skfl$). We refer to \cite[2.5]{ill1} for the classical \'etale site and the Kummer \'etale site, and \cite[Def. 2.3]{kat2} and \cite[\S 2.1]{niz1} for the Kummer flat site. The definition of the classical flat site is an obvious analogue of that of the classical \'etale site. Then we have two natural ``forgetful'' maps of sites:
\begin{equation}\label{eq0.1}
\varepsilon_{\mr{\acute{e}t}}:(\mr{fs}/S)_{\mr{k\acute{e}t}}\rightarrow (\mr{fs}/S)_{\mr{\acute{e}t}}
\end{equation}
and
\begin{equation}\label{eq0.2}
\varepsilon_{\mr{fl}}:(\mr{fs}/S)_{\mr{kfl}}\rightarrow (\mr{fs}/S)_{\mr{fl}} .
\end{equation}

Kato's multiplicative group (or the log multiplicative group) $\Gml$ is the sheaf on $\Set$ defined by $\Gml(U)=\Gamma(U,M^{\mr{gp}}_U)$
for any $U\in\fsS$, where $M_U$ denotes the log structure of $U$ and $M^{\mr{gp}}_U$ denotes the group envelope of $M_U$. The Kummer \'etale sheaf $\Gml$ is also a sheaf on $\Skfl$, see \cite[Cor. 2.22]{niz1} for a proof.

By convention, for any sheaf of abelian groups $F$ on $\Skfl$ and a subgroup sheaf $G$ of $F$ on $\Skfl$, we denote by $(F/G)_{\Set}$ (resp. $(F/G)_{\Sfl}$, resp. $(F/G)_{\Sket}$) the quotient sheaf on $\Set$ (resp. $\Sfl$, resp. $\Sket$), while $F/G$ denotes the quotient sheaf on $\Skfl$. We abbreviate the quotient sheaf $\Gml/\Gm$ on $\Skfl$ as $\Gmlb$.

\section{Introduction}\label{sec1}
The notion of 1-motive is introduced by Deligne in \cite{del1}. A 1-motive over a base scheme $S$ is a two term complex $M=[Y\xrightarrow{u} G]$ of group schemes over $S$ such that
\begin{enumerate}[(1)]
\item $Y$ sits in degree -1, and is \'etale locally isomorphic to a finitely generated free abelian group (we call such a group scheme $Y$ a lattice);
\item $G$ sits in degree 0, and is an extension of an abelian scheme $B$ by a torus $T$ over $S$.
\end{enumerate}
From the definition, one sees that a 1-motive over a field is a mixture of lattice, torus and abelian variety, and can be regarded as a mixed motive of weights 0, -1, and -2. The name 1-motive comes from the fact that they are the mixed motives arising from varieties of dimension less or equal to 1. For any positive integer $n$, one can associate to $M$ a finite flat group scheme 
$$T_{\Z/n\Z}(M):=H^{-1}(M\otimes^L\Z/n\Z),$$
and for a prime number $l$ we define the $l$-adic Tate module of $M$ as $$T_l(M):=\varprojlim_rT_{\Z/l^r\Z}(M).$$

Now let $R$ be a complete discrete valuation ring with fraction field $K$, residue field $k$, and a chosen uniformizer $\pi$, $S=\Spec R$, and we endow $S$ with the log structure associated to $\N\rightarrow R,1\mapsto \pi$. Let $s$ (resp. $\eta$) be the closed (resp. generic) point of $S$. We denote by $i:s\hookrightarrow S$ (resp. $j:\eta\hookrightarrow S$) the closed (resp. open) immersion of $s$ (resp. $\eta$) into $S$. We endow $s$ with the induced log structure from $S$. 

A log 1-motive in \cite[\S 4.6.1]{k-t1} is defined to be a triple $M_{\mr{log}}=(Y,G,u_K)$ with $Y$ and $G$ as in the definition of 1-motive, but $u_K$ only a homomorphism of group schemes defined over $K$. Then we get a 1-motive $M_K:=[Y_K\xrightarrow{u_K}G_K]$ over $K$ from $M_{\mr{log}}$. In \cite[Thm. 19]{b-c-c1}, the authors extend $T_{\Z/n\Z}(M_K)$ to a log finite group object in $(\mr{fin}/S)_r$ (see Definition \ref{defn5.1}) by using Kato's classification theorem for objects in $(\mr{fin}/S)_{\mr{r}}$ for an fs log scheme $S$ with its underlying scheme a noetherian strictly henselian local ring. Note that $Y_K$ and $G_K$ obviously have good reduction. 

In this paper, a log 1-motive is as defined in \cite[Def. 2.2]{k-k-n2} rather than in \cite[\S4.6.1]{k-t1}, which is the more suitable one over a general base. We are going to show that a 1-motive $M_K=[Y_K\xrightarrow{u_K}G_K]$ with both $Y_K$ and $G_K$ having good reduction, can be extended to a unique log 1-motive $M=[Y\xrightarrow{u}G_{\mr{log}}]$ over $S$, see Corollary \ref{cor3.1}. Hence a log 1-motive in the sense of \cite[\S4.6.1]{k-t1} is a log 1-motive in our sense. Taking $T_{\Z/n\Z}(M)$, we get an object of $(\mr{fin}/S)_{\mr{r}}$ with generic fiber $T_{\Z/n\Z}(M_K)$. This gives an alternative proof to \cite[Thm. 19]{b-c-c1}. Moreover, if we replace log 1-motive by k\'et log 1-motive (see Definition \ref{defn2.6}), we can generalize the result to tamely ramified strict 1-motives over $K$, see Theorem \ref{thm1.1} below. Here a 1-motive $M_K=[Y_K\xrightarrow{u_K}G_K]$ is called strict, if $G_K$ has potentially good reduction (see \cite[Def. 4.2.3]{ray2}), and a 1-motive $M_K=[Y_K\xrightarrow{u_K}G_K]$ is called tamely ramified, if $Y_K$ (resp. $G_K$) has good reduction (resp. semi-stable reduction) after a tamely ramified extension of $K$.

\begin{thm}[See also Theorem \ref{thm3.1}]\label{thm1.1}
Let $M_K=[Y_K\xrightarrow{u_K}G_K]$ be a tamely ramified strict 1-motive over $K$. Then $M_K$ extends to a unique k\'et log 1-motive $M^{\mr{log}}$ over $S$.

Moreover the association of $M^{\mr{log}}$ to $M_K$ gives rise to an equivalence
$$K\acute{e}t:\mr{TameSt\textnormal{-}1\textnormal{-}Mot}_K\to \mr{K\acute{e}tLog\textnormal{-}1\textnormal{-}Mot}_S$$
from the category of tamely ramified strict 1-motives over $K$ to the category of k\'et log 1-motives over $S$.
\end{thm}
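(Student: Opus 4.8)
The plan is to reduce the extension problem to the good-reduction case already settled in Corollary~\ref{cor3.1} by passing to a tamely ramified extension, and then to descend the resulting ordinary log $1$-motive along the associated Kummer \'etale cover of $S$. Since $M_K$ is tamely ramified there is a finite tamely ramified extension $K'/K$ over which $Y_{K'}$ acquires good reduction and $G_{K'}$ acquires semi-stable reduction; enlarging $K'$ to its Galois closure (tameness is preserved) I may assume $K'/K$ is finite Galois with group $\Gamma$. Because $M_K$ is strict, $G_K$ has potentially good reduction, and since semi-stable reduction persists under further tame extension while the toric rank of the semi-abelian reduction is insensitive to field extension once semi-stable reduction has been attained, the potential good reduction forces this toric rank to vanish. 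Hence $G_{K'}$ in fact has good reduction, so $M_{K'}=[Y_{K'}\xrightarrow{u_{K'}}G_{K'}]$ has both terms of good reduction. Writing $R'$ for the integral closure of $R$ in $K'$ and $S'=\Spec R'$ with its induced log structure, Corollary~\ref{cor3.1} then yields a unique log $1$-motive $M'=[Y'\xrightarrow{u'}G'_{\mr{log}}]$ over $S'$ extending $M_{K'}$.

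Next I would observe that the morphism $S'\to S$ is Kummer \'etale: as $K'/K$ is tamely ramified, $R'$ is obtained from $R$ by adjoining a root of the uniformizer of degree prime to the residue characteristic, which is precisely a Kummer \'etale cover for the log structure on $S$. The group $\Gamma$ acts on $S'$ over $S$, and I would transport this action to $M'$ as follows. For each $\gamma\in\Gamma$ the base change $\gamma^{*}M'$ is again a log $1$-motive over $S'$ extending $\gamma^{*}M_{K'}\cong M_{K'}$ (the latter isomorphism coming from the fact that $M_{K'}$ is pulled back from $M_K$ over $K$). The uniqueness clause of Corollary~\ref{cor3.1} therefore supplies canonical isomorphisms $\gamma^{*}M'\xrightarrow{\sim}M'$ satisfying the cocycle condition, i.e.\ descent data for the Kummer \'etale cover $S'\to S$. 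Since k\'et log $1$-motives form a stack for the Kummer \'etale topology, these descent data are effective, and I would define $M^{\mr{log}}$ to be the resulting k\'et log $1$-motive over $S$; its generic fiber recovers $M_K$ by construction.

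For uniqueness over $S$, any k\'et log $1$-motive extending $M_K$ pulls back along $S'\to S$ to a log $1$-motive over $S'$ extending $M_{K'}$, which by Corollary~\ref{cor3.1} must coincide with $M'$; k\'et descent then identifies it with $M^{\mr{log}}$. Independence of the auxiliary choice of $K'$ follows by comparing two tame Galois covers inside a common one and invoking the same uniqueness. This proves the first assertion and simultaneously defines $K\acute{e}t$ on objects; on morphisms it is induced by functoriality of Corollary~\ref{cor3.1} together with descent.

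Finally, for the equivalence I would assemble the local equivalence of Corollary~\ref{cor3.1} with descent on both sides, exhibiting restriction to the generic fibre $M^{\mr{log}}\mapsto M^{\mr{log}}_{\eta}$ as a quasi-inverse. Since $\eta$ carries the trivial log structure, $M^{\mr{log}}_{\eta}$ is an ordinary $1$-motive over $K$; that it is tamely ramified and strict is exactly the statement that $M^{\mr{log}}$ becomes, after the tame cover $S'$, an honest log $1$-motive of good reduction, whose generic fibre has potentially good reduction and acquires good/semi-stable reduction over $K'$. That restriction followed by $K\acute{e}t$ is naturally isomorphic to the identity is precisely the uniqueness of the extension, and the other composite is the identity tautologically. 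The principal obstacle is the descent step: one must verify that k\'et log $1$-motives genuinely satisfy Kummer \'etale descent, so that the $\Gamma$-equivariant $M'$ over $S'$ descends to $S$, and that this descent is compatible with the homomorphism $u$, with the log enhancement $G'\rightsquigarrow G'_{\mr{log}}$, and with the duality. This in turn rests on the effectivity of k\'et descent for the constituent k\'et abelian schemes, k\'et lattices, and the log-multiplicative part $\Gmlb$, which is where the definitions introduced above do the essential work.
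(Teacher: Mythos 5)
Your argument has a genuine circularity at its core: you invoke Corollary~\ref{cor3.1} (the good-reduction case) as an already-settled base case, but in this paper Corollary~\ref{cor3.1} is a \emph{consequence} of Theorem~\ref{thm3.1} --- its proof applies the functor $K\acute{e}t$ constructed in the proof of that very theorem, and the good-reduction case is established nowhere earlier (the paper even remarks that this statement ``must have been known to Kato'' precisely because no independent reference exists). What the citation hides is exactly the hard content of the theorem: after passing to $S'=\Spec R'$, one must extend the trivialization $u_{K'}$ --- equivalently the section $s_{K'}$ of the pulled-back Poincar\'e biextension --- from $\Spec K'$ to $S'$, and this rests on the nontrivial facts that $\Gml(S')=\Gm(\Spec K')$, that $H^1_{\mr{kfl}}(S',\Gml)=0$ (Kato's log Hilbert 90), and on the biextension formalism of Section~\ref{sec2}. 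Your proposal never engages with any of this, so the essential step is assumed rather than proven. The surrounding descent scaffolding is reasonable in spirit --- the paper's own construction of the k\'et building blocks in Subsections~\ref{subsec3.1}--\ref{subsec3.3} is itself a form of Galois/Kummer \'etale descent --- but you also leave the effectivity of descent for k\'et log 1-motives (in particular, that the descended degree-0 sheaf is again the log augmentation of an extension of a k\'et abelian scheme by a k\'et torus) as an admitted ``principal obstacle'' rather than proving it, whereas the paper avoids descending the complex altogether by assembling $(v,v^{\vee},s^{\mr{log}})$ directly over $S$.

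There is also a mathematical error in your reduction step. You argue that strictness forces ``the toric rank of the semi-abelian reduction'' to vanish and conclude that $G_{K'}$ has good reduction. But a strict 1-motive has a nontrivial torus part $T_K$ in general (e.g.\ $[0\to\Gm]$ is strict), and good reduction of $G_{K'}$ in the sense relevant here does not mean that the reduction has toric rank zero: it means that $G_{K'}$ extends to an extension of an abelian scheme by a torus over $R'$, so the toric rank of the reduction equals $\dim T_{K'}$, which is nonzero in general. The correct argument is: strictness together with tameness gives a tame Galois $K'/K$ over which the abelian quotient $B_{K'}$ has good reduction (this is where ``potentially good $+$ semi-stable $\Rightarrow$ good'' applies, to $B$ and not to $G$), $T_{K'}$ is split, and $Y_{K'}$ is constant; then the extension class of $G_{K'}$ in $\mr{Ext}^1(B_{K'},T_{K'})\cong \bigl(B_{K'}^{\vee}(K')\bigr)^{r}$, $r=\mr{rank}\,T_{K'}$, spreads out over $R'$ by the N\'eron property of the abelian scheme extending $B_{K'}^{\vee}$ (Weil--Barsotti). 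Vanishing toric rank is neither true nor needed, and as written this step of your proof would only cover the case $T_K=0$.
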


Let us make some historical review concerning Theorem \ref{thm1.1}. Without doubt, degeneration is an important topic in mathematics. As stated in the beginning of \cite{k-k-n2}, degenerating abelian varieties cannot preserve smoothness, properness, and group structure at the same time, while the theory of log abelian varieties make the impossible possible \footnote{To be more precise, one can make the impossible possible up to certain extent at least at this moment, for example over a complete discrete valuation field, one can only extend the abelian varieties with semi-stable reduction to log abelian varieties over the corresponding discrete valuation ring endowed with the  canonical log structure.} in the world of log geometry. Let us get back to the setup that $R$ is a complete discrete valuation ring with fraction field $K$ and $S:=\Spec R$ is endowed with the canonical log structure. Let $A_K$ be an abelian variety with semi-stable reduction over $K$. Following the ideas from \cite{k-k-n4} and \cite{k-k-n6}, we give a sketch of the construction of the log abelian variety over $S$ extending $A_K$ . We have the Raynaud extension of $A_K$ which is an extension $G$ of some abelian scheme $B$ by some torus $T$ over $S$, see \cite[Exp. IX, Prop. 7.5]{sga7-1}. Let $A_K^\vee$ be the dual abelian variety of $A_K$. Then $A_K^\vee$ has semi-stable reduction too by \cite[Exp. IX, Rmk. 3.5.1]{sga7-1}. Let $G^\vee$ be the Raynaud extension of $A_K^\vee$ which is an extension of the dual $B^\vee$ of $B$ by a torus $T^\vee$ by \cite[Exp. IX, Prop. 7.5]{sga7-1}. Let $Y$ be the character group of $T^\vee$. Then the extension $G^\vee$ of $B^\vee$ by $T^\vee$ corresponds to a homomorphism $v:Y\to (B^\vee)^{\vee}=B$. Let $Y_K:=Y\times_S\Spec K$ and $G_K:=G\times_S\Spec K$. Then there exists a homomorphism $u_K:Y_K\to G_K$ of group schemes over $K$ lifting $v_K:=v\times_S\Spec K$, such that the rigid analytic space of $A_K$ is the quotient of the rigid analytic space of $G_K$ by $Y_K$, see \cite[Exp. IX, \S14.1]{sga7-1}. So we get a 1-motive $M_K=[Y_K\to G_K]$ in which both $Y_K$ and $G_K$ have good reduction (or a log 1-motive in the sense of \cite[\S4.6.1]{k-t1}). The 1-motive $M_K$ extends to a log 1-motive $M^{\mr{log}}=[Y\to G]$ over $S$. Let $\pi$ be a chosen uniformizer of $R$, $S_n:=\Spec R/(\pi)^{n+1}$ endowed with induced log structure from $S$, and $M^{\mr{log}}_n:=M^{\mr{log}}\times_SS_n$. Any polarization $\lambda_K$ of $A_K$ gives rise to a polarization $\lambda_n$ of $M^{\mr{log}}_n$. The polarizations $\lambda_n$ are compatible with each other as $n$ varies. Let $A_n$ be the log abelian variety over $S_n$ associated to the polarized log 1-motive $M^{\mr{log}}_n$. Then we get a polarizable $\pi$-adic formal log abelian variety $\mc{A}$ over $R$, i.e. an object of the category $\mc{P}$ from \cite[\S5.1]{k-k-n6}. Through the equivalence of categories from \cite[Thm. 6.1]{k-k-n6}, we get a log abelian variety $A$ over $S$ whose formal completion is $\mc{A}$. To sum up, we have the following associations
\begin{equation}\label{eq1.1}
A_K\mapsto M_K\mapsto M^{\mr{log}}\mapsto \{M^{\mr{log}}_n\}_n\mapsto \mc{A}\mapsto A.
\end{equation}
Our Theorem \ref{thm1.1} is a generalization of the association $M_K\mapsto M^{\mr{log}}$ to tamely ramified strict 1-motives over $K$. 

In the association chain (\ref{eq1.1}), instead of starting with a semi-stable abelian variety over $K$, we can start with a tamely ramified abelian variety over $K$. Then using Theorem \ref{thm1.1}, we can give a proof to the following theorem (see also Theorem \ref{thm5.2}) which is stated in the preprint \cite[\S 4.3]{kat4} without proof. 

\begin{thm}[Kato]\label{thm1.2}
Let $K$ be a complete discrete valuation field with ring of integers $R$, $p$ a prime number, and $A_K$ a tamely ramified abelian variety over $K$. We endow $S:=\Spec R$ with the canonical log structure. Then the $p$-divisible group $A_K[p^{\infty}]$ of $A_K$ extends to a k\'et log $p$-divisible group, i.e. an object of $(\text{$p$-div}/S)^{\mr{log}}_{\mr{\acute{e}}}$ (see Definition \ref{defn5.2}). It extends to an object of $(\text{$p$-div}/S)^{\mr{log}}_{\mr{d}}$ (see Definition \ref{defn5.2}) if any of the following two conditions is satisfied.
\begin{enumerate}[(1)]
\item $A_K$ has semi-stable reduction.
\item $p$ is invertible in $R$.
\end{enumerate} 
\end{thm}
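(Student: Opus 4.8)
The plan is to imitate the first two arrows of the association chain (\ref{eq1.1}), but feeding a tamely ramified abelian variety through Theorem \ref{thm1.1} in place of the semi-stable construction, and then passing to $p$-divisible groups.

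First I would attach a $1$-motive to $A_K$. Since $A_K$ is tamely ramified, there is a finite tamely ramified extension $K'/K$ for which $A_{K'}$ has semi-stable reduction; over $K'$ the Raynaud uniformization \cite[Exp. IX, \S 14.1]{sga7-1} produces a $1$-motive $M_{K'}=[Y_{K'}\to G_{K'}]$ with $Y_{K'}$ and $G_{K'}$ both of good reduction, whose rigid-analytic quotient $G_{K'}^{\mr{an}}/Y_{K'}$ is $A_{K'}^{\mr{an}}$. The construction being functorial and $\mr{Gal}(K'/K)$-equivariant, the Galois descent datum on $A_{K'}$ transports to one on $M_{K'}$ and descends it to a $1$-motive $M_K=[Y_K\to G_K]$ over $K$. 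By construction both $Y_K$ and $G_K$ acquire good reduction after the tame base change to $K'$ (so in particular $G_K$ is semi-stable there and has potentially good reduction), whence $M_K$ is a tamely ramified strict $1$-motive in the sense of the Introduction. Comparing realizations, there is a canonical isomorphism $M_K[p^\infty]\cong A_K[p^\infty]$, since the $p$-power torsion of the $1$-motive attached to an abelian degeneration recovers that of the abelian variety.

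Next I would apply Theorem \ref{thm1.1} to obtain the unique k\'et log $1$-motive $M^{\mr{log}}$ over $S$ extending $M_K$, and then form its associated k\'et log $p$-divisible group by taking the inductive system built from the $p$-power torsion objects $T_{\Z/p^r\Z}(M^{\mr{log}})$. Its generic fiber is $M_K[p^\infty]\cong A_K[p^\infty]$, so this is an extension of $A_K[p^\infty]$ to an object of $(\text{$p$-div}/S)^{\mr{log}}_{\mr{\acute{e}}}$ (Definition \ref{defn5.2}), which is the first assertion; uniqueness is inherited from the uniqueness in Theorem \ref{thm1.1} via the functoriality of the passage to $p$-divisible groups.

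It remains to place the extension in the finer category $(\text{$p$-div}/S)^{\mr{log}}_{\mr{d}}$ under either hypothesis. If $A_K$ has semi-stable reduction, no tame base change is needed: $Y_K$ and $G_K$ already have good reduction, so $M^{\mr{log}}$ is an honest log $1$-motive in the sense of \cite[Def. 2.2]{k-k-n2} (cf. Corollary \ref{cor3.1}) rather than merely a k\'et one, and its $p$-divisible object then lies in $(\text{$p$-div}/S)^{\mr{log}}_{\mr{d}}$; this recovers and refines \cite[Thm. 19]{b-c-c1}. If instead $p$ is invertible in $R$, then $A_K[p^\infty]$ is \'etale and its ramification is tame, hence captured entirely by the Kummer structure, which again forces membership in $(\text{$p$-div}/S)^{\mr{log}}_{\mr{d}}$. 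I expect the main obstacle to be exactly this last identification: one must pin down the precise condition (a flat-type descent or classification property) separating $(\text{$p$-div}/S)^{\mr{log}}_{\mr{d}}$ from $(\text{$p$-div}/S)^{\mr{log}}_{\mr{\acute{e}}}$, and verify that the two hypotheses are what guarantee it, while the general k\'et case only yields the coarser $(\text{$p$-div}/S)^{\mr{log}}_{\mr{\acute{e}}}$.
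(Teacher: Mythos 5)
Your overall strategy coincides with the paper's proof of Theorem \ref{thm5.2}: attach to $A_K$ a tamely ramified strict 1-motive $M_K$ with $M_K[p^{\infty}]=A_K[p^{\infty}]$, extend it to a k\'et log 1-motive $M^{\mr{log}}$ by Theorem \ref{thm3.1}, and take $M^{\mr{log}}[p^{\infty}]=\bigcup_n T_{\Z/p^n\Z}(M^{\mr{log}})$, which lies in $(\text{$p$-div}/S)^{\mr{log}}_{\mr{\acute{e}}}$ by Theorem \ref{thm5.1}. The only difference in the first step is that the paper simply cites Raynaud \cite[\S 4.2]{ray2} for the existence of such an $M_K$, whereas you re-sketch its construction (uniformization over a tame extension $K'$ plus Galois descent); that is acceptable in principle, though the effectivity of the descent and the $\Gamma$-equivariance of the uniformization would need to be written out. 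Your treatment of condition (1) also matches the paper: by Corollary \ref{cor3.1} the k\'et log 1-motive is a genuine log 1-motive, and its $p^n$-torsion lies in $(\mr{fin}/S)_{\mr{d}}$ --- in the paper this last point is part (4) of the proposition in Section \ref{sec5}, proved via the duality of log 1-motives and Kato's closure-under-extensions result (Proposition \ref{prop5.1}); you should cite or prove it rather than assert it. (A side remark: the uniqueness you claim for the extended $p$-divisible group does not follow from the uniqueness in Theorem \ref{thm3.1}, since an extension of $A_K[p^{\infty}]$ need not a priori arise from a k\'et log 1-motive; but Theorem \ref{thm1.2} claims no uniqueness, so nothing is lost.)

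The genuine gap is condition (2). Saying that $A_K[p^{\infty}]$ is \'etale over $K$ and tamely ramified, hence ``captured entirely by the Kummer structure,'' is not an argument: membership in $(\text{$p$-div}/S)^{\mr{log}}_{\mr{d}}$ requires that each $T_{\Z/p^n\Z}(M^{\mr{log}})$ --- a priori only an object of $(\mr{fin}/S)_{\mr{\acute{e}}}$, i.e.\ representable after a Kummer \'etale cover --- be representable by a log scheme over $S$ itself, and that the same hold for its Cartier dual (Definition \ref{defn5.1}). You correctly identify this as the missing point, but you leave it open. The paper closes it with a single input: Kato's classification result \cite[Prop. 2.1]{kat4}, which says precisely that an object of $(\mr{fin}/S)_{\mr{\acute{e}}}$ whose rank is invertible on $S$ automatically lies in $(\mr{fin}/S)_{\mr{d}}$. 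Applied to $T_{\Z/p^n\Z}(M^{\mr{log}})$, whose rank is a power of $p$, this finishes case (2) for every $n$ at once. Without invoking (or reproving) that classification, your proposal establishes only the first assertion and case (1).
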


In the association chain (\ref{eq1.1}), starting with a tamely ramified abelian variety also brings us the question if one can formulate the theory of log abelian varieties in the Kummer \'etale topology instead of the classical \'etale topology, with which one could get a complete association chain as in (\ref{eq1.1}) in the new case. A  second natural question is if one can go further to define log abelian varieties in the even finer topology the Kummer flat topology\footnote{The theory of log abelian varieties in the Kummer flat topology has been expected by Kazuya Kato as Professor Chikara Nakayama informed the author.}. A third question, as suggested by the anonymous referee, is if one can extend a log 1-motive $M_K$ over an fs log point with underlying scheme $\Spec K$ to a log 1-motive over $\Spec R$ endowed with the direct image log structure along $\Spec K\to\Spec R$. We hope to come back to these questions in future.

In Section \ref{sec2}, we define k\'et tori, k\'et lattices, k\'et abelian schemes, k\'et 1-motives, and k\'et log 1-motives, and formulate duality theory for these objects.

Section \ref{sec3} is devoted to the proof of Theorem \ref{thm1.1}. A special case of Theorem \ref{thm1.1} is the following theorem, which gives rise to a concrete non-trivial example of k\'et abelian scheme.

\begin{thm}[See also Theorem \ref{thm3.2}]\label{thm1.3}
Let $K$ be a complete discrete valuation field with ring of integers $R$, and $B_K$ a tamely ramified abelian variety over $K$ which has potentially good reduction. We endow $S:=\Spec R$ with the canonical log structure, then $B_K$ extends to a unique k\'et abelian scheme $B$ over $S$. 
\end{thm}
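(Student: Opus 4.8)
The plan is to exploit the fact that, for the canonical log structure on $S$, a tamely ramified extension of $K$ becomes a Kummer étale cover of $S$, so that the obstruction to good reduction can be trivialized Kummer-étale-locally and then removed by descent; classical descent along the ramified extension cannot possibly produce an abelian scheme over $R$, but Kummer étale descent produces exactly a k\'et abelian scheme.

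First I would choose the cover. Since $B_K$ is tamely ramified with potentially good reduction, by hypothesis there is a finite tamely ramified extension $K'/K$ over which $B_K$ acquires good reduction; replacing $K'$ by its Galois closure, which remains tame, I may assume $K'/K$ is finite Galois with group $G=\mathrm{Gal}(K'/K)$. Let $R'$ be the ring of integers of $K'$ and $S'=\Spec R'$ with its canonical log structure. The key geometric input is that $S'\to S$ is a Kummer étale Galois cover with group $G$: the ramification index is prime to the residue characteristic and the residue extension is separable, so by Abhyankar's lemma $S'\to S$ is log étale and Kummer, hence Kummer étale, and the fs fiber product satisfies $S'\times_S^{\mr{fs}}S'\cong\coprod_{\sigma\in G}S'$. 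In particular $S'\to S$ is a Galois covering in $\Sket$ with group $G$.

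Next, by good reduction $B_{K'}$ extends to an abelian scheme $\mc B'$ over $R'$, which I regard as a k\'et abelian scheme over $S'$ via the pullback log structure. The generic point $\eta'=\Spec K'$ identifies the generic fiber of $\mc B'$ with $B_{K'}=B_K\times_K K'$, which carries the canonical descent datum: for each $\sigma\in G$ the $K$-structure of $B_K$ gives an isomorphism $\psi_\sigma\colon\sigma^\ast B_{K'}\xrightarrow{\sim}B_{K'}$, and these satisfy the cocycle identity. Since $R'$ is normal and $\mc B'$ is an abelian scheme, each $\psi_\sigma$ extends uniquely to an isomorphism $\tilde\psi_\sigma\colon\sigma^\ast\mc B'\xrightarrow{\sim}\mc B'$ over $R'$ by the Néron mapping property, and uniqueness forces the cocycle relation $\tilde\psi_{\sigma\tau}=\tilde\psi_\sigma\circ\sigma^\ast\tilde\psi_\tau$, since both sides extend $\psi_{\sigma\tau}$. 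Thus $(\mc B',\{\tilde\psi_\sigma\}_{\sigma\in G})$ is a descent datum for the Kummer étale cover $S'\to S$. Because sheaves on $\Sket$ satisfy descent, this datum glues to a sheaf $B$ on $\Sket$ with $B\times_S S'\cong\mc B'$, and by the definition of k\'et abelian scheme in Section~\ref{sec2} this exhibits $B$ as a k\'et abelian scheme over $S$. Restricting to $\eta$, where the log structure is trivial and the cover reduces to the ordinary tame extension, $\{\tilde\psi_\sigma\}$ restricts to $\{\psi_\sigma\}$, so $B\times_S\eta\cong B_K$ and $B$ extends $B_K$. For uniqueness I would show that the generic fiber functor from k\'et abelian schemes over $S$ to abelian varieties over $K$ is fully faithful: any two extensions become abelian schemes over a common Kummer étale cover, where by the Néron mapping property a homomorphism of abelian schemes is determined by its generic fiber, so the given identification $B_{1,K}\cong B_K\cong B_{2,K}$ extends to an isomorphism over the cover compatible with descent data and descends to $B_1\cong B_2$.

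The main obstacle is the verification that tame ramification translates exactly into a Kummer étale Galois cover with the expected fs fiber product $\coprod_{\sigma\in G}S'$, since it is this log-geometric reinterpretation that makes the descent possible at all; the accompanying point that the cocycle conditions survive the extension is comparatively formal once one invokes the uniqueness built into the Néron mapping property.
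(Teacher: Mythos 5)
Your proposal is correct and takes essentially the same approach as the paper: pass to a tame Galois extension $K'/K$ over which $B_K$ acquires good reduction, observe that $S'=\Spec R'$ with its canonical log structure is a Kummer \'etale Galois cover of $S$ (with $\Gamma\times S'\cong S'\times_S S'$), extend the Galois action/descent datum to the N\'eron model $B'$ over $R'$ by the N\'eron mapping property, descend as a sheaf on the Kummer \'etale site to get the k\'et abelian scheme, and prove uniqueness via the N\'eron property over a common cover, exactly as in Theorem \ref{thm3.2}(1). The only difference is packaging: the paper realizes the descent as the quotient sheaf of the equivalence relation $(\tilde{\rho},\mr{pr}_2):\Gamma\times B'\to B'\times_SB'$ following the Stacks project, whereas you glue a Galois descent datum $\{\tilde\psi_\sigma\}$, which is the same construction in different language.
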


In Section \ref{sec4}, for a  tamely ramified strict 1-motive $M_K$ as in Theorem \ref{thm1.1}, we associate a logarithmic monodromy pairing to $M$ and compare it with Raynaud's geometric monodromy for $M_K$ (see \cite[\S4.3]{ray2}).

In Section \ref{sec5}, we present a proof to Theorem \ref{thm1.2}.

\section{K\'et log 1-motives}\label{sec2}
\subsection{K\'et log 1-motives}
The following definitions about log 1-motives are taken from \cite[\S 2]{k-k-n2}.
\begin{defn}
Let $S$ be an fs log scheme, $T$ a torus over the underlying scheme of $S$ with character group $X$. The \textbf{log augmentation of $T$}, denoted by $T_{\mr{log}}$, is the sheaf of abelian groups
$$\mc{H}om_{S_{\mr{\acute{e}t}}}(X,\Gml)$$
on $(\mr{fs}/S)_{\mr{\acute{e}t}}$.
Let $G$ be an extension of an abelian scheme $B$ by $T$ over the underlying scheme of $S$. The \textbf{logarithmic augmentation of $G$}, denoted by $G_{\mr{log}}$, is the push-out of $G$ along the inclusion $T\hookrightarrow T_{\mr{log}}$.
\end{defn}

\begin{defn}
A \textbf{log 1-motive} over an fs log scheme $S$ is a two-term complex $M=[Y\xrightarrow{u}G_{\mr{log}}]$ in the category of sheaves of abelian groups on $(\mr{fs}/S)_{\mr{\acute{e}t}}$, with the degree $-1$ term $Y$ an \'etale locally constant sheaf of finitely generated free abelian groups and the degree 0 term $G_{\mr{log}}$ as above. We also call $Y$ the \textbf{lattice part} of $M$. A \textbf{morphism of log 1-motives} is just a homomorphism of complexes.
\end{defn}

By \cite[Prop. 2.1]{zha3}, one can replace $(\mr{fs}/S)_{\mr{\acute{e}t}}$ by $(\mr{fs}/S)_{\mr{k\acute{e}t}}$ in the above definitions. In particular, $T_{\mr{log}}$ and $G_{\mr{log}}$ are sheaves on $(\mr{fs}/S)_{\mr{k\acute{e}t}}$.

Now we define k\'et 1-motives and k\'et log 1-motives, and we work with $(\mr{fs}/S)_{\mr{k\acute{e}t}}$.

\begin{defn}
A \textbf{k\'et (kummer \'etale) lattice} (resp. \textbf{k\'et torus}, resp. \textbf{k\'et abelian scheme}) over an fs log scheme $S$ is a sheaf $F$ of abelian groups on $(\mr{fs}/S)_{\mr{k\acute{e}t}}$ such that the pull-back of $F$ to $S'$ is a lattice (resp. torus, resp. abelian scheme) over $S'$ for some Kummer \'etale cover $S'$ of $S$. Here by a lattice, we mean a group scheme which is \'etale locally representable by a finite rank free abelian group.
\end{defn}

\begin{defn}
Let $S$ be an fs log scheme. A \textbf{k\'et 1-motive} over $S$ is a two-term complex $M=[Y\xrightarrow{u}G]$ in the category of sheaves of abelian groups on $(\mr{fs}/S)_{\mr{k\acute{e}t}}$, such that the degree $-1$ term $Y$ is a k\'et lattice and the degree 0 term $G$ is an extension of a k\'et abelian scheme $B$ by a k\'et torus $T$ on $(\mr{fs}/S)_{\mr{k\acute{e}t}}$.  A \textbf{morphism of k\'et 1-motives} is just a homomorphism of complexes.
\end{defn}

\begin{lem}\label{lem2.1}
Let $S$ be an fs log scheme. Then the associations 
$$T\mapsto\mc{H}om_{S_{\mr{k\acute{e}t}}}(T,\Gm),\quad X\mapsto \mc{H}om_{S_{\mr{k\acute{e}t}}}(X,\Gm)$$
define an equivalence between the category of k\'et tori over $S$ and the category of k\'et locally constant sheaves of finitely generated free abelian groups over $S$, and the equivalence restricts to the classical equivalence between the category of classical tori over $S$ and the category of \'etale locally constant sheaves of finitely generated free abelian groups over $S$. We call the k\'et lattice $\mc{H}om_{S_{\mr{k\acute{e}t}}}(T,\Gm)$ the \textbf{character group} of the k\'et torus $T$.
\end{lem}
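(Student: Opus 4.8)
The plan is to establish the equivalence by reducing to the classical statement via Kummer étale descent. First I would recall the classical anti-equivalence between tori over a scheme and étale-locally-constant lattices, given by $T\mapsto\mc{H}om(T,\Gm)$ with quasi-inverse $X\mapsto\mc{H}om(X,\Gm)$; this is the statement we want to upgrade. For a k\'et torus $T$ on $\Sket$, by definition there is a Kummer \'etale cover $f:S'\to S$ such that $f^*T$ is a classical torus over $S'$, and similarly a k\'et lattice is k\'et-locally a classical lattice. The key point is that both $\mc{H}om_{S_{\mr{k\acute{e}t}}}(-,\Gm)$ functors commute with Kummer \'etale pullback: since $\Gm$ is a sheaf on $\Skfl$ (hence on $\Sket$) whose formation is compatible with the forgetful maps, and since $\mc{H}om$ of sheaves localizes, we have $f^*\mc{H}om_{S_{\mr{k\acute{e}t}}}(T,\Gm)\cong\mc{H}om_{S'_{\mr{k\acute{e}t}}}(f^*T,\Gm)$, and likewise for a lattice $X$.

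With this compatibility in hand, I would argue as follows. Given a k\'et torus $T$, choose $S'$ over which $T$ becomes a classical torus $T'$; then $\mc{H}om_{S_{\mr{k\acute{e}t}}}(T,\Gm)$ pulls back on $S'$ to $\mc{H}om_{S'_{\mr{k\acute{e}t}}}(T',\Gm)$, which by the classical equivalence (valid because $T'$ is a genuine torus and the k\'et $\mc{H}om$ agrees with the \'etale one on classical tori) is the character lattice of $T'$, in particular an \'etale-locally-constant lattice over $S'$. Hence $\mc{H}om_{S_{\mr{k\acute{e}t}}}(T,\Gm)$ is k\'et-locally a classical lattice, i.e. a k\'et lattice; the dual argument shows $\mc{H}om_{S_{\mr{k\acute{e}t}}}(X,\Gm)$ is a k\'et torus. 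The two functors are then quasi-inverse because the biduality isomorphisms $T\xrightarrow{\sim}\mc{H}om_{S_{\mr{k\acute{e}t}}}(\mc{H}om_{S_{\mr{k\acute{e}t}}}(T,\Gm),\Gm)$ and its counterpart for lattices can be checked after pulling back to $S'$, where they reduce to the classical biduality; since both sides are sheaves on $\Sket$ and Kummer \'etale descent is effective, an isomorphism verified on the cover that is compatible with descent data glues to the required isomorphism on $S$.

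The statement that the equivalence restricts to the classical one is immediate once one knows the k\'et $\mc{H}om$ and the \'etale $\mc{H}om$ agree on classical tori and classical lattices. For this I would invoke the fact that for a representable sheaf $T$ with target $\Gm$, the internal $\mc{H}om$ computed on $\Sket$ restricts along $\varepsilon_{\mr{\acute{e}t}}$ to the one on $\Set$, since $\Gm$ carries no extra Kummer \'etale sections beyond its \'etale ones and a morphism of classical tori is the same data in either topology; concretely, $\varepsilon_{\mr{\acute{e}t}*}$ of the k\'et $\mc{H}om$ recovers the classical $\mc{H}om$ because both represent the same functor of homomorphisms of group sheaves.

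The main obstacle I anticipate is verifying the compatibility of internal $\mc{H}om$ with Kummer \'etale pullback and the effectivity of descent for these sheaves; unlike the classical \'etale situation, one must be careful that $\mc{H}om_{S_{\mr{k\acute{e}t}}}$ is formed in the Kummer \'etale topos and that the descent data produced on $S'$ genuinely glue. I expect this to follow from the general formalism of the topos $\Sket$ together with the fact that classical tori and lattices, being representable by affine respectively \'etale schemes, descend along Kummer \'etale covers; but checking that the isomorphisms of the previous paragraph are compatible with the cocycle condition on $S'\times_S S'$ is the delicate part, and I would spell out the two-cocycle compatibility there rather than anywhere else.
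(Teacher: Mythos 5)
Your proposal is correct and is essentially the paper's proof: the paper disposes of this lemma in one line, citing exactly the reduction to the classical equivalence between tori and \'etale locally constant lattices that you carry out in detail via Kummer \'etale localization of the internal $\mc{H}om$. The only remark worth making is that your final worry about cocycle compatibility on $S'\times_S S'$ is unnecessary: the biduality map $T\to\mc{H}om_{S_{\mr{k\acute{e}t}}}(\mc{H}om_{S_{\mr{k\acute{e}t}}}(T,\Gm),\Gm)$ is already canonically defined over $S$, and being an isomorphism is a Kummer \'etale local property of a morphism of sheaves, so no gluing of descent data is ever required.
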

\begin{proof}
This follows from the classical equivalence between the category of tori and the category of \'etale locally constant sheaves of finitely generated free abelian groups.
\end{proof}

\begin{defn}\label{defn2.5}
Given a k\'et torus $T$ over $S$, let $X:=\mc{H}om_{S_{\mr{k\acute{e}t}}}(T,\Gm)$ be the character group of $T$. The \textbf{logarithmic augmentation of $T$}, denoted by $T_{\mr{log}}$, is the sheaf of abelian groups
$$\mc{H}om_{S_{\mr{k\acute{e}t}}}(X,\Gml)$$
on $(\mr{fs}/S)_{\mr{k\acute{e}t}}$.
Let $G$ be an extension of a k\'et abelian scheme $B$ by $T$ over $S$. The \textbf{logarithmic augmentation of $G$}, denoted by $G_{\mr{log}}$, is the push-out of $G$ along the inclusion $T\hookrightarrow T_{\mr{log}}$. 
\end{defn}

Note that the quotient $(G_{\mr{log}}/G)_{S_{\mr{k\acute{e}t}}}$ is canonically identified with the quotient $(T_{\mr{log}}/T)_{S_{\mr{k\acute{e}t}}}$, which can be further identified with $\mc{H}om_{S_{\mr{k\acute{e}t}}}(X,(\Gml/\Gm)_{S_{\mr{k\acute{e}t}}})$.

\begin{defn}\label{defn2.6}
A \textbf{k\'et log 1-motive} over an fs log scheme $S$ is a 2-term complex $M=[Y\xrightarrow{u} G_{\mr{log}}]$ of sheaves of abelian groups on $(\mr{fs}/S)_{\mr{k\acute{e}t}}$ such that the degree -1 term $Y$ is a k\'et lattice over $S$ and $G$ is an extension of a k\'et abelian scheme $B$ by a k\'et torus on $(\mr{fs}/S)_{\mr{k\acute{e}t}}$. The composition $$Y\xrightarrow{u} G_{\mr{log}}\rightarrow (G_{\mr{log}}/G)_{S_{\mr{k\acute{e}t}}}=(T_{\mr{log}}/T)_{S_{\mr{k\acute{e}t}}}=\mc{H}om_{S_{\mr{k\acute{e}t}}}(X,(\Gml/\Gm)_{S_{\mr{k\acute{e}t}}})$$
corresponds to a pairing
$$Y\times X\rightarrow (\Gml/\Gm)_{S_{\mr{k\acute{e}t}}}.$$
We call this pairing the \textbf{monodromy pairing} of $M$. A \textbf{morphism of k\'et log 1-motives} is just a homomorphism of complexes.
\end{defn}

\begin{prop}\label{prop2.1}
Let $G$ be an extension of a k\'et abelian scheme $B$ by a k\'et torus $T$ over an fs log scheme $S$. Then $G$ is Kummer \'etale locally an extension of an abelian scheme by a torus for the classical \'etale topology.
\end{prop}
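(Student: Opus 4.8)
The plan is to first descend the problem to the case where $T$ and $B$ are honest classical objects over the base, and then to recognize the remaining Kummer \'etale extension as the pullback, along the forgetful map of sites $\varepsilon_{\mr{\acute{e}t}}$ of \eqref{eq0.1}, of a classical extension. The entire difficulty will be concentrated in the vanishing of a single group of homomorphisms.

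\emph{Reduction.} By the definitions of k\'et torus and k\'et abelian scheme there is a Kummer \'etale cover $S'\to S$ over which $T$ becomes a torus and $B$ becomes an abelian scheme. Refining $S'$ by a classical \'etale cover, which is again a Kummer \'etale cover, I may in addition assume that the character group of $T$ is constant, so that $T\cong\Gm^r$ over $S'$. Pulling the extension $0\to T\to G\to B\to 0$ back to $(\mr{fs}/S')_{\mr{k\acute{e}t}}$ preserves exactness, so after replacing $S$ by $S'$ it suffices to prove that an extension of sheaves $0\to T\to G\to B\to 0$ on $\Sket$, with $T\cong\Gm^r$ and $B$ an abelian scheme, is isomorphic to $\varepsilon_{\mr{\acute{e}t}}^*$ of an extension of $B$ by $T$ on $\Set$; such a classical extension is representable by a semi-abelian scheme, which is exactly the shape we want.

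\emph{Main construction.} I would apply $R\varepsilon_{\mr{\acute{e}t}*}$ to the short exact sequence. Since $T$ and $B$ are representable, the adjunction gives $\varepsilon_{\mr{\acute{e}t}*}T=T$ and $\varepsilon_{\mr{\acute{e}t}*}B=B$, so the long exact sequence reads
\[
0\to T\to \varepsilon_{\mr{\acute{e}t}*}G\to B\xrightarrow{\ \partial\ } R^1\varepsilon_{\mr{\acute{e}t}*}T .
\]
The connecting map $\partial$ is an element of $\mr{Hom}_{\Set}(B,R^1\varepsilon_{\mr{\acute{e}t}*}T)$. If $\partial=0$, then $0\to T\to\varepsilon_{\mr{\acute{e}t}*}G\to B\to 0$ is a classical extension, and applying the exact functor $\varepsilon_{\mr{\acute{e}t}}^*$ together with the counit $\varepsilon_{\mr{\acute{e}t}}^*\varepsilon_{\mr{\acute{e}t}*}G\to G$, which restricts to the identity on the sub $T$ and on the quotient $B$ and is therefore an isomorphism by the five lemma, identifies $G$ with the pullback of this classical extension. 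Everything thus comes down to showing $\partial=0$.

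\emph{The crux.} It remains to prove the vanishing $\mc{H}om_{\Set}(B,R^1\varepsilon_{\mr{\acute{e}t}*}\Gm)=0$, since $R^1\varepsilon_{\mr{\acute{e}t}*}T=(R^1\varepsilon_{\mr{\acute{e}t}*}\Gm)^r$ by the splitting of $T$. Here I invoke the computation of $R^1\varepsilon_{\mr{\acute{e}t}*}\Gm$ coming from log Kummer theory (Kato--Nakayama, see also \cite{niz1}): it is a torsion sheaf all of whose torsion orders are invertible on $S$, because Kummer \'etale covers introduce only roots of order prime to the residue characteristics. On the other hand $B$, being an abelian scheme, is divisible by every integer $n$ invertible on $S$ as a sheaf on $\Set$, multiplication by such an $n$ being finite \'etale surjective. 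Any homomorphism from an $n$-divisible sheaf to an $n$-torsion sheaf is zero, and passing to the filtered colimit over $n$, using that the representable $B$ is a quasi-compact object of the topos so that $\mc{H}om$ commutes with this colimit, yields the desired vanishing. Pinning down the precise torsion and tameness structure of $R^1\varepsilon_{\mr{\acute{e}t}*}\Gm$ and matching it against the divisibility of $B$ is the only genuine obstacle; the rest of the argument is formal.
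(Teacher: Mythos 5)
Your reduction and your identification of the crux coincide with the paper's own proof: the paper also assumes $B$ and $T$ classical over $S$, uses the low-degree exact sequence of the spectral sequence $\mr{Ext}^{i}_{\Set}(B,R^j\varepsilon_* T)\Rightarrow \mr{Ext}^{i+j}_{\Sket}(B,T)$ (your ``push forward the extension and kill the connecting map'' argument is the same content), and reduces everything to $\mr{Hom}_{\Set}(B,R^1\varepsilon_{\mr{\acute{e}t}*}\Gm)=0$. The gap is in your proof of that vanishing. Your structural claim about $R^1\varepsilon_{\mr{\acute{e}t}*}\Gm$ is not correct for a general fs log scheme $S$: its torsion is prime to the residue characteristic \emph{stalk by stalk} (the paper's formula over a log point $U$ of characteristic $p$ is $(\Gml/\Gm)_{U_{\mr{\acute{e}t}}}\otimes_\Z(\Q/\Z)'$), which is weaker than ``all torsion orders are invertible on $S$''. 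For example, if $S=\Spec\Z$ with log structure supported at $(2)$, the sheaf has sections of order $\ell$ for every prime $\ell\neq 2$, and no neighbourhood of $(2)$ inverts all such $\ell$. Consequently your filtered colimit over ``$n$ invertible on $S$'' does not exhaust $R^1\varepsilon_{\mr{\acute{e}t}*}\Gm$, and a given homomorphism need not factor through $G[n]$ for any invertible $n$. This matters because the fallback principle ``divisible source, torsion target, hence zero'' is false without such a factorization: $\mr{Hom}(\Q,\Q/\Z)\neq0$, and likewise abstract (stalk-wise) homomorphisms from an abelian scheme's points to a prime-to-$p$ torsion group abound; divisibility alone can never close the argument. (A secondary, repairable, point: commuting $\mc{H}om$ with the colimit is a statement about sections over quasi-compact quasi-separated objects, not a consequence of quasi-compactness of $B$ alone, and $B$ is qcqs only when $S$ is; one must first localize.)

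Your approach can be repaired, and in the paper's actual application ($S=\Spec R$ local, so that ``prime to the residue characteristic'' does imply ``invertible on $S$'') it is already correct as written. In general one can argue prime by prime: over a qcqs base the homomorphism factors through some $G[n]$; splitting $n$ into prime powers, the $\ell$-primary component of $R^1\varepsilon_{\mr{\acute{e}t}*}\Gm$ has vanishing stalks at points of residue characteristic $\ell$, while over the open locus where $\ell$ is invertible your divisibility argument applies; since a map of \'etale sheaves with vanishing stalks is zero, each component vanishes. The paper avoids this bookkeeping by a different mechanism: following \cite[Lem. 6.1.1]{k-k-n2} it reduces to the case where the test object $U$ is a log point, observes that the restriction of $R^1\varepsilon_{\mr{\acute{e}t}*}\Gm$ to the strict site over $U$ is a locally constant sheaf, hence representable by an \'etale group scheme, and then invokes rigidity (a homomorphism from an abelian scheme to an \'etale group scheme over a field is trivial, by connectedness) --- an argument insensitive to which primes occur in the torsion. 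You should either import that reduction-to-log-points step or supply the prime-by-prime support argument; as written, the step you yourself flag as ``the only genuine obstacle'' is exactly where the proof is incomplete.
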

\begin{proof}
Without loss of generality, we may assume that $B$ (resp. $T$) is an abelian scheme (resp. torus) over $S$. Let $\varepsilon:(\mr{fs}/S)_{\mr{k\acute{e}t}}\rightarrow (\mr{fs}/S)_{\mr{\acute{e}t}}$ be the forgetful map between these two sites. The spectral sequence 
$$E_2^{i,j}=\mr{Ext}^{i}_{S_{\mr{\acute{e}t}}}(B,R^j\varepsilon_* T)\Rightarrow \mr{Ext}^{i+j}_{S_{\mr{k\acute{e}t}}}(B,T)$$
gives rise to an exact sequence
$$0\rightarrow \mr{Ext}^{1}_{S_{\mr{\acute{e}t}}}(B,T)\rightarrow \mr{Ext}^{1}_{S_{\mr{k\acute{e}t}}}(B,T)\rightarrow \mr{Hom}_{S_{\mr{\acute{e}t}}}(B,R^1\varepsilon_* T).$$
By this exact sequence, it suffices to show that $\mr{Hom}_{S_{\mr{\acute{e}t}}}(B,R^1\varepsilon_* T)=0$. We may assume that $T=\Gm$, then we are reduced to show that
$\mr{Hom}_{S_{\mr{\acute{e}t}}}(B,R^1\varepsilon_* \Gm)$ vanishes. The proof of the vanishing is an adoption of the proof of \cite[Lem. 6.1.1]{k-k-n2} in our situation. Let $\varphi\in \mr{Hom}_{S_{\mr{\acute{e}t}}}(B,R^1\varepsilon_* \Gm)$, and $U$ any object of $(\mr{fs}/S)$, we show that the map $B(U)\to R^1\varepsilon_* \Gm(U)$ induced by $\varphi$ is trivial. By the same argument as in the proof of \cite[Lem. 6.1.1]{k-k-n2}, we are reduced to the case that $U$ is a log point, i.e. its underlying scheme is the spectrum of a field $K$. Let $p$ be the characteristic of $K$. Then we have 
$$R^1\varepsilon_*\Gm\cong(\Gml/\Gm)_{U_{\mr{\acute{e}t}}}\otimes_\Z(\Q/\Z)'$$
over $U$, where 
$$(\Q/\Z)':=\begin{cases}\varinjlim_{(n,p)=1}\frac{1}{n}\Z/\Z, &\text{ if $p>1$;} \\
\Q/\Z, &\text{ if $p=0$.}
\end{cases}$$
Let $(\mr{st}/U)$ be the full subcategory of $(\mr{fs}/U)$ consisting of all objects which are strict over $U$. The restriction of $(\Gml/\Gm)_{U_{\mr{\acute{e}t}}}\otimes_\Z(\Q/\Z)'$ to $(\mr{st}/U)$ is a locally constant sheaf, and hence is represented by an \'etale group scheme over $U$. Since a homomorphism of group schemes over $U$ from $B\times_SU$ to an \'etale group scheme is trivial, the restriction of $\varphi$ to $(\mr{st}/U)$ is trivial. Hence the homomorphism $B(U)\to R^1\varepsilon_* \Gm(U)$ induced by $\varphi$ is trivial. This finishes the proof.
\end{proof}

\begin{rmk}\label{rmk2.1}
For an abelian scheme $B$ and a torus $T$ over $S$, the same argument as in the proof of Proposition \ref{prop2.1} shows that $\mr{Ext}^{1}_{S_{\mr{fl}}}(B,T)\xrightarrow{\cong} \mr{Ext}^{1}_{S_{\mr{kfl}}}(B,T)$. Furthermore, we have 
$$\mr{Ext}^{1}_{S_{\mr{k\acute{e}t}}}(B,T)\cong \mr{Ext}^{1}_{S_{\mr{\acute{e}t}}}(B,T)\cong\mr{Ext}^{1}_{S_{\mr{fl}}}(B,T)\cong \mr{Ext}^{1}_{S_{\mr{kfl}}}(B,T).$$
\end{rmk}


\subsection{K\'et log 1-motives in the Kummer flat topology}
In this subsection, we assume that the underlying scheme of the base $S$ is locally noetherian. We show that a k\'et log 1-motive can be regarded as a 2-term complex in the category of sheaves for the Kummer flat topology.

\begin{lem}\label{lem2.2}
Let $S$ be an fs log scheme, and let $F$ be a sheaf of abelian groups on $(\mr{fs}/S)_{\mr{k\acute{e}t}}$ such that $F\times_SS'$ is representable by an fs log scheme for some Kummer \'etale cover $S'$ of $S$. Then $F$ is also a sheaf for the Kummer flat topology. In particular, k\'et lattices, k\'et tori, and k\'et abelian schemes over $S$ are sheaves for the Kummer flat topology.
\end{lem}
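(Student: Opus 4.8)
The statement I need to prove is that a sheaf $F$ on $\Sket$ which becomes representable by an fs log scheme after a single Kummer \'etale base change $S'\to S$ is automatically a sheaf for the Kummer flat topology. The plan is to reduce the descent condition for the Kummer flat topology to one I already control, using the interplay between the two topologies and the representability after base change. First I would recall the key structural fact about Kummer covers: any Kummer flat cover can, Kummer \'etale locally, be refined by a composite of a Kummer \'etale cover and a (classical) flat cover, because a Kummer flat morphism factors \'etale locally as a standard Kummer covering (which is Kummer \'etale after adjoining suitable roots of unity, or rather becomes split Kummer \'etale up to a finite flat part) followed by a strict flat morphism. The strategy is therefore to check separated presheaf and gluing conditions against these two building blocks separately.

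The key steps, in order, would be as follows. First, reduce to the representable case: since sheafification and the sheaf condition are local and $F$ becomes representable after the Kummer \'etale cover $S'\to S$, and since being a Kummer flat sheaf can be checked Kummer \'etale locally on $S$ (a descent argument for the finer topology over the coarser one, using that $\Sket$ is a site refining $\Set$ but coarser than $\Skfl$), I may assume $F$ is itself representable by an fs log scheme $\mathcal{F}$ over $S$. Second, for representable $F$, invoke the fundamental descent result for the Kummer flat topology: by \cite[Thm. 2.19]{niz1} (or the analogous effective-descent statement for fs log schemes along Kummer flat covers), representable functors by fs log schemes satisfy the sheaf axiom for the Kummer flat topology. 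Concretely, I would verify the sheaf condition by checking it separately for strict flat covers, where it follows from classical fppf descent for the underlying schemes together with descent of the log structure, and for the standard Kummer coverings, where it follows from the explicit structure of such covers and the fact that $\mathcal{F}$ is fs. Third, assemble these to conclude $F$ is a Kummer flat sheaf, and specialize to lattices, tori, and abelian schemes, each of which is representable (the latter two classically, the lattice as an \'etale group scheme) and hence covered by the representable case.

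I expect the main obstacle to be the first reduction step, namely justifying that the Kummer flat sheaf property descends along the Kummer \'etale cover $S'\to S$. The subtlety is that $F$ itself is only a priori a $\Sket$-sheaf, not yet known to be a $\Skfl$-sheaf, so I cannot freely manipulate it as a flat sheaf; I must argue that, given the gluing data for a Kummer flat cover of some $U\in\fsS$, I can base change everything to $U\times_S S'$ where $F$ becomes the representable $\mathcal{F}\times_S S'$, solve the descent problem there using effective descent for fs log schemes along Kummer flat covers, and then descend the resulting section back down along the Kummer \'etale cover using that $F$ is already a $\Sket$-sheaf. The compatibility of the two descents — checking that the section produced upstairs satisfies the original Kummer \'etale gluing so that it glues to a genuine section of $F$ over $U$ — is where the care is needed, and it relies on the key input that Kummer \'etale covers are themselves Kummer flat covers, so that the two topologies are comparable and the cocycle conditions match up.
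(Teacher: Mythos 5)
Your proposal is correct and is essentially the paper's own argument: the paper formalizes exactly your two-step descent as a single diagram chase, with columns given by the Kummer \'etale sheaf sequences of $F$ for the cover $S'\to S$ (and $S'':=S'\times_SS'$), and with the second and third rows given by the Kummer flat sheaf sequences of $F\times_SS'$ and $F\times_SS''$, which are exact because these are representable by fs log schemes and hence Kummer flat sheaves by Kato's theorem (the key input you also identify), so that exactness of the first row --- the kfl sheaf condition for $F$ --- follows. One small caveat, which does not affect the argument since you ultimately just cite the descent theorem for representable functors: your opening claim that a Kummer flat cover can be refined, Kummer \'etale locally, into a Kummer \'etale cover followed by a classical flat cover is inaccurate (a standard Kummer cover of degree divisible by a residue characteristic is not Kummer \'etale, even after adjoining roots of unity), but this side remark is never used in your actual steps.
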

\begin{proof}
It suffices to prove that, for any $U\in (\mr{fs}/S)$ and any Kummer flat cover $\{U_i\}_{i\in I}$ of $U$, the canonical sequence 
$$0\rightarrow F(U)\rightarrow\prod_{i\in I}F(U_i)\rightarrow \prod_{i,j\in I}F(U_i\times_UU_j)$$
is exact. Let $S'':=S'\times_SS'$, consider the following commutative diagram
$$\xymatrix{
&0\ar[d] &0\ar[d] &0\ar[d]  \\
0\ar[r] &F(U)\ar[r]\ar[d] &\prod_{i\in I}F(U_i)\ar[r]\ar[d] &\prod_{i,j\in I}F(U_i\times_UU_j)\ar[d] \\
0\ar[r] &F(U\times_SS')\ar[r]\ar[d] &\prod_{i\in I}F(U_i\times_SS')\ar[r]\ar[d] &\prod_{i,j\in I}F(U_i\times_UU_j\times_SS')\ar[d] \\
0\ar[r] &F(U\times_SS'')\ar[r] &\prod_{i\in I}F(U_i\times_SS'')\ar[r] &\prod_{i,j\in I}F(U_i\times_UU_j\times_SS'') \\
}$$
with exact columns. Since $F\times_SS'$ is representable by an fs log scheme, so is  $F\times_SS''$. By \cite[Thm. 5.2]{k-k-n4}, both $F\times_SS'$ and $F\times_SS''$ are sheaves for the Kummer flat topology. It follows that the second row and the third row are both exact. Therefore the first row is also exact. This finishes the proof.
\end{proof}

\begin{cor}\label{cor2.1}
Let $S$ be an fs log scheme, and let $G$ be an extension of a k\'et abelian scheme $B$ by a k\'et torus $T$ over $S$. Then the logarithmic augmentation $G_{\mr{log}}$ of $G$ defined in Definition \ref{defn2.5} is a sheaf for the Kummer flat topology.
\end{cor}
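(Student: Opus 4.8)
The sheaf $G_{\mr{log}}$ is by construction a sheaf on $\Sket$, so what must be proved is that it also satisfies descent for Kummer flat covers. The plan is to exploit the short exact sequence
$$0\rightarrow T_{\mr{log}}\rightarrow G_{\mr{log}}\rightarrow B\rightarrow 0$$
of sheaves on $\Sket$ arising from the push-out that defines $G_{\mr{log}}$ (the image of $T_{\mr{log}}$ in $G_{\mr{log}}$ is a subsheaf with quotient $G/T=B$). By Lemma \ref{lem2.2} the k\'et abelian scheme $B$ is a sheaf for the Kummer flat topology, so two things remain: to show that $T_{\mr{log}}$ is a Kummer flat sheaf, and to show that this property is inherited by the extension $G_{\mr{log}}$.

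For $T_{\mr{log}}$ I would rerun the argument of Lemma \ref{lem2.2} essentially verbatim. That proof uses representability only to invoke \cite[Thm. 5.2]{k-k-n4} and thereby guarantee that the base changes $F\times_SS'$ and $F\times_SS''$ are Kummer flat sheaves; the $3\times 3$ diagram chase itself applies to any k\'et sheaf $F$ whose pullbacks to $S'$ and to $S''=S'\times_SS'$ are Kummer flat sheaves. Choosing a Kummer \'etale cover $S'\to S$ over which the character group $X=\mc{H}om_{S_{\mr{k\acute{e}t}}}(T,\Gm)$ becomes constant of some rank $r$ (such a cover exists by the definition of a k\'et lattice), we get $T_{\mr{log}}\times_SS'\cong\mc{H}om_{S'_{\mr{k\acute{e}t}}}(\Z^{r},\Gml)\cong(\Gml)^{r}$, and likewise over $S''$. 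Since $\Gml$ is a Kummer flat sheaf and finite products of sheaves are sheaves, both pullbacks are Kummer flat sheaves, and the diagram chase then shows that $T_{\mr{log}}$ is a Kummer flat sheaf.

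It remains to propagate the sheaf property through the extension. Separatedness of $G_{\mr{log}}$ for the Kummer flat topology is immediate: if $s\in G_{\mr{log}}(U)$ vanishes on a Kummer flat cover $\{U_i\to U\}$, then its image in $B(U)$ vanishes, hence is $0$ since $B$ is separated, so $s$ comes from $T_{\mr{log}}(U)$ and is $0$ since $T_{\mr{log}}$ is separated. For the gluing axiom, a matching family $(s_i)\in\prod_iG_{\mr{log}}(U_i)$ pushes to a matching family in $\prod_iB(U_i)$, which glues to some $\bar s\in B(U)$; lifting $\bar s$ back to $G_{\mr{log}}(U)$ is obstructed by a class $\delta(\bar s)\in H^1_{\mr{k\acute{e}t}}(U,T_{\mr{log}})$, and the local lifts $s_i$ (agreeing on overlaps) exhibit $\bar s$ as coming from the Kummer flat sheafification of $G_{\mr{log}}$, so that $\delta(\bar s)$ maps to $0$ in $H^1_{\mr{kfl}}(U,T_{\mr{log}})$. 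Thus the decisive point—and the step I expect to be the main obstacle—is the injectivity of the comparison map
$$H^1_{\mr{k\acute{e}t}}(U,T_{\mr{log}})\rightarrow H^1_{\mr{kfl}}(U,T_{\mr{log}}),$$
equivalently the statement that a $T_{\mr{log}}$-torsor trivialized by a Kummer flat cover is already trivialized by a Kummer \'etale cover. Granting this, $\bar s$ lifts to a global section, which one adjusts by a section of $T_{\mr{log}}$ (this glues, $T_{\mr{log}}$ being a Kummer flat sheaf) to obtain the required $s\in G_{\mr{log}}(U)$. The comparison reduces, through $T_{\mr{log}}=\mc{H}om_{S_{\mr{k\acute{e}t}}}(X,\Gml)$ and a trivialization of $X$, to the analogous comparison for $\Gml$; alternatively, one may pull back along a Kummer \'etale cover as above and invoke Proposition \ref{prop2.1} to reduce the whole statement to the classical case, in which $G_{\mr{log}}$ is the logarithmic augmentation of a genuine semi-abelian scheme and the Kummer flat sheaf property follows from the theory of logarithmic abelian varieties together with the comparison of Kummer \'etale and Kummer flat cohomology of $\Gml$.
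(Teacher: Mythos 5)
Your overall strategy is correct and genuinely different from the paper's. The paper obtains the Kummer flat sheaf property of $T_{\mr{log}}$ in one line (it is $\mc{H}om_{S_{\mr{k\acute{e}t}}}(X,\Gml)$ with both $X$ and $\Gml$ already kfl sheaves), and then, writing $\delta:(\mr{fs}/S)_{\mr{kfl}}\to(\mr{fs}/S)_{\mr{k\acute{e}t}}$ for the forgetful map, compares the k\'et-exact sequence $0\to T_{\mr{log}}\to G_{\mr{log}}\to B\to 0$ with $\delta_*$ of its kfl sheafification; its key input is Kato's logarithmic Hilbert 90, which gives $R^1\delta_*T_{\mr{log}}=0$ because $T_{\mr{log}}$ is Kummer \'etale locally $\Gml^r$, and the five lemma then shows that the unit $G_{\mr{log}}\to\delta_*\delta^*G_{\mr{log}}$ is an isomorphism. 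You instead argue at the level of sections, by separatedness plus gluing. Your rerun of Lemma \ref{lem2.2} for $T_{\mr{log}}$ is fine: the $3\times 3$ chase indeed uses representability only to know that the pullbacks to $S'$ and $S''$ are kfl sheaves, and $\Gml^r$ is one. Your reduction of the gluing axiom to the injectivity of $H^1_{\mr{k\acute{e}t}}(U,T_{\mr{log}})\to H^1_{\mr{kfl}}(U,T_{\mr{log}})$ is also correct; the needed compatibility of connecting maps holds because sheafification is exact, so the kfl sheafification of the fibre of $G_{\mr{log}}\to B$ over $\bar s$ is the corresponding fibre of $\delta^*G_{\mr{log}}\to B$.

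The one weak point is precisely the step you call decisive, and neither of your proposed justifications works as stated: trivializing $X$ is only possible after passing to a Kummer \'etale cover $V\to U$, and injectivity of the $H^1$-comparison over $V$ does not imply it over $U$ (the kernel of $H^1_{\mr{k\acute{e}t}}(U,T_{\mr{log}})\to H^1_{\mr{k\acute{e}t}}(V,T_{\mr{log}})$ is a \v{C}ech $H^1$, not zero); and the appeal to Proposition \ref{prop2.1} together with ``the theory of log abelian varieties'' amounts to assuming the classical case of the very statement being proved. Fortunately, no nontrivial input is needed here: once you know that $T_{\mr{log}}$ is a kfl sheaf, the injectivity you want is the formal edge-map injectivity $0\to H^1_{\mr{k\acute{e}t}}(U,\delta_*T_{\mr{log}})\to H^1_{\mr{kfl}}(U,T_{\mr{log}})$ from the five-term exact sequence of the Leray spectral sequence for $\delta$; equivalently, $H^1$ agrees with \v{C}ech $H^1$, a k\'et cocycle is also a kfl cocycle, and $\check{H}^1(\mc{V}/U,T_{\mr{log}})\hookrightarrow H^1_{\mr{kfl}}(U,T_{\mr{log}})$ for any k\'et cover $\mc{V}$ of $U$. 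With that observation your proof closes, and it is in one respect more economical than the paper's: it uses logarithmic Hilbert 90 nowhere, whereas the paper needs it in the stronger form $R^1\delta_*T_{\mr{log}}=0$.
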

\begin{proof}
Since $\Gml$ is a sheaf for the Kummer flat topology by \cite[Thm. 3.2]{kat2} and $X$ is a sheaf for the Kummer flat topology by Lemma \ref{lem2.2}, so is $T_{\mr{log}}=\mc{H}om_{S_{\mr{k\acute{e}t}}}(X,\Gml)$. Let $\delta:(\mr{fs}/S)_{\mr{kfl}}\rightarrow (\mr{fs}/S)_{\mr{k\acute{e}t}}$ be the forgetful map between these two sites. The adjunction $(\delta^*,\delta_*)$ gives rise to the following commutative diagram
$$
\xymatrix{
0\ar[r] &T_{\mr{log}}\ar[r]\ar[d]^{=} &G_{\mr{log}}\ar[r]\ar[d] &B\ar[r]\ar[d]^{=} &0 \\
0\ar[r] &T_{\mr{log}}\ar[r]  &\delta_*\delta^*G_{\mr{log}}\ar[r]  &B \ar[r] &R^1\delta_*T_{\mr{log}}
}$$
with exact rows. The left vertical identification comes from $T_{\mr{log}}$ being a sheaf for the Kummer flat topology. The right vertical identification follows from Lemma \ref{lem2.2}. Since $T_{\mr{log}}$ is Kummer \'etale locally of the form $\Gml^{r}$, we get $R^1\delta_*T_{\mr{log}}=0$ by Kato's logarithmic Hilbert 90, see \cite[\S 5]{kat2}. Therefore the canonical map $G_{\mr{log}}\rightarrow \delta_*\delta^*G_{\mr{log}}$ is an isomorphism, i.e. $G_{\mr{log}}$ is also a sheaf for the Kummer flat topology.
\end{proof}

\subsection{Duality of k\'et abelian schemes}
In this subsection we assume that the underlying scheme of the base $S$ is locally noetherian. We formulate the duality theory for k\'et abelian schemes.

Let $B$ be an abelian scheme over a base scheme $S$, the dual abelian scheme $B^{\vee}$ can be described as $\mc{E}xt^1_{S_{\mr{fl}}}(B,\Gm)$ by the Weil-Barsotti formula. We are going to use this description to define the dual of a given k\'et abelian scheme.

\begin{thm}\label{thm2.1}
Let $S$ be an fs log scheme. For any k\'et abelian scheme $B$ over $S$, let $B^{\vee}:=\mc{E}xt^1_{S_{\mr{kfl}}}(B,\Gm)$. Then we have the following.
\begin{enumerate}[(1)]
\item The sheaf $B^{\vee}$ is a k\'et abelian scheme over $S$.
\item There exists a functorial isomorphism $\iota:B\xrightarrow{\cong} (B^{\vee})^{\vee}$.
\end{enumerate}
 \end{thm}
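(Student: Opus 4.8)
The plan is to reduce everything to the classical Weil--Barsotti duality for abelian schemes by descending along a Kummer \'etale cover that trivializes $B$. Since $B$ is a k\'et abelian scheme, there is a Kummer \'etale cover $S'\to S$ such that $B':=B\times_SS'$ is an honest abelian scheme over $S'$. First I would verify that the formation of $B^\vee=\mc{E}xt^1_{S_{\mr{kfl}}}(B,\Gm)$ commutes with this base change, i.e. that $(B^\vee)\times_SS'\cong\mc{E}xt^1_{S'_{\mr{kfl}}}(B',\Gm)$. This compatibility should follow from the fact that pullback along a Kummer \'etale morphism is exact and that the $\mc{E}xt$-sheaf is computed locally. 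Once this is in place, for the classical abelian scheme $B'$ over $S'$ I can invoke Remark \ref{rmk2.1}, which identifies $\mr{Ext}^1_{S'_{\mr{kfl}}}(B',\Gm)$ with the classical $\mr{Ext}^1_{S'_{\mr{fl}}}(B',\Gm)$; sheafifying, $\mc{E}xt^1_{S'_{\mr{kfl}}}(B',\Gm)$ agrees with the classical $\mc{E}xt^1_{S'_{\mr{fl}}}(B',\Gm)$, which by the Weil--Barsotti formula is the dual abelian scheme $(B')^\vee$. This gives that $B^\vee$ becomes an abelian scheme after the cover $S'$, proving part (1).

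For part (1), after the descent argument above I must still confirm that the higher $\mc{E}xt$-terms do not contribute and that $\mc{E}xt^1$ (rather than the naive $\mr{Ext}^1$) is what descends; the local-to-global spectral sequence and the vanishing of $\mc{H}om_{S_{\mr{kfl}}}(B,\Gm)$ (an abelian scheme has no nontrivial characters) should handle this, so that $B^\vee$ really is represented by an abelian scheme Kummer \'etale locally and hence is a k\'et abelian scheme by definition.

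For part (2), the strategy is to produce the biduality isomorphism $\iota$ by gluing the classical biduality isomorphisms $\iota':B'\xrightarrow{\cong}(B')^{\vee\vee}$ over the cover $S'$. The classical biduality over $S'$ gives a canonical isomorphism $B'\cong((B')^\vee)^\vee$, and by the base-change compatibility established in part (1) this is the pullback to $S'$ of the natural evaluation map $B\to(B^\vee)^\vee$ defined via the $\mc{E}xt$-pairing on $\Skfl$. Because the $\mc{E}xt$-description and the evaluation map are defined intrinsically on $\Skfl$ (independently of the choice of cover), the pullbacks to $S'':=S'\times_SS'$ of $\iota'$ along the two projections agree, so $\iota'$ satisfies the cocycle condition and descends to an isomorphism $\iota:B\xrightarrow{\cong}(B^\vee)^\vee$ over $S$. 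Functoriality is inherited from the functoriality of the classical construction together with uniqueness of descent.

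The main obstacle I expect is the base-change compatibility $(B^\vee)\times_SS'\cong\mc{E}xt^1_{S'_{\mr{kfl}}}(B',\Gm)$: one must check that the sheaf $\mc{E}xt^1_{S_{\mr{kfl}}}(B,\Gm)$ commutes with Kummer \'etale pullback, which is not purely formal because Kummer \'etale covers are genuinely ramified and the behavior of $\Gm$ (as opposed to $\Gml$) under such pullbacks must be controlled. Once the $\mc{E}xt$-sheaf is shown to be local in the Kummer \'etale topology and to base-change correctly, the descent of both $B^\vee$ and the biduality isomorphism follows cleanly from the classical theory over $S'$ via Remark \ref{rmk2.1} and the Weil--Barsotti formula.
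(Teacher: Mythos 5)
Your proposal is correct, and for part (1) it is essentially the paper's own argument in different packaging. The paper compares $\mc{E}xt^1_{S_{\mr{kfl}}}(B,\Gm)$ with $\mc{E}xt^1_{S_{\mr{fl}}}(B,\Gm)$ directly at the sheaf level, via the two Grothendieck spectral sequences attached to $\varepsilon_{\mr{fl}}$ together with the vanishing of $\mc{H}om_{S_{\mr{fl}}}(B,R^1\varepsilon_{\mr{fl}*}\Gm)$ (proved as in Proposition \ref{prop2.1}), and then invokes Weil--Barsotti; your route through Remark \ref{rmk2.1} plus sheafification rests on exactly the same two inputs, since Remark \ref{rmk2.1} is itself proved by that argument. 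Also, the step you single out as ``the main obstacle'' is in fact purely formal: a Kummer \'etale cover $S'\to S$ is an object of $(\mr{fs}/S)$, so $(\mr{fs}/S')_{\mr{kfl}}$ is the localization of $(\mr{fs}/S)_{\mr{kfl}}$ at $S'$, and restriction along a localization is exact, has an exact left adjoint, hence preserves injectives and commutes with $\mc{E}xt^n(-,\Gm)$; no control of ramification of $\Gm$ is needed.

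Where you genuinely diverge from the paper is part (2). You glue: take the classical biduality isomorphism over $S'$, verify the cocycle condition over $S'\times_SS'$ (which does hold, by base-change compatibility of classical biduality), and descend using that $\mc{H}om$ is a sheaf on $(\mr{fs}/S)_{\mr{kfl}}$. The paper instead produces $\iota$ globally in one stroke: since $\mc{H}om_{S_{\mr{kfl}}}(B,\Gm)=\mc{H}om_{S_{\mr{kfl}}}(B^{\vee},\Gm)=0$, \cite[Exp. VIII, 1.1.1, 1.1.4]{sga7-1} give
$$\mr{Hom}_{S_{\mr{kfl}}}(B,(B^{\vee})^{\vee})\xleftarrow{\cong}\mr{Biext}^1_{S_{\mr{kfl}}}(B,B^{\vee};\Gm)\xrightarrow{\cong}\mr{Hom}_{S_{\mr{kfl}}}(B^{\vee},B^{\vee}),$$
and $\iota$ is defined as the map corresponding to $1_{B^{\vee}}$; it is Kummer \'etale locally the classical biduality isomorphism, hence an isomorphism. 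The paper's route buys three things: no cocycle condition or descent of morphisms has to be checked, functoriality of $\iota$ is manifest, and the biextension $P$ corresponding to $\iota$ is exactly the Poincar\'e biextension that the paper needs immediately afterwards to set up duality of k\'et 1-motives and k\'et log 1-motives. Your route buys economy of means --- nothing beyond classical biduality and sheaf descent. One caution: as written, your argument wavers between descent and an ``evaluation map defined via the $\mc{E}xt$-pairing on $\Skfl$''; the latter, made precise, is essentially the biextension construction, so you should commit to one of the two (pure descent suffices).
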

\begin{proof}
For part (1), we may assume that $B$ is actually an abelian scheme. Let $\varepsilon_{\mr{fl}}:(\mr{fs}/S)_{\mr{kfl}}\rightarrow (\mr{fs}/S)_{\mr{fl}}$ be the forgetful map between these two sites. Let $F_1$ (resp. $F_2$) be a sheaf on $(\mr{fs}/S)_{\mr{fl}}$ (resp. $(\mr{fs}/S)_{\mr{kfl}}$). Then we have 
$$\varepsilon_{\mr{fl}*}\mc{H}om_{S_{\mr{kfl}}}(\varepsilon_{\mr{fl}}^*F_1,F_2)=\mc{H}om_{S_{\mr{fl}}}(F_1,\varepsilon_{\mr{fl}*} F_2).$$
Let $\theta$ be the functor sending $F_2$ to $\varepsilon_{\mr{fl}*}\mc{H}om_{S_{\mr{kfl}}}(\varepsilon_{\mr{fl}}^*F_1,F_2)=\mc{H}om_{S_{\mr{fl}}}(F_1,\varepsilon_{\mr{fl}*} F_2)$. Then we get two Grothendieck spectral sequences
\begin{equation*}
E_2^{p,q}=R^p\varepsilon_{\mr{fl}*}\circ R^q\mc{H}om_{S_{\mr{kfl}}}(\varepsilon_{\mr{fl}}^*F_1,-)\Rightarrow R^{p+q}\theta
\end{equation*} 
and
\begin{equation*}
E_2^{p,q}=R^p\mc{H}om_{S_{\mr{fl}}}(F_1,-)\circ R^q\varepsilon_{\mr{fl}*} \Rightarrow R^{p+q}\theta .
\end{equation*}
These two spectral sequences give rise to two exact sequences
\begin{equation*}
\begin{split}
0&\rightarrow R^1\varepsilon_{\mr{fl}*}\mc{H}om_{S_{\mr{kfl}}}(\varepsilon_{\mr{fl}}^*F_1,F_2)\rightarrow R^1\theta(F_2)\rightarrow \varepsilon_{\mr{fl}*}\mc{E}xt^1_{S_{\mr{kfl}}}(\varepsilon_{\mr{fl}}^*F_1,F_2)  \\
&\rightarrow R^2\varepsilon_{\mr{fl}*}\mc{H}om_{S_{\mr{kfl}}}(\varepsilon_{\mr{fl}}^*F_1,F_2)
\end{split}
\end{equation*}
and
\begin{equation*}
0\rightarrow \mc{E}xt^1_{S_{\mr{fl}}}(F_1,\varepsilon_{\mr{fl}*} F_2)\rightarrow R^1\theta (F_2)\rightarrow \mc{H}om_{S_{\mr{fl}}}(F_1,R^1\varepsilon_{\mr{fl}*} F_2).
\end{equation*}
Let $F_1=B$ and $F_2=\Gm$. Since $\mc{H}om_{S_{\mr{kfl}}}(B,\Gm)=0$ by \cite[Exp. VIII, 3.2.1]{sga7-1}, we get 
$$R^1\theta (\Gm)\cong \varepsilon_{\mr{fl}*}\mc{E}xt_{S_{\mr{kfl}}}^1(B,\Gm),$$
therefore we get an exact sequence
$$0\rightarrow \mc{E}xt^1_{S_{\mr{fl}}}(B,\Gm)\rightarrow \varepsilon_{\mr{fl}*}\mc{E}xt^1_{S_{\mr{kfl}}}(B,\Gm)\rightarrow \mc{H}om_{S_{\mr{fl}}}(B,R^1\varepsilon_{\mr{fl}*} \Gm).$$
We also have 
$$\mc{H}om_{S_{\mr{fl}}}(B,R^1\varepsilon_{\mr{fl}*} \Gm)=\mc{H}om_{S_{\mr{fl}}}(B,(\Gml/\Gm)_{S_{\mr{fl}}}\otimes_{\Z}(\Q/\Z))=0$$
by a similar argument as in the proof of Proposition \ref{prop2.1}. It follows that 
\begin{equation}\label{eq2.1}
\mc{E}xt^1_{S_{\mr{fl}}}(B,\Gm)\xrightarrow{\cong} \varepsilon_{\mr{fl}*}\mc{E}xt^1_{S_{\mr{kfl}}}(B,\Gm).
\end{equation}
By the Weil-Barsotti formula, the sheaf $\mc{E}xt^1_{S_{\mr{fl}}}(B,\Gm)$ is representable by the dual abelian scheme of $B$. This finishes the proof of part (1).

Now we prove part (2). By \cite[Exp. VIII, 3.2.1]{sga7-1}, we have 
$$\mc{H}om_{S_{\mr{kfl}}}(B,\Gm)=\mc{H}om_{S_{\mr{kfl}}}(B^{\vee},\Gm)=0.$$
By \cite[Exp. VIII, 1.1.1, 1.1.4]{sga7-1}, we get
$$\mr{Hom}_{S_{\mr{kfl}}}(B,(B^{\vee})^{\vee})\xleftarrow{\cong}  \mr{Biext}^1_{S_{\mr{kfl}}}(B,B^{\vee};\Gm)\xrightarrow{\cong} \mr{Hom}_{S_{\mr{kfl}}}(B^{\vee},B^{\vee}).$$
Let $\iota:B\rightarrow(B^{\vee})^{\vee}$ be the homomorphism corresponding to $1_{B^{\vee}}$ under the above identification. Note that $\iota$ is the isomorphism giving the duality in the case that $B$ is actually an abelian scheme. Since $B$ is Kummer \'etale locally an abelian scheme, $\iota$ is Kummer \'etale locally an isomorphism. It follows that $\iota$ is also an isomorphism over $S$.
\end{proof}

\begin{defn}
Let $S$ be an fs log scheme, and $B$ a k\'et abelian scheme over $S$. In view of Theorem \ref{thm2.1}, we call $B^{\vee}:=\mc{E}xt^1_{S_{\mr{kfl}}}(B,\Gm)$ the \textbf{dual k\'et abelian scheme} of $B$. The biextension $P\in \mr{Biext}^1_{S_{\mr{kfl}}}(B,B^{\vee};\Gm)$ corresponding to $\iota$ is called the \textbf{Poincar\'e biextension of $(B,B^{\vee})$ by $\Gm$}.
\end{defn}

\begin{rmk}
In view of (\ref{eq2.1}), one can also define the dual of $B$ in the flat topology.
\end{rmk}

\subsection{Duality of k\'et 1-motives}
In this subsection we keep assuming that the underlying scheme of the base $S$ is locally noetherian. We formulate the duality theory for k\'et 1-motives.

First we give an equivalent description of k\'et 1-motives using Poincar\'e biextension, through which we present the duality theory of k\'et 1-motives. We will also describe morphisms of k\'et 1-motives with respect to the new description. The situation is almost the same as in the case of classical 1-motives, see \cite[10.2.12, 10.2.13, 10.2.14]{del1}.

Let $M=[Y\xrightarrow{u}G]$ be a k\'et 1-motive over $S$, where $G$ is an extension $0\rightarrow T\rightarrow G\rightarrow B\rightarrow 0$ of a k\'et abelian scheme $B$ by a k\'et torus $T$ on $(\mr{fs}/S)_{\mr{kfl}}$. For any element $\chi\in X:=\mc{H}om_{S_{\mr{kfl}}}(T,\Gm)$, the push-out of the short exact sequence $0\rightarrow T\rightarrow G\rightarrow B\rightarrow 0$ along $\chi$ gives rise to an element of $B^{\vee}=\mc{E}xt^1_{S_{\mr{kfl}}}(B,\Gm)$, whence a homomorphism 
\begin{equation}\label{eq2.2}
v^{\vee}:X\rightarrow  B^{\vee}.
\end{equation}
Let $v$ be the composition
\begin{equation}\label{eq2.3}
v:Y\xrightarrow{u}G\rightarrow B,
\end{equation}
then $u$ corresponds to a unique section $s:Y\rightarrow v^*G$ of the extension $v^*G\in\mr{Ext}^1_{S_{\mr{kfl}}}(Y,T)$. Consider the following commutative diagram
\begin{equation}\label{eq2.4}
\xymatrix{
&\mr{Biext}^1_{S_{\mr{kfl}}}(B,B^{\vee};\Gm)\ar[d]^{(1_B,v^{\vee})^*} \ar[r]^-{\cong} &\mr{Hom}_{S_{\mr{kfl}}}(B,B)\\
\mr{Ext}^1_{S_{\mr{kfl}}}(B,T)\ar[d]_{v^*}\ar[r]^-{\cong} &\mr{Biext}^1_{S_{\mr{kfl}}}(B,X;\Gm)\ar[d]^{(v,1_X)^*}  \\
\mr{Ext}^1_{S_{\mr{kfl}}}(Y,T)\ar[r]^-{\cong} &\mr{Biext}^1_{S_{\mr{kfl}}}(Y,X;\Gm)
},
\end{equation}
where the horizontal isomorphisms come from 
$$\mc{H}om_{S_{\mr{kfl}}}(B,\Gm)=0=\mc{E}xt^1_{S_{\mr{kfl}}}(X,\Gm)$$
and $\mc{E}xt^1_{S_{\mr{kfl}}}(B^{\vee},\Gm)=B$ with the help of \cite[Exp. VIII, 1.1.4]{sga7-1}. Since $G$ gives rise to $v^{\vee}$, the biextension corresponding to $G$ is $(1_B,v^{\vee})^*P$ and we have the following mapping diagram 
$$\xymatrix{
&P\ar@{<->}[r]\ar@{|->}[d]  &1_B  \\
G\ar@{<->}[r]\ar@{|->}[d]  &(1_B,v^{\vee})^*P\ar@{|->}[d]   \\
v^*G\ar@{<->}[r] &(v,v^{\vee})^*P
}
$$
with respect to the commutative diagram (\ref{eq2.4}). The section $s$ of $v^*G$ corresponds to a section of the biextension $(v,v^{\vee})^*P$ of $(Y,X)$ by $\Gm$, which we still denote by $s$ by abuse of notation. Therefore we get an equivalent description of the k\'et 1-motive $M=[Y\xrightarrow{u}G]$ of the form
\begin{equation}\label{eq2.5}
\xymatrix{
(v\times v^{\vee})^*P\ar[r]\ar[d] &P\ar[d]   \\
Y\times X\ar[r]^{v\times v^{\vee}}\ar@/^/[u]^s  &B\times B^{\vee} 
},
\end{equation}
where $(v\times v^{\vee})^*P$ denotes the pull-back of the Poincar\'e biextension $P$. Note that the section $s$ and the composition $Y\times X\to(v\times v^{\vee})^*P\to P$ determine each other, thus we also denote the composition by $s$ by abuse of notation and use the diagram
\begin{equation}
\xymatrix{
&P\ar[d]   \\
Y\times X\ar[r]^{v\times v^{\vee}}\ar[ru]^s  &B\times B^{\vee} 
}
\end{equation}
to describe $M$. The description (\ref{eq2.5}) is symmetric. If we switch the role of $Y$ and $X$, $v$ and $v^{\vee}$, $B$ and $B^{\vee}$, we get another k\'et 1-motive $M^{\vee}=[X\xrightarrow{u^{\vee}}G^{\vee}]$, where 
\begin{equation}\label{eq2.6}
G^{\vee}\in\mr{Ext}^1_{S_{\mr{kfl}}}(B^{\vee},T^{\vee})
\end{equation}
corresponds to $(v,1_{B^{\vee}})^*P\in\mr{Biext}^1_{S_{\mr{kfl}}}(Y,B^{\vee};\Gm)$ with $T^{\vee}:=\mc{H}om_{S_{\mr{kfl}}}(Y,\Gm)$. The association of $M^{\vee}$ to $M$ is clearly a duality.

\begin{defn}
We call the k\'et 1-motive $M^{\vee}=[X\xrightarrow{u^{\vee}}G^{\vee}]$ the \textbf{dual k\'et 1-motive} of the k\'et 1-motive $M=[Y\xrightarrow{u}G]$.
\end{defn}

We give a description of morphisms of k\'et 1-motives via Poincar\'e biextension.  
Let 
$$(f_{-1},f_0):M=[Y\xrightarrow{u}G]\to[Y'\xrightarrow{u'}G']=M'$$
be a morphism of k\'et 1-motives. 

\begin{lem}\label{lem2.3}
There is no non-trivial homomorphism between a k\'et torus and a k\'et abelian scheme.
\end{lem}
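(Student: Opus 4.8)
The plan is to prove that $\mc{H}om_{S_{\mr{k\acute{e}t}}}(T,B) = 0$ and $\mc{H}om_{S_{\mr{k\acute{e}t}}}(B,T) = 0$ for a k\'et torus $T$ and a k\'et abelian scheme $B$. Since the statement is local for the Kummer \'etale topology, by the very definition of k\'et tori and k\'et abelian schemes I may pass to a Kummer \'etale cover $S'$ of $S$ over which $T$ becomes an honest torus and $B$ becomes an honest abelian scheme. Because $\mc{H}om(-,-)$ is a sheaf, a section over $S$ is determined by its pullback to $S'$, so it suffices to establish the vanishing for a classical torus and a classical abelian scheme, but now computed in the Kummer \'etale topology over $S'$. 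This reduces the lemma to a statement about classical objects sitting inside the finer k\'et world.

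First I would treat $\mc{H}om_{S_{\mr{k\acute{e}t}}}(B,T)$. Reducing to $T = \Gm$, the claim becomes $\mc{H}om_{S_{\mr{k\acute{e}t}}}(B,\Gm) = 0$. The classical fact $\mc{H}om_{S_{\mr{fl}}}(B,\Gm)=0$ is \cite[Exp. VIII, 3.2.1]{sga7-1}, and one expects the Kummer \'etale version to follow by a Leray-type argument through the forgetful map $\varepsilon = \varepsilon_{\mr{\acute{e}t}}$ of \eqref{eq0.1}. Concretely, I would use the pushforward identity $\varepsilon_* \mc{H}om_{S_{\mr{k\acute{e}t}}}(B,\Gm) = \mc{H}om_{S_{\mr{\acute{e}t}}}(B, \varepsilon_* \Gm) = \mc{H}om_{S_{\mr{\acute{e}t}}}(B,\Gm) = 0$, since $\varepsilon_* \Gm = \Gm$. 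As $\mc{H}om_{S_{\mr{k\acute{e}t}}}(B,\Gm)$ is itself a sheaf that can be tested Kummer \'etale locally, the vanishing of its pushforward together with the fact that it has no sections after restriction to strict covers forces it to vanish. Here I can lean directly on the vanishing computations already carried out in the proof of Proposition \ref{prop2.1}, where exactly the interaction of $B$ with $R^1\varepsilon_*\Gm$ was analyzed.

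Next I would treat the other direction, $\mc{H}om_{S_{\mr{k\acute{e}t}}}(T,B)$. Reducing again to $T=\Gm$, a nontrivial homomorphism $\Gm \to B$ would, Kummer \'etale locally, be a classical homomorphism of group schemes from a torus into an abelian scheme, which is known to be zero: a torus is affine and connected with trivial unipotent and abelian parts, and its image in an abelian scheme must be a connected affine subgroup, hence trivial. Working fiberwise and using rigidity of abelian schemes, any such map is constant, and being a homomorphism it is the zero map. I would phrase this as: Kummer \'etale locally the homomorphism becomes a classical one, the classical vanishing applies, and the sheaf property propagates the vanishing back to $S$.

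The main obstacle I anticipate is not the classical input but the careful bookkeeping of the descent and of which topology the $\mc{H}om$ sheaves live in, since a k\'et torus or k\'et abelian scheme is only locally representable and the Hom-sheaf must be interpreted on $\fsSket$. One must make sure that ``Kummer \'etale locally a classical map'' is legitimate, i.e. that the restriction of the Hom-sheaf to the cover $S'$ genuinely computes classical homomorphisms, which is where the sheaf property from Lemma \ref{lem2.2} and the compatibility of $\varepsilon_*$ with the relevant functors are essential. Once this descent is set up cleanly, both vanishings follow from the classical statements, and the lemma is immediate.
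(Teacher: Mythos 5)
Your proof is correct and follows essentially the same route as the paper, whose entire proof is the one-line reduction, via Kummer \'etale localization and the sheaf property, to the classical fact that there is no non-trivial homomorphism between a torus and an abelian scheme. Your Leray-type detour through $\varepsilon_*$ and the $R^1\varepsilon_*\Gm$ analysis of Proposition \ref{prop2.1} is harmless but unnecessary for a sheaf $\mc{H}om$ (as opposed to $\mc{E}xt^1$): the two sites share the same underlying category, and since both objects are representable sheaves in either topology, Yoneda already identifies k\'et sheaf homomorphisms with classical group scheme homomorphisms, so the classical vanishing applies directly after pulling back to the cover $S'$.
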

\begin{proof}
This follows from the classical result that there is no non-trivial homomorphism between a torus and an abelian scheme.
\end{proof}

By Lemma \ref{lem2.3}, $f_0$ induces a map $f_{\mr{t}}:T\to T'$  on the torus part and a map $f_{\mr{ab}}:B\to B'$ on the abelian part. Let $f_{-1}^{\vee}:X'\to X$ be the map between the character groups induced by $f_{\mr{t}}$, and let $f_{\mr{ab}}^\vee:B^{'\vee}\to B^\vee$ be the dual of $f_{\mr{ab}}$. Let $v$ (resp. $v^\vee$) be as in (\ref{eq2.3}) (resp. (\ref{eq2.2})), and similarly we define $v'$ and $v^{'\vee}$ for $M'$. Then we get two commutative squares
\begin{equation}\label{eq2.8}
\xymatrix{Y\ar[r]^v\ar[d]_{f_{-1}} &B\ar[d]^{f_{\mr{ab}}} \\  Y'\ar[r]_{v'} &B'} , \quad
\xymatrix{X\ar[r]^{v^\vee} &B^\vee \\   X'\ar[u]^{f_{-1}^{\vee}}\ar[r]_{v^{'\vee}} &B^{'\vee}\ar[u]_{f_{\mr{ab}}^\vee}}.
\end{equation}
Let $P$ (resp. $P'$) be the Poincar\'e biextension of $(B,B^\vee)$ (resp. $(B',B^{'\vee})$) by $\Gm$, and $s:Y\times X\to P$ (resp. $s':Y'\times X'\to P'$) the section corresponding to $M$ (resp. $M'$). Then in the following canonical diagram
\begin{equation}\label{eq2.9}
\xymatrix{
&&&P\ar[ld]  \\
Y\times X\ar[rr]_{v\times v^{\vee}}\ar[rrru]^(.6)s &&B\times B^\vee  \\
&&&(1\times f_{\mr{ab}}^\vee)^*P=(f_{\mr{ab}}\times 1)^*P'=:Q \ar[ld]\ar[uu]\ar[dd]   \\
Y\times X'\ar[rr]_{v\times v^{'\vee}}\ar[uu]_{1\times f_{-1}^{\vee}}\ar[dd]^{f_{-1}\times 1}\ar[rrru]|!{[rr];[rruu]}\hole &&B\times B^{'\vee}\ar[uu]^(.6){1\times f_{\mr{ab}}^\vee}\ar[dd]_(.4){f_{\mr{ab}}\times 1}  \\
&&&P'\ar[ld]  \\
Y'\times X'\ar[rr]_{v'\times v^{'\vee}}\ar[rrru]^(.6){s'}|!{[rr];[rruu]}\hole &&B'\times B^{'\vee}    
},
\end{equation}
the equality $f_0\circ u=u'\circ f_{-1}$ implies that for any $y\in Y$ and any $x'\in X'$ we have
\begin{equation}\label{eq2.10}
s(y,f_{-1}^\vee(x'))=s'(f_{-1}(y),x')
\end{equation}
after identifying the $\Gm$-torsors $P_{v(y),v^{\vee}(f_{-1}^\vee(x'))}$ and $P'_{v'(f_{-1}(y)),v^{'\vee}(x')}$ along the composition 
$$P_{v(y),v^{\vee}(f_{-1}^\vee(x'))}\xleftarrow{\cong}Q_{v(y),v^{'\vee}(x')}\xrightarrow{\cong}P_{v'(f_{-1}(y)),v^{'\vee}(x')}.$$
Conversely, given any two commutative squares as in (\ref{eq2.8}) such that the equality (\ref{eq2.10}) holds with respect to the diagram (\ref{eq2.9}), we get a morphism from $M$ to $M'$ of k\'et 1-motives.

\subsection{Duality of k\'et log 1-motives}
In this subsection we keep assuming that the underlying scheme of the base $S$ is locally noetherian. We formulate the duality theory for k\'et log 1-motives, which is analogous to the case of k\'et 1-motives.

First we give an equivalent description of k\'et log 1-motives using Poincar\'e biextension, through which we present the dual theory of k\'et log 1-motives. We will also describe morphisms of k\'et log 1-motives with respect to the new description. 

Let $M=[Y\xrightarrow{u}G_{\mr{log}}]$ be a k\'et log 1-motive over $S$, where $G$ is an extension $0\rightarrow T\rightarrow G\rightarrow B\rightarrow 0$ of a k\'et abelian scheme $B$ by a k\'et torus $T$ on $(\mr{fs}/S)_{\mr{kfl}}$. For any element $\chi\in X:=\mc{H}om_{S_{\mr{kfl}}}(T,\Gm)$, the push-out of the short exact sequence $0\rightarrow T\rightarrow G\rightarrow B\rightarrow 0$ along $\chi$ gives rise to an element of $B^{\vee}=\mc{E}xt^1_{S_{\mr{kfl}}}(B,\Gm)$, whence a homomorphism 
\begin{equation}\label{eq2.11}
v^{\vee}:X\rightarrow  B^{\vee}.
\end{equation}
Let $v$ be the composition 
\begin{equation}\label{eq2.12}
v:Y\xrightarrow{u}G_{\mr{log}}\rightarrow B,
\end{equation}
then $u$ corresponds to a unique section $s:Y\rightarrow v^*G_{\mr{log}}$ of the extension $v^*G_{\mr{log}}\in\mr{Ext}^1_{S_{\mr{kfl}}}(Y,T_{\mr{log}})$. Consider the following commutative diagram
\begin{equation}\label{eq2.13}
\xymatrix{
&\mr{Biext}^1_{S_{\mr{kfl}}}(B,B^{\vee};\Gml)\ar[d]^{(1_B,v^{\vee})^*} \\
\mr{Ext}^1_{S_{\mr{kfl}}}(B,T_{\mr{log}})\ar[d]_{v^*}\ar[r]^-{\cong} &\mr{Biext}^1_{S_{\mr{kfl}}}(B,X;\Gml)\ar[d]^{(v,1_X)^*}  \\
\mr{Ext}^1_{S_{\mr{kfl}}}(Y,T_{\mr{log}})\ar[r]^-{\cong} &\mr{Biext}^1_{S_{\mr{kfl}}}(Y,X;\Gml)
},
\end{equation}
where the horizontal isomorphisms come from 
$$\mc{E}xt^1_{S_{\mr{kfl}}}(X,\Gml)=0$$ 
with the help of \cite[Exp. VIII, 1.1.4]{sga7-1}. There is an obvious map from the diagram (\ref{eq2.4}) to the diagram (\ref{eq2.13}). Let $P^{\mr{log}}$ be the push-out of $P$ along $\Gm\hookrightarrow\Gml$, and we call it the \textbf{Poincar\'e biextension of $(B,B^\vee)$ by $\Gml$}. Since $G\in \mr{Ext}^1_{S_{\mr{kfl}}}(B,T)$ corresponds to the biextension 
$$(1_B,v^{\vee})^*P\in \mr{Biext}^1_{S_{\mr{kfl}}}(B,X;\Gm),$$
we have that $G_{\mr{log}}\in\mr{Ext}^1_{S_{\mr{kfl}}}(B,T_{\mr{log}})$ corresponds to 
$$(1_B,v^{\vee})^*P^{\mr{log}}\in \mr{Biext}^1_{S_{\mr{kfl}}}(B,X;\Gml).$$
We have the following mapping diagram 
$$\xymatrix{
&P^{\mr{log}}\ar@{|->}[d]  \\
G_{\mr{log}}\ar@{<->}[r]\ar@{|->}[d]  &(1_B,v^{\vee})^*P^{\mr{log}}\ar@{|->}[d]   \\
v^*G_{\mr{log}}\ar@{<->}[r] &(v,v^{\vee})^*P^{\mr{log}}
}
$$
with respect to the commutative diagram (\ref{eq2.13}). The section $s$ of $v^*G_{\mr{log}}$ corresponds to a section of the biextension $(v,v^{\vee})^*P^{\mr{log}}$ of $(Y,X)$ by $\Gml$, which we still denote by $s$ by abuse of notation. Therefore we get an equivalent description of the k\'et log 1-motive $M=[Y\xrightarrow{u}G_{\mr{log}}]$ of the form
\begin{equation}\label{eq2.14}
\xymatrix{
(v\times v^{\vee})^*P^{\mr{log}}\ar[r]\ar[d] &P^{\mr{log}}\ar[d]   \\
Y\times X\ar[r]^{v\times v^{\vee}}\ar@/^/[u]^s  &B\times B^{\vee} 
},
\end{equation}
where $(v\times v^{\vee})^*P^{\mr{log}}$ denotes the pull-back of $P^{\mr{log}}$ along $v\times v^{\vee}$.  Note that the section $s$ and the composition $Y\times X\to(v\times v^{\vee})^*P^{\mr{log}}\to P^{\mr{log}}$ determine each other, and we also denote the composition by $s$ by abuse of notation and use the diagram
\begin{equation}\label{eq2.15}
\xymatrix{
&P^{\mr{log}}\ar[d]   \\
Y\times X\ar[r]^{v\times v^{\vee}}\ar[ru]^s  &B\times B^{\vee} 
}
\end{equation}
to describe $M$. Note that the description (\ref{eq2.14}) is symmetric. If we switch the role of $Y$ and $X$, $v$ and $v^{\vee}$, $B$ and $B^{\vee}$, we get another k\'et log 1-motive $M^{\vee}=[X\xrightarrow{u^{\vee}}G_{\mr{log}}^{\vee}]$, where $G_{\mr{log}}^{\vee}$ is the log augmentation of $G^{\vee}$ (see (\ref{eq2.6})). The association of $M^{\vee}$ to $M$ is clearly a duality.

\begin{defn}
We call the k\'et log 1-motive $M^{\vee}=[X\xrightarrow{u^{\vee}}G_{\mr{log}}^{\vee}]$ the \textbf{dual k\'et log 1-motive} of the k\'et log 1-motive $M=[Y\xrightarrow{u}G_{\mr{log}}]$.
\end{defn}

We give a description of morphisms of k\'et log 1-motives via Poincar\'e biextension.  
Let 
$$(f_{-1},f_0):M=[Y\xrightarrow{u}G_{\mr{log}}]\to[Y'\xrightarrow{u'}G'_{\mr{log}}]=M'$$
be a morphism of k\'et log 1-motives over $S$.
\begin{lem}\label{lem2.4}
The canonical map
$$\mr{Hom}_S(G,G')\to\mr{Hom}_S(G_{\mr{log}},G'_{\mr{log}})$$
is an isomorphism.
\end{lem}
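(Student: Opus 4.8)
The plan is to use that, by Definition \ref{defn2.5}, $G_{\mr{log}}$ is the push-out of $G$ along $T\hookrightarrow T_{\mr{log}}$, so there is a canonical monomorphism $G\hookrightarrow G_{\mr{log}}$ whose cokernel is $(G_{\mr{log}}/G)\cong(T_{\mr{log}}/T)\cong\mc{H}om_{S_{\mr{kfl}}}(X,\Gmlb)$, and symmetrically for $G'$. First I would make the canonical map explicit: given $f\colon G\to G'$, Lemma \ref{lem2.3} decomposes it into $f_{\mr t}\colon T\to T'$ and $f_{\mr{ab}}\colon B\to B'$; functoriality of $\mc{H}om_{S_{\mr{kfl}}}(-,\Gml)$ in the character group turns $f_{\mr t}$ into a map $T_{\mr{log}}\to T'_{\mr{log}}$, and the universal property of the push-out assembles these into $f_{\mr{log}}\colon G_{\mr{log}}\to G'_{\mr{log}}$ with $f_{\mr{log}}|_{G}=f$. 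Injectivity of the canonical map is then immediate: if $f_{\mr{log}}=0$, then $f=f_{\mr{log}}|_{G}$ is the zero map followed by the monomorphism $G'\hookrightarrow G'_{\mr{log}}$, hence $f=0$.

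For surjectivity I would take an arbitrary $g\colon G_{\mr{log}}\to G'_{\mr{log}}$ and show it preserves the relevant filtrations. By Lemma \ref{lem2.2} and Corollary \ref{cor2.1} every sheaf involved is a sheaf for the Kummer flat topology, and by Proposition \ref{prop2.1} the extensions $G$ and $G'$ become classical extensions of an abelian scheme by a torus after a Kummer \'etale cover of $S$; since being an isomorphism can be checked Kummer \'etale locally and the canonical map is compatible with base change along such covers, I may assume $T,T',B,B'$ are an honest torus and abelian scheme. Composing $g$ with the projection $G'_{\mr{log}}\twoheadrightarrow\mc{H}om_{S_{\mr{kfl}}}(X',\Gmlb)$ and restricting to $G$ gives an element of $\mr{Hom}_{S_{\mr{kfl}}}(G,\mc{H}om_{S_{\mr{kfl}}}(X',\Gmlb))$; this group vanishes because there is no nonzero homomorphism from a torus or an abelian scheme into $\Gmlb$, whence $g(G)\subseteq G'$ and $f:=g|_{G}\colon G\to G'$ is well defined. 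Dually, composing $g|_{T_{\mr{log}}}$ with $G'_{\mr{log}}\twoheadrightarrow B'$ yields an element of $\mr{Hom}_{S_{\mr{kfl}}}(T_{\mr{log}},B')\cong\mr{Hom}_{S_{\mr{kfl}}}(\mc{H}om_{S_{\mr{kfl}}}(X,\Gmlb),B')$, which I claim is again zero, so that $g(T_{\mr{log}})\subseteq T'_{\mr{log}}$.

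Granting these vanishings, $g|_{T_{\mr{log}}}$ is a homomorphism $T_{\mr{log}}\to T'_{\mr{log}}$ restricting to $f_{\mr t}=f|_{T}$ on $T$; using the description $T_{\mr{log}}=\mc{H}om_{S_{\mr{kfl}}}(X,\Gml)$ and the logarithmic Hilbert 90 already invoked in Corollary \ref{cor2.1}, I would identify it with the log augmentation $(f_{\mr t})_{\mr{log}}$ of $f_{\mr t}$. Since $G_{\mr{log}}$ is generated by $G$ and $T_{\mr{log}}$, the homomorphisms $g$ and $f_{\mr{log}}$ then agree on both and hence coincide, giving surjectivity. The main obstacle is exactly the vanishing $\mr{Hom}_{S_{\mr{kfl}}}(\mc{H}om_{S_{\mr{kfl}}}(X,\Gmlb),B')=0$: since $\Gmlb$ is divisible for the Kummer flat topology one cannot simply invoke connectedness as for an ordinary torus, and I expect the clean argument to be a reduction to a log point in the spirit of the proof of Proposition \ref{prop2.1}, where the restriction of the purely logarithmic part to strict objects becomes manageable and homomorphisms from it into an abelian scheme are seen to vanish.
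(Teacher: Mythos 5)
Your architecture (injectivity from $f_{\mr{log}}|_G=f$; surjectivity by showing an arbitrary $g\colon G_{\mr{log}}\to G'_{\mr{log}}$ preserves the filtrations $G\subset G_{\mr{log}}$ and $T_{\mr{log}}\subset G_{\mr{log}}$) is reasonable, but the proof has a genuine gap exactly where you flag it, and the repair you propose cannot work. The vanishing $\mr{Hom}_{S_{\mr{kfl}}}\bigl(\mc{H}om_{S_{\mr{kfl}}}(X,\Gmlb),B'\bigr)=0$ is not of the same nature as the one in Proposition \ref{prop2.1}: there the \emph{source} is an abelian scheme and the \emph{target} becomes locally constant after restriction to strict objects, so connectedness of the source kills everything. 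Your vanishing goes in the opposite direction, and after restriction to strict objects over a log point it is simply \emph{false}: if the underlying scheme of the log point is $\Spec\C$, then $\mc{H}om_{S_{\mr{kfl}}}(X,\Gmlb)$ restricts to a locally constant sheaf of $\Q$-vector spaces, and a sheaf homomorphism from the constant sheaf $\Q$ into $B'$ on the strict site can be produced from any group homomorphism $\Q\to B'(\C)$ (extended by constant sections); such homomorphisms exist in abundance because $B'(\C)$ is divisible. Since the Proposition \ref{prop2.1} strategy concludes precisely by proving vanishing after restriction to strict objects, it cannot prove your claim. The vanishing does hold on all of $(\mr{fs}/S)_{\mr{kfl}}$, but only because of functoriality along \emph{non-strict} morphisms: for instance, a homomorphism $\psi$ must send the class of the universal log section $x$ over $\mathbb{A}^1_R$ (log structure generated by $\pi$ and $x$) to an element of $B'(\mathbb{A}^1_R)=B'(R)$, constant by rigidity of abelian schemes, and pulling back along the sections $x\mapsto\pi^n$ forces $n\cdot\psi([\pi])$ to be independent of $n$, whence $\psi([\pi])=0$. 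This rigidity-type argument is the missing idea; the same defect affects your last step, since identifying $g|_{T_{\mr{log}}}$ with $(f_{\mr{t}})_{\mr{log}}$ needs $\mr{Hom}\bigl(\mc{H}om_{S_{\mr{kfl}}}(X,\Gmlb),\mc{H}om_{S_{\mr{kfl}}}(X',\Gml)\bigr)=0$, which again fails after restriction to strict objects over a $\C$-valued log point.

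The paper avoids all of this by descent: since $G$, $G'$, $G_{\mr{log}}$, $G'_{\mr{log}}$ are sheaves for the Kummer \'etale topology, $\mr{Hom}_S$ is the equalizer of the two restriction maps to a Kummer \'etale cover $\tilde S$ (over which everything is classical) and to $\tilde S\times_S\tilde S$, and over those bases the statement is exactly \cite[Prop.~2.5]{k-k-n2}; a diagram chase finishes. Two further remarks on your write-up: the phrase ``being an isomorphism can be checked Kummer \'etale locally'' is not valid for global $\mr{Hom}$ groups as stated --- only the equalizer property holds, which is what the paper's diagram encodes, and which you would also need in order to descend the $f$ you construct over a cover back to $S$; and note that once you have reduced to classical $T,T',B,B'$, the statement you are attempting to prove from scratch \emph{is} \cite[Prop.~2.5]{k-k-n2}, whose proof contains exactly the rigidity arguments you are missing, so citing it, as the paper does, is the efficient route.
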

\begin{proof}
Let $\tilde{S}$ be a Kummer \'etale cover of $S$ such that both $G\times_S\tilde{S}$ and $G'\times_S\tilde{S}$ are extensions of a classical abelian scheme by a classical torus over $\tilde{S}$, and let $\tilde{\tilde{S}}:=\tilde{S}\times_S\tilde{S}$. In the following commutative diagram
$$\xymatrix{
0\ar[r] &\mr{Hom}_S(G,G')\ar[r]\ar[d] &\mr{Hom}_{\tilde{S}}(G,G')\ar[r]\ar[d]^\cong &\mr{Hom}_{\tilde{\tilde{S}}}(G,G')\ar[d]^\cong \\
0\ar[r] &\mr{Hom}_S(G_{\mr{log}},G'_{\mr{log}})\ar[r] &\mr{Hom}_{\tilde{S}}(G_{\mr{log}},G'_{\mr{log}})\ar[r] &\mr{Hom}_{\tilde{\tilde{S}}}(G_{\mr{log}},G'_{\mr{log}})
}$$
with exact rows, both the middle vertical map and the right vertical map are isomorphisms by \cite[Prop. 2.5]{k-k-n2}. It follows that the left vertical map is an isomorphism.
\end{proof}

By Lemma \ref{lem2.4}, $f_0$ is induced by a unique homomorphism $f_{\mr{sab}}:G\to G'$ over $S$. Let $f_{\mr{t}}:T\to T'$ be the map induced by $f_{\mr{sab}}$ on the torus part, and $f_{-1}^{\vee}:X'\to X$ the map between the character groups induced by $f_{\mr{t}}$. Let $f_{\mr{ab}}:B\to B'$ be the map induced by $f_{\mr{sab}}$ on the abelian part, and let $f_{\mr{ab}}^\vee:B^{'\vee}\to B^\vee$ be the dual of $f_{\mr{ab}}$. Let $v$ (resp. $v^\vee$) be as in (\ref{eq2.12}) (resp. (\ref{eq2.11})), and similarly we define $v'$ and $v^{'\vee}$ for $M'$. Then we get two commutative squares
\begin{equation}\label{eq2.16}
\xymatrix{Y\ar[r]^v\ar[d]_{f_{-1}} &B\ar[d]^{f_{\mr{ab}}} \\  Y'\ar[r]_{v'} &B'} , \quad
\xymatrix{X\ar[r]^{v^\vee} &B^\vee \\   X'\ar[u]^{f_{-1}^{\vee}}\ar[r]_{v^{'\vee}} &B^{'\vee}\ar[u]_{f_{\mr{ab}}^\vee}}.
\end{equation}
Let $P^{\mr{log}}$ (resp. $P^{'\mr{log}}$) be the Poincar\'e biextension of $(B,B^\vee)$ (resp. $(B',B^{'\vee})$) by $\Gml$, and $s:Y\times X\to P^{\mr{log}}$ (resp. $s':Y'\times X'\to P^{'\mr{log}}$) the section corresponding to $M$ (resp. $M'$). Then in the following canonical diagram
\begin{equation}\label{eq2.17}
\xymatrix{
&&&P^{\mr{log}}\ar[ld]  \\
Y\times X\ar[rr]_{v\times v^{\vee}}\ar[rrru]^(.6)s &&B\times B^\vee  \\
&&&(1\times f_{\mr{ab}}^\vee)^*P^{\mr{log}}=(f_{\mr{ab}}\times 1)^*P^{'\mr{log}}=:Q^{\mr{log}} \ar[ld]\ar[uu]\ar[dd]   \\
Y\times X'\ar[rr]_{v\times v^{'\vee}}\ar[uu]_{1\times f_{-1}^{\vee}}\ar[dd]^{f_{-1}\times 1}\ar[rrru]|!{[rr];[rruu]}\hole &&B\times B^{'\vee}\ar[uu]^(.6){1\times f_{\mr{ab}}^\vee}\ar[dd]_(.4){f_{\mr{ab}}\times 1}  \\
&&&P^{'\mr{log}}\ar[ld]  \\
Y'\times X'\ar[rr]_{v'\times v^{'\vee}}\ar[rrru]^(.6){s'}|!{[rr];[rruu]}\hole &&B'\times B^{'\vee}    
},
\end{equation}
the equality $f_0\circ u=u'\circ f_{-1}$ implies that for any $y\in Y$ and any $x'\in X'$ we have
\begin{equation}\label{eq2.18}
s(y,f_{-1}^\vee(x'))=s'(f_{-1}(y),x')
\end{equation}
after identifying the $\Gml$-torsors $P^{\mr{log}}_{v(y),v^{\vee}(f_{-1}^\vee(x'))}$ and $P^{'\mr{log}}_{v'(f_{-1}(y)),v^{'\vee}(x')}$ along the composition 
$$P^{\mr{log}}_{v(y),v^{\vee}(f_{-1}^\vee(x'))}\xleftarrow{\cong}Q^{\mr{log}}_{v(y),v^{'\vee}(x')}\xrightarrow{\cong}P^{'\mr{log}}_{v'(f_{-1}(y)),v^{'\vee}(x')}.$$
Conversely given any two commutative squares as in (\ref{eq2.16}) such that the equality (\ref{eq2.18}) holds with respect to the diagram (\ref{eq2.17}), we get a morphism from $M$ to $M'$ of k\'et log 1-motives.

\section{Extending tamely ramified strict 1-motives into k\'et log 1-motives}\label{sec3}
From now on, $R$ is a complete discrete valuation ring with fraction field $K$, residue field $k$, and a chosen uniformizer $\pi$, $S=\Spec R$, and we endow $S$ with the log structure associated to $\N\rightarrow R,1\mapsto \pi$. Let $s$ (resp. $\eta$) be the closed (resp. generic) point of $S$, and we denote by $i:s\hookrightarrow S$ (resp. $j:\eta\hookrightarrow S$) the closed (resp. open) immersion of $s$ (resp. $\eta$) into $S$. We endow $s$ with the induced log structure from $S$.

Following \cite[Def. 4.2.3]{ray2}, a 1-motive $M_K=[Y_K\xrightarrow{u_K}G_K]$ over $K$ is called \textbf{strict}, if $G_K$ has potentially good reduction. We call a 1-motive $M_K=[Y_K\xrightarrow{u_K}G_K]$ over $K$ \textbf{tamely ramified}, if there exists a tamely ramified finite field extension $K'$ of $K$ such that $Y_K\times_{\Spec K}\Spec K'$ has good reduction and $G_K\times_{\Spec K}\Spec K'$ has semi-stable reduction (i.e. the connected component of the N\'eron model of $G_K\times_{\Spec K}\Spec K'$ is a semi-abelian scheme). A lattice (resp. torus, resp. abelian variety) over $K$ is called \textbf{tamely ramified}\footnote{For abelian variety, this terminology agrees with the one from \cite[\S 2.1.4]{h-n1}}, if it is so regarded as a 1-motive over $K$. A lattice (resp. torus) over $K$ is called \textbf{unramified}, if it extends to a lattice (resp. torus) over $S$. The main goal is to prove the following theorem.

\begin{thm}\label{thm3.1}
Let $M_K=[Y_K\xrightarrow{u_K}G_K]$ be a tamely ramified strict 1-motive over $K$ with $G_K$ an extension of an abelian variety $B_K$ by a torus $T_K$. Then $M_K$ extends to a unique k\'et log 1-motive $M^{\mr{log}}$ over $S$.

Moreover the association of $M^{\mr{log}}$ to $M$ gives rise to an equivalence
$$K\acute{e}t:\mr{TameSt\textnormal{-}1\textnormal{-}Mot}_K\to \mr{K\acute{e}tLog\textnormal{-}1\textnormal{-}Mot}_S$$
from the category of tamely ramified strict 1-motives over $K$ to the category of k\'et log 1-motives over $S$.
\end{thm}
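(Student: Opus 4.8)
The plan is to reduce to the good-reduction case by a tamely ramified base change, which in log geometry is exactly a Kummer \'etale cover of $S$, and then to descend. First I would observe that the two hypotheses together force good reduction after a single tame extension. Since $M_K$ is tamely ramified, there is a tame finite extension $K'/K$ over which $Y_{K'}$ acquires good reduction and $G_{K'}$ becomes semi-stable. Since strictness means $G_K$, hence $B_K$, has potentially good reduction, the abelian variety $B_{K'}$ is simultaneously semi-stable and potentially good, so it has good reduction by N\'eron--Ogg--Shafarevich; as the torus $T_{K'}$ is split, $G_{K'}$ extends to a semi-abelian scheme $\mc{G}'$ over $S':=\Spec R'$ which is an extension of an abelian scheme by a torus. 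After enlarging $K'$ I may assume $K'/K$ is Galois with group $\Gamma$; as a tame tower (an unramified extension followed by $K(\pi^{1/e})$ with $e$ prime to $\mr{char}\,k$), the map $S'\to S$ with its induced log structure is a Kummer \'etale cover.

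Over $S'$ both $Y_{K'}$ and $G_{K'}$ have good reduction, so by the good-reduction case---which follows from the theory of log $1$-motives in \cite{k-k-n2} (cf. Corollary \ref{cor3.1})---the $1$-motive $M_{K'}$ extends uniquely to a log $1$-motive $M'^{\mr{log}}=[Y'\to\mc{G}'_{\mr{log}}]$ over $S'$; here $u_{K'}\colon Y_{K'}\to G_{K'}$ extends to $Y'\to\mc{G}'_{\mr{log}}$ precisely because the log augmentation absorbs the degeneration into $\Gmlb$. This extension is functorial in $M_{K'}$, hence upgrades to an equivalence between good-reduction $1$-motives over $K'$ and log $1$-motives over $S'$.

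Because $M_K$ descends $M_{K'}$, the object $M_{K'}$ carries canonical descent data $\{\phi_g\colon g^*M_{K'}\xrightarrow{\cong}M_{K'}\}_{g\in\Gamma}$. For each $g$ the pullback $g^*M'^{\mr{log}}$ is a log $1$-motive over $S'$ extending $g^*M_{K'}\cong M_{K'}$, so uniqueness over $S'$ provides a canonical isomorphism $g^*M'^{\mr{log}}\cong M'^{\mr{log}}$; this yields descent data on $M'^{\mr{log}}$, whose cocycle condition holds over $S'$ since it holds on the generic fibre and restriction to the generic fibre is faithful. As $Y'$, $\mc{G}'$ and $\mc{G}'_{\mr{log}}$ are all sheaves for the Kummer flat (a fortiori Kummer \'etale) topology by Lemma \ref{lem2.2} and Corollary \ref{cor2.1}, the descent is effective and produces a k\'et log $1$-motive $M^{\mr{log}}$ over $S$ with pullback $M'^{\mr{log}}$; its generic fibre is $M_K$ because over $\eta$ the log structure is trivial, whence $\Gmlb|_{\eta}=0$ and $G_{\mr{log}}|_{\eta}=G_K$. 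Uniqueness of $M^{\mr{log}}$ follows from uniqueness over $S'$ and faithfulness of pullback along $S'\to S$.

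For the equivalence, faithfulness is immediate, since a morphism of k\'et log $1$-motives is determined via Lemma \ref{lem2.4} by maps of lattices and of semi-abelian schemes, each determined by its generic fibre; fullness follows by base-changing a morphism $M_K\to N_K$ to $K'$, extending it over $S'$ by the functoriality above, and descending it by the same uniqueness argument. For essential surjectivity, any k\'et log $1$-motive over $S$ trivializes to a classical log $1$-motive over some Kummer \'etale cover, which is a tamely ramified extension; the generic fibre of a log $1$-motive has $Y$ and $G$ of good reduction by definition, so the generic fibre $M_K$ of the given object is tamely ramified and strict, and $K\acute{e}t(M_K)$ recovers it by uniqueness. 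The main obstacle is the descent step: the real work is to propagate the Galois descent data from the generic fibre to the integral models and to verify the cocycle condition and effectivity there, and this is exactly where the uniqueness of the good-reduction extension over $S'$ together with the sheaf property of the log augmentation for the Kummer topologies (Corollary \ref{cor2.1}) are indispensable, while the identification of tame extensions with Kummer \'etale covers is what converts Galois descent into k\'et descent.
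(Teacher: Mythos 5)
Your overall architecture --- reduce to the good-reduction case over a tame extension $K'/K$, observe that $S'=\Spec R'$ with its canonical log structure is a finite Galois Kummer \'etale cover of $S$, and then descend --- is sound, and in fact it mirrors the paper's own strategy: the paper likewise constructs the k\'et lattice, k\'et torus and k\'et abelian scheme by quotient constructions along $S'\to S$, and your preliminary reductions (semi-stable plus potentially good forces good reduction of $B_{K'}$; enlarge $K'$ so that it is Galois and the torus splits) are correct. The descent formalism is also fine as far as it goes: sheaf descent along a cover in the Kummer \'etale site is effective, and Lemma \ref{lem2.4} lets you transfer descent data from $\mc{G}'_{\mr{log}}$ down to $\mc{G}'$, so the descended object is again a k\'et log 1-motive.

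The genuine gap is the step you treat as an input: that the good-reduction 1-motive $M_{K'}$ extends, \emph{uniquely}, to a log 1-motive $[Y'\to\mc{G}'_{\mr{log}}]$ over $S'$. You justify this by appeal to \cite{k-k-n2} ``(cf.\ Corollary \ref{cor3.1})'' and the phrase that ``the log augmentation absorbs the degeneration into $\Gmlb$''. But Corollary \ref{cor3.1} is deduced in the paper \emph{from} Theorem \ref{thm3.1}, so invoking it here is circular, and \cite{k-k-n2} does not contain this statement (the paper remarks after Corollary \ref{cor3.1} that it ``must have been known'' to Kato, precisely because it is nowhere proved in the literature). This extension-with-uniqueness over $S'$ is the technical core of the whole theorem, and your descent step leans on it twice: existence gives you $M'^{\mr{log}}$, and uniqueness is what produces the canonical isomorphisms $g^*M'^{\mr{log}}\cong M'^{\mr{log}}$ constituting the descent datum. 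The paper fills this core in by rewriting $M_{K'}$ in Deligne's biextension form $(v_K,v_K^{\vee},s_K)$, extending $v_K$ and $v_K^{\vee}$ integrally via the N\'eron property (Lemma \ref{lem3.4}), and then proving that the trivialization $s_K$ of the pulled-back Poincar\'e biextension extends to a section $s^{\mr{log}}$ valued in $P^{\mr{log}}$; that lemma is where the actual logarithmic input enters, namely $\Gml(S')=\Gm(\Spec K')$ together with $H^1_{\mr{kfl}}(S',\Gml)=H^1_{\mr{fl}}(S',\Gml)=0$ (Kato's log Hilbert 90 plus the computation with $i'_*\Z$), which yield $E^{\mr{log}}(S')\xrightarrow{\cong}E_K(\Spec K')$ and hence the desired Galois-equivariant section. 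Equivalently, in your language one must prove $\mc{G}'_{\mr{log}}(S')\xrightarrow{\cong}G_{K'}(\Spec K')$; some argument of this kind has to appear, and nothing in your proposal supplies it.
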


Before going to the proof of Theorem \ref{thm3.1}, we treat some special cases in the first few subsections.

\subsection{Extending tamely ramified lattices into k\'et lattices}\label{subsec3.1}
For any positive integer $n$, let $S_n$ be the fiber product $S\times_{\Spec\Z[\N]}\Spec\Z[\N]$, where $\Spec\Z[\N]$ is endowed with the log structure associated to the canonical homomorphism $\N\to\Z[\N]$ and the map $\Spec\Z[\N]\to\Spec\Z[\N]$ is induced by the multiplication by $n$ map on the monoid $\N$. The canonical map $S_n\to S$ is a finite Kummer flat cover, and it is even a finite Kummer \'etale cover if $n$ is invertible on $S$. Let $R_n:=R[T]/(T^n-\pi)$. It is easy to see that $R_n$ is also a complete discrete valuation ring and $S_n$ is nothing but $\Spec R_n$ endowed with the log structure associated to $\N\to R_n,1\mapsto T$.

\begin{lem}\label{lem3.1}
Let $H$ be a k\'et lattice (resp. k\'et torus, resp. k\'et abelian scheme) over $S$. Then 
\begin{enumerate}[(1)]
\item there exists a positive integer $n$ such that $n$ is invertible on $S$ and $H\times_SS_n$ is a classical lattice (resp. classical torus, resp. classical abelian scheme) over $S_n$;
\item the generic fiber $H\times_S\Spec K$ is a classical lattice (resp. classical torus, resp. classical abelian variety).
\end{enumerate}
\end{lem}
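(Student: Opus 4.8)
The plan is to derive both statements from the structure of Kummer \'etale covers of the complete discrete valuation ring $S$ together with effectivity of classical \'etale descent for lattices, tori, and abelian schemes. By the very definition of a k\'et lattice (resp. k\'et torus, resp. k\'et abelian scheme), there is a Kummer \'etale cover $S'\to S$ with $H\times_SS'$ a classical lattice (resp. torus, resp. abelian scheme) over $S'$; after refining we may assume $S'\to S$ is quasi-compact. The whole point is to replace $S'$ by a standard cover $S_n$.

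For part (1), I would first invoke the local structure of Kummer \'etale morphisms (see \cite{ill1}): classical-\'etale locally on the base, every Kummer \'etale cover of $S$ factors as a strict \'etale morphism followed by a standard Kummer cover extracting an $n_i$-th root of the chart, with each $n_i$ invertible on $S$ because the cover is Kummer \'etale. Using that $S=\Spec R$ is a complete, hence henselian, local ring and that $S'\to S$ is quasi-compact, one extracts a single integer $n$, invertible on $S$ (a common multiple of the $n_i$), such that the base change $S'_n:=S'\times_SS_n\to S_n$ is a classical \'etale cover. Here the fs fiber products are essential: the saturation built into $S_{n_i}\times_S^{\mathrm{fs}}S_n$ is exactly what turns the Kummer cover of index dividing $n$ into a strict finite \'etale cover of $S_n$.

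With $n$ in hand, the pullback $H\times_SS'_n=(H\times_SS')\times_{S'}S'_n$ is the pullback of the classical object $H\times_SS'$ along the morphism $S'_n\to S'$, hence is itself a classical lattice (resp. torus, resp. abelian scheme) over $S'_n$. Since $S'_n\to S_n$ is a classical \'etale cover, I would then descend $H\times_SS_n$ along it, equipped with its canonical descent datum: lattices and tori descend through the equivalence with their character groups (Lemma \ref{lem2.1} together with classical descent of \'etale locally constant sheaves), while abelian schemes descend effectively because they are projective over the base. This gives that $H\times_SS_n$ is a classical lattice (resp. torus, resp. abelian scheme) over $S_n$, proving (1).

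Finally, (2) follows from (1). Since $\pi$ is a unit in $K$, the log structure on $\eta=\Spec K$ is trivial, so $H\times_S\eta$ is merely a sheaf for the classical \'etale topology on $\eta$. By (1) its base change along the finite \'etale cover $\Spec K_n=\eta\times_SS_n\to\eta$, where $K_n=K[T]/(T^n-\pi)$ is separable over $K$ as $n$ is invertible on $S$, is classical; descending along this finite \'etale cover shows $H\times_S\Spec K$ is a classical lattice (resp. torus, resp. abelian variety) over $K$. The main obstacle throughout is the structure step of part (1): producing one integer $n$, invertible on $S$, for which $S'\times_S^{\mathrm{fs}}S_n\to S_n$ becomes classically \'etale, which relies on the local structure of Kummer \'etale covers, on the saturation in the fs fiber product, and on completeness of $R$ to globalize the strict-\'etale part; the descent steps are then routine once effectivity for abelian schemes is invoked.
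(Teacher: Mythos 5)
Your overall skeleton matches the paper's: you factor the Kummer \'etale cover through a standard cover $S_n$ with $n$ invertible (the paper cites \cite[Prop.~2.15]{niz1} for exactly this step, whereas you argue it via the local structure of Kummer \'etale morphisms plus henselianness of $R$ --- same content), you handle lattices and tori via their character groups (the paper reduces lattices to tori by Lemma \ref{lem2.1} and descends tori as affine schemes --- equivalent), and you deduce (2) from (1) by descent along the finite \'etale cover $\Spec K_n\to\Spec K$. The crux of the lemma, however, is effectivity of descent in the abelian scheme case, and there your proposal has a genuine gap.

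You assert that abelian schemes ``descend effectively because they are projective over the base.'' This is not a valid justification as stated. First, abelian schemes over a general base need not be projective (Raynaud gave counterexamples over non-normal bases); over the base relevant here --- $S'_n$ is \'etale over the complete discrete valuation ring $R_n$, hence regular --- projectivity does hold, but only by Raynaud's nontrivial theorem that abelian schemes over a normal noetherian base are projective, which you neither state nor cite. Second, even granted projectivity, effectivity of \'etale descent is not automatic: one needs either a relatively ample invertible sheaf equipped with a descent datum compatible with that of the scheme (SGA~1, Exp.~VIII, 7.8), or a finite locally free cover together with the property that finite sets of points lie in affine opens (SGA~1, Exp.~VIII, 7.6); producing a descent-compatible ample sheaf on an abelian scheme requires an argument (e.g.\ passing to a Galois closure and averaging), and your cover is a priori only an \'etale cover, not a finite one. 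The paper's proof is engineered precisely to avoid this issue: it descends $H\times_SS_n$ as an algebraic space (\'etale descent of algebraic spaces is always effective), checks that the result is proper, flat, and of finite presentation, hence an abelian algebraic space, and then invokes \cite[\S 1.1 (b)]{del2}, which says an abelian algebraic space over a trait is an abelian scheme. Your argument for part (2) is unproblematic, since over the field $K$ projectivity of abelian varieties is classical and Galois descent for quasi-projective varieties is standard; but for part (1) you must either supply Raynaud's projectivity theorem together with a correct effectivity statement for the descent, or adopt the paper's algebraic-space route.
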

\begin{proof}
(1) Let $U\to S$ be a Kummer \'etale cover such that $H\times_SU$ is a classical lattice (resp. classical torus, resp. classical abelian scheme) over $U$. By \cite[Prop. 2.15]{niz1}, by passing to a further Kummer \'etale cover of $S$ over $U$, we may assume that $U\to S$ factorizes as $U\to S_n\to S$ for some positive integer $n$ with $U\to S_n$ a classical \'etale cover and $n$ invertible on $S$. It suffices to show that $H\times_SS_n$ is representable by a classical lattice (resp. classical torus, resp. classical abelian scheme) over $S_n$. If $H\times_SU$ is a classical torus over $U$ which is affine over $U$, so is $H\times_SS_n$ by descent with respect to the classical \'etale cover $U\to S_n$. If $H\times_SU$ is a classical lattice over $U$, we are reduced to the torus case by Lemma \ref{lem2.1}. Now we assume that $H\times_SU$ is a classical abelian scheme over $U$. Since $U\to S_n$ is a classical \'etale cover, the morphism $H\times_SU\to H\times_SS_n$ is representable by surjective classical \'etale morphisms. Hence $H\times_SS_n$ is representable by an algebraic space over $S_n$ endowed with the inverse image log structure from $S_n$. By \cite[\href{https://stacks.math.columbia.edu/tag/0422}{Tag 0422}]{stacks-project}, \cite[\href{https://stacks.math.columbia.edu/tag/041W}{Tag 041W}]{stacks-project}, and \cite[\href{https://stacks.math.columbia.edu/tag/041V}{Tag 041V}]{stacks-project}, the morphism $H\times_SS_n\to S_n$ is proper, flat, and of finite presentation. Therefore $H\times_SS_n\to S_n$ is an abelian algebraic space, see \cite[\S 1.1]{del2} for the definition. Since $S_n$ is a trait, $H\times_SS_n\to S_n$ is actually an abelian scheme by \cite[\S 1.1 (b)]{del2}. This finishes the proof of part (1).

(2) Let $H_K:=H\times_S\Spec K$, $K_n:=R_n\otimes_RK$, and $H_{K_n}:=H\times_SS_n\times_{S_n}\Spec K_n$. Then $H_{K_n}=H_K\times_{\Spec K}\Spec K_n$. Since $\Spec K_n$ is a classical \'etale cover of $\Spec K$ and $H_{K_n}$ is a classical lattice (resp. classical torus, resp. classical abelian scheme), so is  $H_K$ by the same argument as in part (1).
\end{proof}

\begin{prop}\label{prop3.1}
Let $Y_K$ be a lattice over $K$, i.e. a group scheme over $K$ which is \'etale locally representable by a finite rank free abelian group. Assume that $Y_K$ is tamely ramified, then $Y_K$ extends to a unique k\'et lattice $Y$ over $S$. 

Furthermore, the association of $Y$ to $Y_K$ gives rise to an equivalence of categories
$$\mr{TameLat}_K\to\mr{K\acute{e}tLat}_S, Y_K\mapsto Y$$
with inverse $Y\mapsto Y\times_S\Spec K$, where $\mr{TameLat}_K$ denotes the category of tamely ramified lattices over $\Spec K$ and $\mr{K\acute{e}tLat}_S$ denotes the category of k\'et 
lattices over $S$. And the equivalence restricts to an equivalence from the category of unramified lattices over $\Spec K$ to the category of classical lattices over $S$.
\end{prop}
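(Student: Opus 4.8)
The plan is to identify both categories with categories of Galois representations and to transport structure along the comparison between finite tame covers of $\eta$ and finite Kummer \'etale covers of $S$. First I would check that the generic fiber functor $Y\mapsto Y_K:=Y\times_S\Spec K$ is well defined and lands in $\mr{TameLat}_K$. That it lands in lattices over $K$ is Lemma \ref{lem3.1}(2). To see the image is tamely ramified, choose by Lemma \ref{lem3.1}(1) an integer $n$ invertible on $S$ with $Y\times_SS_n$ a classical lattice over $S_n$; then $Y_K\times_KK_n$ is a classical lattice over $K_n$, and since $K_n=\mr{Frac}(R_n)$ is a tamely ramified extension of $K$, the lattice $Y_K$ acquires good reduction after the tame extension $K_n/K$. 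Thus $Y\mapsto Y_K$ defines a functor $\mr{K\acute{e}tLat}_S\to\mr{TameLat}_K$, and it remains to prove it is an equivalence with the asserted inverse.

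For full faithfulness, given k\'et lattices $Y,Y'$ I would pick a common $n$, invertible on $S$, trivializing both (Lemma \ref{lem3.1}(1)), and compute $\mr{Hom}_S(Y,Y')$ by Kummer \'etale descent along the cover $S_n\to S$ as the equalizer of
$$\mr{Hom}_{S_n}(Y\times_SS_n,Y'\times_SS_n)\rightrightarrows\mr{Hom}_{S_n\times_SS_n}(Y\times_SS_n\times_SS_n,Y'\times_SS_n\times_SS_n).$$
Over the trait $S_n$ the two entries are classical lattices, and for classical lattices over a normal connected base the restriction to the generic point is fully faithful (a homomorphism of unramified Galois representations is determined by its generic fiber); restriction to the generic fiber is moreover faithful over $S_n\times_SS_n$, whose generic fiber is $\Spec(K_n\otimes_KK_n)$. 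Hence the equalizer computes $\mr{Hom}$ of the tame $G_K$-representations $Y_K,Y'_K$ descended along the finite \'etale cover $\Spec K_n\to\Spec K$, which by \'etale descent over $K$ equals $\mr{Hom}_K(Y_K,Y'_K)$. This gives full faithfulness, and in particular uniqueness of the extension.

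For essential surjectivity I would start from a tamely ramified lattice $Y_K$, corresponding to a continuous representation $\rho\colon G_K\to\mr{GL}_r(\Z)$ with finite image (as $\mr{GL}_r(\Z)$ is discrete) on which the wild inertia acts trivially; hence $\rho$ factors through $\mr{Gal}(L/K)$ for a finite tame Galois extension $L/K$. The key point is that the normalization $S_L$ of $S$ in $L$, with its canonical log structure, is a Galois Kummer \'etale cover of $S$: this is where the comparison between tame covers of $\eta$ and Kummer \'etale covers of $S$ enters, realizing $L/K$ through an unramified part together with the covers $S_n\to S$, for which \cite[Prop. 2.15]{niz1} is exactly what is needed. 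Over $S_L$ the representation $\rho|_{G_L}$ is trivial, so $Y_K$ extends to the classical lattice $\underline{\Z^r}$ over $S_L$, and the $\mr{Gal}(L/K)$-action supplies a Kummer \'etale descent datum; by descent for sheaves on $\Sket$ this datum is effective and produces a k\'et lattice $Y$ over $S$ with $Y\times_S\Spec K\cong Y_K$.

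The hard part is exactly this last step: showing that the tame Galois extension trivializing $Y_K$ is realized by a Kummer \'etale cover of $S$ and that the resulting descent datum is effective; full faithfulness, by contrast, is routine once one reduces to classical lattices over $S_n$. Finally, for the asserted restriction, I would note that $Y_K$ is unramified precisely when $\rho$ factors through an unramified $\mr{Gal}(L_0/K)$, in which case $S_{L_0}\to S$ is a classical \'etale cover and the descent above is classical, so the resulting $Y$ is a classical lattice over $S$; conversely the generic fiber of a classical lattice over $S$ is unramified. Hence the equivalence restricts to one between unramified lattices over $K$ and classical lattices over $S$.
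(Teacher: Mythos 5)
Your proposal is correct, and its existence step coincides with the paper's: realize the finite tame Galois extension trivializing $Y_K$ as a finite Galois Kummer \'etale cover of $S$ (the paper's $S'$, your $S_L$), extend $Y_K$ to the constant sheaf there, and descend along the Galois action. Where you genuinely diverge is in proving that the functor is an equivalence. The paper does this in one stroke: by Lemma \ref{lem3.1} (1) a k\'et lattice corresponds to a finite rank free abelian group with continuous $\pi_1^{\mr{log}}(S)$-action, a tame lattice over $K$ corresponds to one with continuous $\mr{Gal}(\overline{K}/K)$-action killing wild inertia $P$, and the equivalence follows from the canonical isomorphism $\mr{Gal}(\overline{K}/K)/P\xrightarrow{\cong}\pi_1^{\mr{log}}(S)$ of \cite[\S 4.7 (b)]{ill1}. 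You instead prove full faithfulness by hand, via Kummer \'etale descent of $\mr{Hom}$ along $S_n\to S$, reduction to classical lattices over the traits $S_n$ and $S_n\times_SS_n$, and the classical fact that restriction to the generic point is fully faithful over a normal connected base; essential surjectivity is then your descent construction. In effect you decompose the fundamental-group comparison into its two constituent halves --- tame extensions yield Kummer \'etale covers (used by the paper too, for existence), and Kummer \'etale covers of $S$ are dominated by $S_n$ followed by classical \'etale covers (Lemma \ref{lem3.1} (1), i.e.\ \cite[Prop. 2.15]{niz1}) --- and re-derive the needed consequence by elementary descent. What each approach buys: the paper's is shorter and identifies both categories conceptually as representations of one profinite group; yours is more self-contained, never invoking the comparison isomorphism of \cite{ill1}, at the cost of longer bookkeeping. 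One citation quibble: \cite[Prop. 2.15]{niz1} describes how Kummer \'etale covers factor, which is the direction used in your full faithfulness step, not in your ``key point'' that $S_L\to S$ is a Galois Kummer \'etale cover; the latter rather needs the Abhyankar structure of tame extensions (an unramified extension followed by adjoining an $e$-th root of a uniformizer, $e$ prime to the residue characteristic), after which Kummer \'etaleness and the Galois property $\mr{Gal}(L/K)\times S_L\cong S_L\times_SS_L$ are direct checks. This is not a gap --- the paper asserts the corresponding fact for $S'/S$ without proof as well --- but the reference points in the wrong direction.
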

\begin{proof}
Let $K'$ be a tamely ramified finite Galois field extension of $K$ such that $Y_K\times_{\Spec K}\Spec K'$ is unramified. If necessary, by enlarging $K'$ by a further unramified extension, we may assume that $Y_K\times_{\Spec K}\Spec K'$ is constant. Let $R'$ be the integral closure of $R$ in $K'$ and $\pi'$ a uniformizer of $R'$. We endow $S':=\Spec R'$ with the log structure associated to $\N\rightarrow R',1\mapsto\pi'$. Then $S'$ is a finite Kummer \'etale Galois cover of $S$ with Galois group $\mr{Gal}(K'/K)$. Therefore $Y_K$ extends to a Kummer \'etale locally constant sheaf $Y$ on $S$.

By Lemma \ref{lem3.1} (1), any k\'et lattice becomes a classical lattice after base change to a Kummer \'etale cover $S_n\to S$ for some positive integer $n$. Therefore it corresponds to a finite rank free abelian group endowed with a continuous $\pi_1^{\mr{log}}(S)$-action, where $\pi_1^{\mr{log}}(S)$ denotes the log \'etale fundamental group, see \cite[\S 4.6]{ill1}. On the other hand, an object of $\mr{TameLat}_K$ corresponds to a finite rank free abelian group endowed with a continuous $\mr{Gal}(\overline{K}/K)$-action such that the wild inertia subgroup $P$ acts trivially, where $\overline{K}$ denotes a separable closure of $K$. By \cite[\S4.7 (b)]{ill1}, we have a canonical isomorphism $\mr{Gal}(\overline{K}/K)/P\xrightarrow{\cong}\pi_1^{\mr{log}}(S)$. It follows that the functor
$$\mr{TameLat}_K\to\mr{K\acute{e}tLat}_S, Y_K\mapsto Y$$
is an equivalence of categories. Clearly, the equivalence restricts to an equivalence from the category of unramified lattices over $\Spec K$ to the category of classical lattices over $S$.
\end{proof}

\begin{ex}\label{ex3.1}
Let $Y_K\in \mr{TameLat}_K$ which is not unramified. Then it extends to a k\'et lattice over $S$ which is not a classical lattice by Proposition \ref{prop3.1}.
\end{ex}

\subsection{Extending tamely ramified tori into k\'et tori}\label{subsec3.2}
\begin{prop}\label{prop3.2}
Let $T_K$ be a torus over $K$. Assume that $T_K$ is tamely ramified, i.e. there exists a tamely ramified finite field extension $K'$ of $K$ such that $T_K\times_KK'$ has good reduction. Then $T_K$ extends to a k\'et torus $T$ over $S$.

Furthermore, the association of $T$ to $T_K$ gives rise to an equivalence of categories
$$\mr{TameTor}_K\to\mr{K\acute{e}tTor}_S, T_K\mapsto T$$
with inverse $T\mapsto T\times_S\Spec K$, where $\mr{TameTor}_K$ denotes the category of tamely ramified tori over $\Spec K$ and $\mr{K\acute{e}tTor}_S$ denotes the category of k\'et 
tori over $S$. And the equivalence restricts to an equivalence from the category of unramified tori over $\Spec K$ to the category of classical tori over $S$.
\end{prop}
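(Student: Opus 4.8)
The plan is to reduce everything to the lattice case of Proposition~\ref{prop3.1} by passing to character groups, using the anti-equivalence of Lemma~\ref{lem2.1}. First I would set up the dictionary between tori and their character lattices over $K$. Writing $X_K:=\mc{H}om_K(T_K,\Gm)$ for the character group, the classical anti-equivalence between tori and \'etale locally constant sheaves of finitely generated free abelian groups identifies a torus over $K$ with its character lattice. The key point to check is that this dictionary matches the two notions of tameness: over a tame extension $K'$, the torus $T_K\times_KK'$ has good reduction if and only if its character lattice is unramified, since good reduction of a torus over $K'$ is equivalent to the inertia acting trivially on the character group, i.e.\ to the character lattice extending to a lattice over the ring of integers of $K'$. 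Because forming character groups commutes with the base change $\Spec K'\to\Spec K$, the torus $T_K$ is tamely ramified in the sense of the statement precisely when $X_K$ is tamely ramified in the sense of Proposition~\ref{prop3.1}. Thus $T_K\mapsto X_K$ is an anti-equivalence $\mr{TameTor}_K\to\mr{TameLat}_K$ carrying unramified tori to unramified lattices.

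Next I would apply Proposition~\ref{prop3.1} to extend $X_K$ to a unique k\'et lattice $X$ over $S$, and then set $T:=\mc{H}om_{S_{\mr{k\acute{e}t}}}(X,\Gm)$, which is a k\'et torus over $S$ by Lemma~\ref{lem2.1}. To see that $T$ extends $T_K$, I would verify that restriction to the generic fibre is compatible with forming $\mc{H}om(-,\Gm)$: by Proposition~\ref{prop3.1} the generic fibre of $X$ is $X_K$, and since $\Gm$ restricts to $\Gm$ over $\Spec K$, Lemma~\ref{lem2.1} applied over $K$ gives $T\times_S\Spec K=\mc{H}om_K(X_K,\Gm)=T_K$. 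Uniqueness of $T$ then follows from the uniqueness in Proposition~\ref{prop3.1} together with the faithfulness of the character-group construction (Lemma~\ref{lem2.1}), using that any k\'et torus over $S$ is determined by its k\'et character lattice.

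For the equivalence of categories, I would assemble the three constituent (anti-)equivalences
$$\mr{TameTor}_K\xrightarrow{\ \sim\ }\mr{TameLat}_K\xrightarrow{\ \sim\ }\mr{K\acute{e}tLat}_S\xrightarrow{\ \sim\ }\mr{K\acute{e}tTor}_S,$$
where the outer two arrows are the character-group anti-equivalences of Lemma~\ref{lem2.1} (over $K$ and over $S$) and the middle arrow is the equivalence of Proposition~\ref{prop3.1}. The composite $K\acute{e}t$ is covariant, being a composition of two contravariant functors and one covariant functor. The generic-fibre computation above shows that $T\mapsto T\times_S\Spec K$ is the inverse functor, and the equivalence restricts to one between unramified tori over $K$ and classical tori over $S$ because each of the three constituent equivalences respects the corresponding subcategories: Lemma~\ref{lem2.1} restricts to the classical equivalence between classical tori and \'etale locally constant lattices, and the last sentence of Proposition~\ref{prop3.1} handles the middle step.

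The main obstacle I expect is precisely the comparison of the two notions of tameness in the first paragraph, namely verifying cleanly that good reduction of a torus over $K'$ is equivalent to unramifiedness of its character lattice and that this equivalence is compatible with the tame base change $K\to K'$ and with the k\'et descent used to produce $X$ over $S$ via Lemma~\ref{lem3.1}. Once this dictionary between tori and character lattices is established, the remainder is a formal composition of the already-proved equivalences of Lemma~\ref{lem2.1} and Proposition~\ref{prop3.1}.
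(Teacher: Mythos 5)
Your proposal is correct and takes essentially the same route as the paper: the paper's proof is a one-liner observing that $T_K\mapsto X_K=\mc{H}om_{(\Spec K)_{\mr{\acute{e}t}}}(T_K,\Gm)$ identifies $\mr{TameTor}_K$ with $\mr{TameLat}_K$, after which everything follows from Lemma \ref{lem2.1} and Proposition \ref{prop3.1}. Your write-up simply fills in the details the paper leaves implicit (the matching of the two tameness notions and the generic-fibre compatibility), so there is no substantive difference in method.
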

\begin{proof}
Since the functor 
$$\mr{TameTor}_K\to\mr{TameLat}_K,T_K\mapsto X_K:=\mc{H}om_{(\Spec K)_{\mr{\acute{e}t}}}(T_K,\Gm)$$
is an equivalence of categories, the result follows from Lemma \ref{lem2.1} and Proposition \ref{prop3.1}.
\end{proof}

\begin{ex}\label{ex3.2}
Let $T_K\in\mr{TameTor}_K$ which is not unramified. Then it extends to a unique k\'et torus over $S$ which is not a classical torus by Proposition \ref{prop3.2}.
\end{ex}

\subsection{Extending tamely ramified abelian varieties with potentially good reduction into k\'et abelian schemes}\label{subsec3.3}
Let $B_K$ be a tamely ramified abelian variety over $K$ which has potentially good reduction, and let $K'$ be a tamely ramified finite Galois field extension of $K$ such that $B_{K'}:=B_K\times_{\Spec K}\Spec K'$ has good reduction. Let $R'$ be the integral closure of $R$ in $K'$. Then $B_{K'}$ extends to an abelian scheme $B'$ over $S':=\Spec R'$. Let $\pi'$ be a uniformizer of $R'$, and we endow $S'$ with the log structure associated to $\N\rightarrow R',1\mapsto\pi'$. Then $S'$ is a finite Galois Kummer \'etale cover of $S$ with Galois group $\Gamma:=\mr{Gal}(K'/K)$. Let $\rho:\Gamma\times S'\rightarrow S'$ be the canonical action of $\Gamma$ on $S'$. Then the morphism $(\rho,\mr{pr}_2):\Gamma\times S'\rightarrow S'\times_SS'$ is an isomorphism. By \cite[\S 1.2, Prop. 8]{b-l-r1}, $B'$ is the N\'eron model of $B_{K'}$. By the universal property of N\'eron model, the $\Gamma$-action on $B_{K'}$ extends to a unique $\Gamma$-action
\begin{equation}\label{eq3.1}
\tilde{\rho}:\Gamma\times B'\rightarrow B'
\end{equation}
on $B'$ which is compatible with the $\Gamma$-action $\rho$ on $S'$ and the group structure of $B'$. We endow $B'$ with the induced log structure from $S'$.

Let $p'$ denote the structure morphism $B'\rightarrow S'$, $\alpha$ denote the morphism $S'\rightarrow S$, and $p:=\alpha\circ p'$. For any $U\in(\mr{fs}/S)$ and any $(a,b)\in (B'\times_SB')(U)$, we have $\alpha(p'(a))=\alpha(p'(b))$. Hence there exists a unique $\gamma\in\Gamma$ such that $p'(a)=\rho(\gamma,p'(b))$. Since $p'(\tilde{\rho}(\gamma,b))=\rho(\gamma,p'(b))=p'(a)$, we get $(a,\tilde{\rho}(\gamma,b))\in (B'\times_{S'}B')(U)$. We define a morphism 
$$\Phi: B'\times_SB'\rightarrow \Gamma\times (B'\times_{S'}B')$$
by sending $(a,b)$ to $(\gamma^{-1},(a,\tilde{\rho}(\gamma,b))$. 

\begin{lem}
The morphism $\Phi$ is an isomorphism with inverse 
$$\Psi:\Gamma\times (B'\times_{S'}B')\rightarrow B'\times_SB',\quad (\gamma,(a,b))\mapsto (a,\tilde{\rho}(\gamma,b))$$
for any $U\in(\mr{fs}/S)$, any $(a,b)\in (B'\times_{S'}B')(U)$, and any $\gamma\in\Gamma$.
\end{lem}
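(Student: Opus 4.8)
The plan is to check the assertion on the functor of points, since both $\Phi$ and $\Psi$ are described functorially on $U$-valued points for $U\in(\mr{fs}/S)$. First I would make precise why $\Phi$ is a well-defined morphism. The only non-formal ingredient is the extraction of $\gamma$ from $(a,b)$: given $(a,b)\in(B'\times_SB')(U)$, the pair $(p'(a),p'(b))$ lies in $(S'\times_SS')(U)$, and since $(\rho,\mr{pr}_2):\Gamma\times S'\to S'\times_SS'$ is an isomorphism, composing its inverse with the projection to $\Gamma$ yields a morphism $d:S'\times_SS'\to\Gamma$ sending $(s_1,s_2)$ to the unique $\gamma$ with $s_1=\rho(\gamma,s_2)$. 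Thus $\Phi$ is the composite $(a,b)\mapsto\bigl(d(p'(a),p'(b))^{-1},\,(a,\tilde\rho(d(p'(a),p'(b)),b))\bigr)$, a genuine morphism assembled from $p'$, $d$, $\tilde\rho$ and inversion in $\Gamma$. I would also confirm that the target point lands in $B'\times_{S'}B'$: by the defining property of $\gamma$ together with the compatibility $p'\circ\tilde\rho=\rho\circ(1_\Gamma\times p')$, one gets $p'(\tilde\rho(\gamma,b))=\rho(\gamma,p'(b))=p'(a)$. Symmetrically, $\Psi$ is visibly a morphism, and for $(\gamma,(a,b))\in\Gamma\times(B'\times_{S'}B')(U)$ the image $(a,\tilde\rho(\gamma,b))$ lies in $B'\times_SB'$, because $p(\tilde\rho(\gamma,b))=\alpha(\rho(\gamma,p'(b)))=\alpha(p'(b))=p(a)$, using that $\rho$ is a morphism over $S$ and that $p'(a)=p'(b)$.

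Next I would verify the two composites on $U$-points. For $\Psi\circ\Phi$, starting from $(a,b)\in(B'\times_SB')(U)$ with associated $\gamma$, we have $\Phi(a,b)=(\gamma^{-1},(a,\tilde\rho(\gamma,b)))$, and then $\Psi$ returns $(a,\tilde\rho(\gamma^{-1},\tilde\rho(\gamma,b)))=(a,b)$ by the action identity $\tilde\rho(\gamma^{-1},\tilde\rho(\gamma,-))=\mr{id}$. For $\Phi\circ\Psi$, starting from $(\gamma,(a,b))\in\Gamma\times(B'\times_{S'}B')(U)$ with $p'(a)=p'(b)$, set $c:=\tilde\rho(\gamma,b)$, so $\Psi(\gamma,(a,b))=(a,c)$. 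Applying $\Phi$ requires the unique $\delta\in\Gamma$ with $p'(a)=\rho(\delta,p'(c))$; since $p'(c)=\rho(\gamma,p'(b))=\rho(\gamma,p'(a))$, this forces $\rho(\delta\gamma,p'(a))=p'(a)$, hence $\delta=\gamma^{-1}$. Then $\Phi(a,c)=(\gamma,(a,\tilde\rho(\gamma^{-1},\tilde\rho(\gamma,b))))=(\gamma,(a,b))$, again by the action identity.

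The main point to watch, and the only place where anything beyond formal bookkeeping enters, is the uniqueness of $\gamma$ (equivalently of $\delta$); this is precisely why one records at the outset that $(\rho,\mr{pr}_2)$ is an isomorphism, so that the stabilizer condition $\rho(\delta\gamma,p'(a))=p'(a)$ forces $\delta\gamma=e$. The remaining content is just the two group-action identities for $\tilde\rho$ and its compatibility with $\rho$ over $S$. Since all maps in sight are morphisms of fs log schemes and the verifications above are natural in $U$, Yoneda's lemma upgrades these pointwise inverse relations to the stated isomorphism $\Phi$ with inverse $\Psi$.
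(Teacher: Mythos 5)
Your proof is correct and follows the same route as the paper, which simply asserts that $\Phi$ and $\Psi$ are inverse to each other ("Clearly $\Phi$ and $\Psi$ are inverse to each other"); you have merely filled in the pointwise verification, including the functorial extraction of $\gamma$ via the inverse of $(\rho,\mr{pr}_2)$ and the freeness argument forcing $\delta\gamma=e$. Nothing in your argument deviates from or adds assumptions beyond what the paper uses, so it is a faithful, more detailed version of the paper's proof.
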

\begin{proof}
Clearly $\Phi$ and $\Psi$ are inverse to each other.
\end{proof}

\begin{lem}\label{lem3.3}
The canonical morphism 
\begin{equation}
(\tilde{\rho},\mr{pr}_2):\Gamma\times B'\rightarrow B'\times_SB'
\end{equation}
is a monomorphism of sheaves on $(\mr{fs}/S)_{\mr{k\acute{e}t}}$.
\end{lem}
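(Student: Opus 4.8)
The plan is to transport the question through the isomorphism $\Phi$ from the preceding lemma and thereby reduce it to the statement that the diagonal of $B'$ over $S'$ is a monomorphism. All the objects in sight (finite disjoint unions of copies of $B'$, and their fiber products) are representable by fs log schemes, hence are sheaves on $(\mr{fs}/S)_{\mr{k\acute{e}t}}$ by Lemma \ref{lem2.2}, and a monomorphism of schemes induces a monomorphism of the associated representable sheaves; so it suffices to argue scheme-theoretically. Since $\Phi\colon B'\times_SB'\xrightarrow{\cong}\Gamma\times(B'\times_{S'}B')$ is an isomorphism, the morphism $(\tilde{\rho},\mr{pr}_2)$ is a monomorphism if and only if the composite $\Phi\circ(\tilde{\rho},\mr{pr}_2)$ is one, so first I would compute this composite explicitly.

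For a section $(\gamma,b)$ of $\Gamma\times B'$ over some $U\in(\mr{fs}/S)$ we have $(\tilde{\rho},\mr{pr}_2)(\gamma,b)=(\tilde{\rho}(\gamma,b),b)$. To apply $\Phi$ one needs the unique element of $\Gamma$ carrying $p'(b)$ to $p'(\tilde{\rho}(\gamma,b))$ under $\rho$; since $p'(\tilde{\rho}(\gamma,b))=\rho(\gamma,p'(b))$, that element is $\gamma$ itself, whence $\Phi(\tilde{\rho}(\gamma,b),b)=(\gamma^{-1},(\tilde{\rho}(\gamma,b),\tilde{\rho}(\gamma,b)))$. Thus $\Phi\circ(\tilde{\rho},\mr{pr}_2)$ lands in the diagonal of the $\gamma^{-1}$-indexed copy of $B'\times_{S'}B'$, at the point $\Delta(\tilde{\rho}(\gamma,b))$, where $\Delta\colon B'\to B'\times_{S'}B'$ denotes the diagonal.

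Next I would package this as a factorization. Define $\Theta\colon\Gamma\times B'\to\Gamma\times B'$ by $(\gamma,b)\mapsto(\gamma^{-1},\tilde{\rho}(\gamma,b))$; using that $\tilde{\rho}$ is a group action one checks directly that $\Theta$ is an involution, hence an isomorphism of sheaves. The computation above then reads
\[
\Phi\circ(\tilde{\rho},\mr{pr}_2)=(\mr{id}_\Gamma\times\Delta)\circ\Theta.
\]
Because $\Phi$ and $\Theta$ are isomorphisms, $(\tilde{\rho},\mr{pr}_2)$ is a monomorphism if and only if $\mr{id}_\Gamma\times\Delta$ is, and the latter (a disjoint union of copies of $\Delta$ over the finite index set $\Gamma$) is a monomorphism if and only if $\Delta$ is.

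Finally, $\Delta\colon B'\to B'\times_{S'}B'$ is a monomorphism because $B'\to S'$ is separated (being an abelian scheme), so its diagonal is a closed immersion and in particular a monomorphism, and this persists after passing to representable sheaves on $(\mr{fs}/S)_{\mr{k\acute{e}t}}$. This finishes the proof. The computation is routine once $\Phi$ is available; the only points needing care are the bookkeeping of the $\Gamma$-index under $\Phi$—which is precisely where the freeness of the $\Gamma$-action, already encoded in $\Phi$ being an isomorphism, is used—and the observation that separatedness of the abelian scheme $B'/S'$ is the geometric input making $\Delta$ a monomorphism.
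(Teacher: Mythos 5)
Your proposal is correct and takes essentially the same route as the paper: transport $(\tilde{\rho},\mr{pr}_2)$ through the isomorphism $\Phi$ and reduce to the diagonal $\Delta_{B'/S'}$ being a monomorphism; the paper merely inserts the factor-swap $\iota:B'\times_SB'\to B'\times_SB'$ before applying $\Phi$, so that the composite equals $1_\Gamma\times\Delta_{B'/S'}$ on the nose, which makes your auxiliary involution $\Theta$ unnecessary. One minor remark: invoking separatedness of $B'/S'$ is overkill, since any diagonal is a section of a projection and hence automatically a (split) monomorphism of sheaves.
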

\begin{proof}
The composition
$$\Gamma\times B'\xrightarrow{(\tilde{\rho},\mr{pr}_2)} B'\times_SB'\xrightarrow {\iota}B'\times_SB'\xrightarrow {\Phi}\Gamma\times (B'\times_{S'}B')$$
is identified with the morphism $1_{\Gamma}\times\Delta_{B'/S'}$, where $\iota$ denotes the morphism switching the two factors and $\Delta_{B'/S'}$ denotes the diagonal embedding. Therefore the result follows.
\end{proof}

By \cite[\href{https://stacks.math.columbia.edu/tag/0234}{Tag 0234}]{stacks-project}, the action $\tilde{\rho}$ defines a groupoid scheme over $S$, hence by \cite[\href{https://stacks.math.columbia.edu/tag/0232}{Tag 0232}]{stacks-project} the morphism 
$$(\tilde{\rho},\mr{pr}_2):\Gamma\times B'\rightarrow B'\times_SB'$$
is a pre-equivalence relation. Moreover, $(\tilde{\rho},\mr{pr}_2)$ is an equivalence relation by Lemma \ref{lem3.3}. The morphism 
$$(\rho,\mr{pr}_2):\Gamma\times S'\rightarrow S'\times_SS'$$
being an isomorphism is clearly an equivalence relation.

Now we are following \cite[\href{https://stacks.math.columbia.edu/tag/02VE}{Tag 02VE}]{stacks-project} to construct a k\'et abelian scheme over $S$. We remark that 
although the setup there does not agree with ours, the proofs there work verbatim in our case.

Following the approach of \cite[\href{https://stacks.math.columbia.edu/tag/02VG}{Tag 02VG}]{stacks-project}, we take the quotient sheaves for the equivalence relations $(\tilde{\rho},\mr{pr}_2)$ and $(\rho,\mr{pr}_2)$ on the site $(\mr{fs}/S)_{\mr{k\acute{e}t}}$. Since $(\rho,\mr{pr}_2)$ is an isomorphism, the corresponding quotient sheaf is representable by the terminal object $S$. Let $B=K\acute{e}t(B_K)$ be the quotient sheaf for the equivalence relation $(\tilde{\rho},\mr{pr}_2)$. Since the two equivalence relations are compatible with each other, we get a morphism $B\rightarrow S$. Since the equivalence relation $(\tilde{\rho},\mr{pr}_2)$ is compatible with the group structure of $B'$, the quotient sheaf $B'$ carries a structure of sheaf of abelian groups. The verbatim translations of the proof of \cite[\href{https://stacks.math.columbia.edu/tag/045Y}{Tag 045Y}]{stacks-project} and the proof of \cite[\href{https://stacks.math.columbia.edu/tag/07S3}{Tag 07S3}]{stacks-project} show that 
\begin{equation}\label{eq3.3}
\Gamma\times B'\xrightarrow{\cong}B'\times_BB'
\end{equation}
and 
\begin{equation}\label{eq3.4}
B'\xrightarrow{\cong}B\times_SS'
\end{equation}
respectively, hence $B$ is a k\'et abelian scheme over $S$. 

Let $C_K$ be another tamely ramified abelian variety over $K$ which has potentially good reduction, and let $f_K:B_K\to C_K$ be a homomorphism of group schemes over $K$. By enlarging the field $K'$ such that $K'$ remains a finite tamely ramified extension of $K$, we may assume that $C_{K'}:=C_K\times_{\Spec K}\Spec K'$ extends to an abelian scheme $C'$ over $S'$. By the same reason as for $B_K$, we have a natural action of $\Gamma$ on $C'$ which is compatible with the natural action of $\Gamma$ on $S'$ and the quotient for the action gives rise to a k\'et abelian scheme over $S$. Let $f_{K'}$ be the base change of $f_K$ to $K'$. Then it extends to a unique homomorphism $f':B'\to C'$ of group schemes over $S'$ by the universal property of N\'eron model. For any $\gamma\in\Gamma$, the compatibility of the canonical actions of $\Gamma$ on $B_{K'}$ and $C_{K'}$ gives rise to a commutative diagram
$$\xymatrix{
&C_{K'}\ar[r]^\gamma\ar[ldd]|!{[ld];[d]}\hole &C_{K'}\ar[ldd] \\
B_{K'}\ar[r]^(.4)\gamma\ar[ru]^{f_{K'}}\ar[d] &B_{K'}\ar[ru]^{f_{K'}}\ar[d] \\
\Spec K'\ar[r]_\gamma &\Spec K'
}.$$
Hence the following diagram
$$\xymatrix{
&C'\ar[r]^\gamma\ar[ldd]|!{[ld];[d]}\hole &C'\ar[ldd] \\
B'\ar[r]^(.4)\gamma\ar[ru]^{f'}\ar[d] &B'\ar[ru]^{f'}\ar[d] \\
S'\ar[r]_\gamma &S'
}$$
is commutative by the universal property of N\'eron model for any $\gamma\in\Gamma$. It follows that the $\Gamma$-actions on $B'$ and $C'$ are compatible with each other, hence $f'$ induces a homomorphism $f:B\to C$ of k\'et abelian schemes over $S$, where $C=K\acute{e}t(C_K)$ is the k\'et abelian scheme for $C_K$ defined in the same way as $B=K\acute{e}t(B_K)$ for $B_K$.

Therefore we get a functor
$$K\acute{e}t:\mr{TameStAb}_K\rightarrow\mr{K\acute{e}tAb}_S,\quad B_K\mapsto K\acute{e}t(B_K)$$
from the category of tamely ramified abelian varieties with potentially good reduction over $K$ to the category of k\'et abelian schemes over $S$.

\begin{thm}\label{thm3.2}
\begin{enumerate}[(1)]
\item Let $B$ and $C$ be two k\'et abelian schemes over $S$, and let $B_K:=B\times_S\Spec K$ and $C_K:=C\times_S\Spec K$. Then the restriction map 
$$\mr{Hom}_S(B,C)\to\mr{Hom}_{\Spec K}(B_K,C_K)$$
is an isomorphism.
\item The functor
$$K\acute{e}t:\mr{TameStAb}_K\rightarrow\mr{K\acute{e}tAb}_S,\quad B_K\mapsto K\acute{e}t(B_K)$$
is an equivalence of categories, and it restricts to an equivalence from the category of abelian varieties with good reduction over $K$ to the category of abelian schemes over $S$.
\end{enumerate}
\end{thm}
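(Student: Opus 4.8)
The plan is to treat part (1) as the technical core---it is precisely the full faithfulness of the functor $K\acute{e}t$---and to deduce essential surjectivity in part (2) from it together with Lemma \ref{lem3.1}. The single tool driving everything is the universal property of N\'eron models, applied after base change along the Kummer \'etale cover $S_n\to S$ on which the two k\'et abelian schemes become classical.

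For part (1), I would first invoke Lemma \ref{lem3.1}(1) to choose a positive integer $n$, invertible on $S$, such that both $B\times_SS_n$ and $C\times_SS_n$ are classical abelian schemes over $S_n=\Spec R_n$. Since $R_n$ is a discrete valuation ring, these abelian schemes are the N\'eron models of their generic fibres $B_{K_n}$ and $C_{K_n}$, where $K_n=\mr{Frac}(R_n)$. Given a homomorphism $f_K\colon B_K\to C_K$, its base change $f_{K_n}$ extends uniquely to $f_n\colon B\times_SS_n\to C\times_SS_n$ over $S_n$ by the N\'eron mapping property. It then remains to descend $f_n$ along $S_n\to S$. Since $\mc{H}om(-,C)$ is a k\'et sheaf, this amounts to checking that $f_n$ is compatible with the descent isomorphisms exhibiting $B$ and $C$ over $S$; these isomorphisms live over $S_n\times_SS_n$ and restrict on the generic fibre to the canonical ones. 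Because $f_K$ is compatible with the generic descent data, and because a homomorphism of the relevant abelian schemes over the traits lying over $S$ is determined by its generic fibre, the two transports of $f_n$ across $S_n\times_SS_n$ agree; hence $f_n$ descends to $f\colon B\to C$ restricting to $f_K$. Injectivity of the restriction map follows from the same rigidity: an $S$-homomorphism trivial on $B_K$ becomes, over $S_n$, an extension of the zero map, hence zero.

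Turning to part (2), full faithfulness of $K\acute{e}t$ is exactly part (1): the generic-fibre functor $B\mapsto B\times_S\Spec K$ is inverse to $K\acute{e}t$ on objects (by the isomorphism $B'\cong B\times_SS'$ used in the construction, whose generic fibre recovers $B_K$), and (1) identifies the corresponding Hom-sets. For essential surjectivity, let $B$ be an arbitrary k\'et abelian scheme over $S$. By Lemma \ref{lem3.1}(2) its generic fibre $B_K$ is a classical abelian variety, and by Lemma \ref{lem3.1}(1) there is an $n$ invertible on $S$ with $B\times_SS_n$ a classical abelian scheme; thus $B_{K_n}$ has good reduction over the totally tamely ramified extension $K_n/K$, so $B_K\in\mr{TameStAb}_K$. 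Enlarging $K_n$ by an unramified (hence still tamely ramified) extension, I may take a finite Galois cover $K'/K$ over which $B$ becomes a classical abelian scheme $B'$. As $B'$ is then the N\'eron model of $B_{K'}$, the descent isomorphisms realising $B$ over $S$ coincide, by generic-fibre uniqueness, with the $\Gamma$-action used to construct $K\acute{e}t(B_K)$. Comparing the two quotient sheaves for this one equivalence relation gives a canonical isomorphism $K\acute{e}t(B_K)\xrightarrow{\cong}B$.

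Finally, the restriction statement is immediate: if $B_K$ has good reduction it extends to an abelian scheme over $S$, and one may take $K'=K$ in the construction, so that $K\acute{e}t(B_K)$ is this classical abelian scheme; conversely a classical abelian scheme over $S$ has generic fibre of good reduction. I expect the main obstacle to be the descent verification in part (1): making precise that compatibility of $f_K$ with the generic descent data, combined with the rigidity of homomorphisms of N\'eron models over a trait, forces $f_n$ to respect the full descent data over $S_n\times_SS_n$---and, in the surjectivity step, the analogous identification of the descent isomorphisms of $B$ with the equivalence relation $(\tilde{\rho},\mr{pr}_2)$ defining $K\acute{e}t(B_K)$.
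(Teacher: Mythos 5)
Your proposal is correct and takes essentially the same route as the paper: Lemma \ref{lem3.1} to pass to the Kummer \'etale cover $S_n$, the N\'eron mapping property over $S_n$ and over the traits underlying $S_n\times_SS_n$, and Kummer \'etale descent for homomorphism sheaves --- your extend-then-descend argument for part (1) is exactly the paper's diagram chase (two exact descent rows with the middle and right verticals isomorphisms by the N\'eron property) unfolded. The only divergence is in essential surjectivity, where the paper simply applies part (1) to extend $\mr{id}_{B_K}$ to a unique isomorphism $K\acute{e}t(B_K)\xrightarrow{\cong}B$, whereas you re-identify the descent data of $B$ over a Galois cover with the action $\tilde{\rho}$ used to construct $K\acute{e}t(B_K)$; both are valid, the paper's shortcut just being quicker.
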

\begin{proof}
(1) By Lemma \ref{lem3.1}, there exists a positive integer $n$ such that both $B_n:=B\times_SS_n$ and $C_n:=C\times_SS_n$ are classical abelian schemes over $S_n$, where $S_n$ is as defined in the beginning of Subsection \ref{subsec3.1}. Let $\tilde{S}_n:=S_n\times_SS_n$, $\tilde{B}_n:=B\times_S\tilde{S}_n$, $\tilde{C}_n:=C\times_S\tilde{S}_n$, $K_n:=S_n\times_S\Spec K$, $\tilde{K}_n:=\tilde{S}_n\times_S\Spec K$, $B_{K_n}:=B_n\times_S\Spec K$, $C_{K_n}:=C_n\times_S\Spec K$, $\tilde{B}_{K_n}:=\tilde{B}_n\times_S\Spec K$, and $\tilde{C}_{K_n}:=\tilde{C}_n\times_S\Spec K$. Since $S_n$ is a Kummer \'etale cover of $S$, we get the following commutative diagram
$$\xymatrix{
0\ar[r] &\mr{Hom}_S(B,C)\ar[r]\ar[d] &\mr{Hom}_{S_n}(B_n,C_n)\ar[r]\ar[d]^{\cong} &\mr{Hom}_{\tilde{S}_n}(\tilde{B}_n,\tilde{C}_n)\ar[d]^{\cong} \\
0\ar[r] &\mr{Hom}_K(B_K,C_K)\ar[r] &\mr{Hom}_{K_n}(B_{K_n},C_{K_n})\ar[r] &\mr{Hom}_{\tilde{K}_n}(\tilde{B}_{K_n},\tilde{C}_{K_n})
}$$
with exact rows. Here we abbreviate $\mr{Hom}_{\Spec ?}(-,-)$ as $\mr{Hom}_{?}(-,-)$ for $?=K, K_n, \tilde{K}_n$, in order to make the diagram not too big. The middle vertical map and the right vertical map are isomorphisms by the universal property of N\'eron model, it follows that the first vertical map is also an isomorphism.

(2) By part (1), the functor $K\acute{e}t$ is fully faithful. Given any $B\in \mr{K\acute{e}tAb}_S$, we have $B_K:=B\times_S\Spec K\in \mr{TameStAb}_K$ by Lemma \ref{lem3.1} (2). By part (1), the identity map on $B_K$ extends to a unique isomorphism $K\acute{e}t(B_K)\xrightarrow{\cong}B$. Hence the functor $K\acute{e}t$ is essentially surjective, and so an equivalence of categories. In fact, it is easy to see that the functor
$$(-)_K:\mr{K\acute{e}tAb}_S\to \mr{TameStAb}_K,B\mapsto B\times_S\Spec K$$
is inverse to the functor $K\acute{e}t$ by the construction of $K\acute{e}t$. If $A_K\in\mr{TameStAb}_K$ has good reduction, i.e. it extends to an abelian scheme $A$ over $S$ (which is unique by the theory of N\'eron models), then there exists a unique isomorphism $A\to K\acute{e}t(A_K)$ extending $\mr{id}_{A_K}$. Therefore $K\acute{e}t(A_K)=A$, and so the functor $K\acute{e}t$ restricts to an equivalence from the category of abelian varieties with good reduction over $K$ to the category of abelian schemes over $S$.
\end{proof}

\begin{ex}\label{ex3.3}
Let $B_K\in\mr{TameStAb}_K$ which has no good reduction. Then $K\acute{e}t(B_K)$ is a k\'et abelian scheme over $S$ which is not a classical abelian scheme by Theorem \ref{thm3.2} (2). 
\end{ex}

It is natural to investigate if the functor $K\acute{e}t$ is compatible with the dualities on both sides.

\begin{prop}
The functor $K\acute{e}t:\mr{TameStAb}_K\rightarrow\mr{K\acute{e}tAb}_S$ is compatible with the dualities, i.e. we have a canonical identification 
$$K\acute{e}t(B_K^{\vee})\cong K\acute{e}t(B_K)^{\vee}.$$
\end{prop}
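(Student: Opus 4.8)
The plan is to prove that $K\acute{e}t(B_K^{\vee})\cong K\acute{e}t(B_K)^{\vee}$ by reducing the statement to the classical duality of abelian schemes over $S'$ and then descending along the Kummer \'etale Galois cover $S'\to S$. First I would recall from Subsection \ref{subsec3.3} that $B=K\acute{e}t(B_K)$ is constructed as the quotient sheaf of $B'$ for the equivalence relation given by the $\Gamma$-action $\tilde{\rho}$, where $\Gamma=\mr{Gal}(K'/K)$ and $B'$ is the abelian scheme over $S'$ extending $B_{K'}$. The key point is that the dual $B^{\vee}=\mc{E}xt^1_{S_{\mr{kfl}}}(B,\Gm)$ from Theorem \ref{thm2.1}, being formed Kummer \'etale locally, should pull back to the classical dual $(B')^{\vee}$ over $S'$.

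The main steps, in order, are as follows. First I would verify that the formation of the k\'et dual commutes with Kummer \'etale base change, i.e. that $B^{\vee}\times_SS'\cong (B\times_SS')^{\vee}\cong (B')^{\vee}$; this follows from (\ref{eq2.1}) together with the isomorphism (\ref{eq3.4}) identifying $B'$ with $B\times_SS'$, since $\mc{E}xt^1_{S_{\mr{kfl}}}(B,\Gm)$ restricts to $\mc{E}xt^1_{S'_{\mr{fl}}}(B',\Gm)=(B')^{\vee}$ by the classical Weil--Barsotti formula. Next, since $B'$ is the N\'eron model of $B_{K'}$, its dual $(B')^{\vee}$ is the N\'eron model of $B_{K'}^{\vee}=(B_K^{\vee})_{K'}$, so $(B')^{\vee}$ is precisely the abelian scheme over $S'$ used to construct $K\acute{e}t(B_K^{\vee})$. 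The third step is to check that the $\Gamma$-action on $(B')^{\vee}$ induced by duality (via the universal property of N\'eron models, as in the construction of $\tilde{\rho}$) agrees with the descent datum on $B^{\vee}$ coming from $B$; this is a functoriality/compatibility check. Finally, since both $K\acute{e}t(B_K^{\vee})$ and $B^{\vee}$ are the quotient sheaves of the same object $(B')^{\vee}$ over $S'$ for the same equivalence relation, they are canonically isomorphic over $S$.

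The hard part will be the third step, namely matching the two $\Gamma$-actions on $(B')^{\vee}$: the one coming from dualizing $\tilde\rho$ and the one coming from restricting the descent datum of $B^{\vee}$. Here I would use the functoriality of the k\'et duality established in Theorem \ref{thm2.1}(2) (the isomorphism $\iota:B\xrightarrow{\cong}(B^{\vee})^{\vee}$ is functorial, hence equivariant for any automorphisms), so that each $\gamma\in\Gamma$ acting on $B'$ induces by duality an action on $(B')^{\vee}$ compatible with the Poincar\'e biextension, and this is exactly the action that the universal property of N\'eron models attaches to the dual abelian variety $B_{K'}^{\vee}$. Once this compatibility is in place, descent along $S'\to S$ as in Subsection \ref{subsec3.3} yields the canonical identification $K\acute{e}t(B_K^{\vee})\cong K\acute{e}t(B_K)^{\vee}$, and the identification is canonical because every morphism in sight is constructed functorially.
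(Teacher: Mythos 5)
Your proposal is correct and follows essentially the same route as the paper: restrict $B^{\vee}=\mc{E}xt^1_{S_{\mr{kfl}}}(B,\Gm)$ along $S'\to S$ and identify it with $B^{'\vee}$ using (\ref{eq2.1}) and (\ref{eq3.4}), note that $B^{'\vee}$ is exactly the abelian scheme over $S'$ used to build $K\acute{e}t(B_K^{\vee})$, and then match the descent-datum action with the dual action $\tilde{\rho}^{\vee}$. The paper settles your "hard" third step by the same underlying observation you invoke via N\'eron functoriality, namely that both $\Gamma$-actions on $\Gamma\times B^{'\vee}=\sqcup_{\gamma\in\Gamma}B^{'\vee}$ agree on the generic fiber and hence coincide.
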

\begin{proof}
Let $S'$, $\Gamma$, $B'$, and $B$ be as in (\ref{eq3.3}) and (\ref{eq3.4}). Then $B=K\acute{e}t(B_K)$. By (\ref{eq3.4}), we have 
\begin{align*}
B^{\vee}\times_SS'=&\mc{E}xt^1_{S_{\mr{kfl}}}(B,\Gm)\times_SS'=\mc{E}xt^1_{S'_{\mr{kfl}}}(B\times_SS',\Gm)  \\
=&\mc{E}xt^1_{S'_{\mr{kfl}}}(B',\Gm)=B^{'\vee}.
\end{align*}
It follows that $B^{\vee}=\mc{E}xt^1_{S_{\mr{k\acute{e}t}}}(B,\Gm)$ is the quotient sheaf for a descent data with respect to the Galois Kummer \'etale cover $S'/S$. Such a descent data is given by a group action $\tau:\Gamma\times B^{'\vee}\rightarrow B^{'\vee}$. In order to have the identification $K\acute{e}t(B_K^{\vee})\cong K\acute{e}t(B_K)^{\vee}=B^{\vee}$, we are reduced to identify the action $\tau$ with the action $\tilde{\rho}^{\vee}:\Gamma\times B^{'\vee}\rightarrow B^{'\vee}$ for $B_K^{\vee}$ which corresponds to the action (\ref{eq3.1}) for $B_K$. But this is clear, since $\Gamma\times B^{'\vee}=\sqcup_{\gamma\in\Gamma}B^{'\vee}$ and these two actions agree over the generic fiber.
\end{proof}

\subsection{Proof of Theorem \ref{thm3.1}}
In this subsection, we prove Theorem \ref{thm3.1}.

Let $v_K$ be the composition $Y_K\xrightarrow{u_K}G_K\rightarrow B_K$, $X_K$ the character group of the torus $T_K$, and $v_K^{\vee}:X_K\rightarrow B_K^{\vee}$ the homomorphism corresponding to the semi-abelian variety $G_K$. By \cite[2.4.1]{ray2}, the 1-motive $M_K$ is uniquely determined by a commutative diagram of the form
\begin{equation}\label{eq3.5}
\xymatrix{
&P_K\ar[d]  \\
Y_K\times_{\Spec K}X_K\ar[r]^-{v_K\times v_K^{\vee}}\ar[ru]^{s_K} &B_K\times_{\Spec K} B_K^{\vee}
},
\end{equation}
where $s_K$ is a bilinear map. Note that $s_K$ corresponds to a unique section 
\begin{equation}\label{eq3.6}
t_K:Y_K\times_{\Spec K}X_K\rightarrow E_K,
\end{equation}
where $E_K$ denotes the pull-back of the Poincar\'e biextension $P_K$ of $B_K$ and its dual $B_K^{\vee}$ along $v_K\times v_K^{\vee}$. 

Let $K'$ be a finite tamely ramified Galois extension of $K$ such that $B_K\times_{\Spec K}\Spec K'$ extends to an abelian scheme $B'$ over $S':=\Spec R'$, $Y_K\times_{\Spec K}\Spec K'$ extends to a constant group scheme over $S'$, and $T_K\times_{\Spec K}\Spec K'$ extends to a split torus over $S'$, where $R'$ denotes the integral closure of $R$ in $K'$. Let $\pi'$ be a uniformizer of $R'$ such that $\pi'=\pi^{\frac{1}{e}}$ with $e$ the ramification index of the extension $K'/K$, and we endow $S'$ with the log structure associated to $\N\rightarrow R',1\mapsto\pi'$. Then $S'$ is a finite Galois Kummer \'etale cover of $S$ with Galois group $\Gamma:=\mr{Gal}(K'/K)$.

Let $Y$ (resp. $X$) be the k\'et lattice over $S$ extending $Y_K$ (resp. $X_K$) as constructed in Subsection \ref{subsec3.1}. Then $Y$ (resp. $X$) can be regarded as a $\Gamma$-module. Let $T$ be the k\'et torus over $S$ extending $T_K$ as constructed in Subsection \ref{subsec3.2}. Note that $T$ is nothing but $\mc{H}om_{S_{\mr{k\acute{e}t}}}(X,\Gm)$. Let $B$ (resp. $B^{\vee}$) be the k\'et abelian scheme extending $B_K$ (resp. $B_K^{\vee}$) as constructed in Subsection \ref{subsec3.3}, and let $P$ be the Poincar\'e biextension of $(B,B^{\vee})$ by $\Gm$.

\begin{lem}\label{lem3.4}
The homomorphism $v_K$ (resp. $v_K^{\vee}$) extends to a unique homomorphism $v:Y\rightarrow B$ (resp. $v^{\vee}:X\rightarrow B^{\vee}$).
\end{lem}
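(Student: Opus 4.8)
The plan is to deduce both extension statements from a single N\'eron-type comparison and then prove that comparison by descent along $S'/S$, in direct analogy with the proof of Theorem \ref{thm3.2}(1). Concretely, I claim that for a k\'et lattice $F$ and a k\'et abelian scheme $A$ over $S$, with generic fibres $F_K$ and $A_K$, the restriction map
\[
\mr{Hom}_S(F,A)\to\mr{Hom}_{\Spec K}(F_K,A_K)
\]
is an isomorphism. Granting this, applying it to $(F,A)=(Y,B)$ and to $(F,A)=(X,B^{\vee})$ produces the desired homomorphisms: injectivity gives the uniqueness of $v$ and $v^{\vee}$, while surjectivity gives their existence as extensions of $v_K$ and $v_K^{\vee}$.

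To prove the comparison I would use the fixed finite Galois Kummer \'etale cover $S'/S$ of group $\Gamma$. Write $F':=F\times_SS'$, $A':=A\times_SS'$, $\tilde S':=S'\times_SS'$, $F'':=F\times_S\tilde S'$, $A'':=A\times_S\tilde S'$, and let $K'$, $\tilde K':=K'\otimes_KK'$ together with $F_{K'},A_{K'},F_{\tilde K'},A_{\tilde K'}$ denote the corresponding generic objects. By Proposition \ref{prop3.1} and the construction of Subsection \ref{subsec3.3} we may assume (and have already arranged) that $F'$ is a constant lattice $\Z^{r}$ and that $A'$ is the N\'eron model over $S'=\Spec R'$ of $A_{K'}$. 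Since $S'\to S$ is a Kummer \'etale cover, sheaf descent yields a commutative diagram with exact rows
\[
\xymatrix{
0\ar[r] &\mr{Hom}_S(F,A)\ar[r]\ar[d] &\mr{Hom}_{S'}(F',A')\ar[r]\ar[d] &\mr{Hom}_{\tilde S'}(F'',A'')\ar[d] \\
0\ar[r] &\mr{Hom}_K(F_K,A_K)\ar[r] &\mr{Hom}_{K'}(F_{K'},A_{K'})\ar[r] &\mr{Hom}_{\tilde K'}(F_{\tilde K'},A_{\tilde K'})
}
\]
exactly as in the proof of Theorem \ref{thm3.2}(1).

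Next I would identify the two right-hand vertical maps. Because $F'\cong\Z^{r}$ is constant, one has $\mr{Hom}_{S'}(F',A')=A'(S')^{r}$ and $\mr{Hom}_{K'}(F_{K'},A_{K'})=A_{K'}(K')^{r}$, and the N\'eron mapping property gives $A'(S')=A'(R')=A_{K'}(K')$. Since $(\rho,\mr{pr}_2)\colon\Gamma\times S'\xrightarrow{\cong}\tilde S'$ exhibits $\tilde S'$ as a disjoint union of copies of $S'$, the same computation over $\tilde S'$ (again a spectrum of a product of complete discrete valuation rings) shows the middle and right vertical maps are isomorphisms. A diagram chase then forces the left vertical map to be an isomorphism, which proves the comparison and hence Lemma \ref{lem3.4}.

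The step I expect to require the most care is the computation over the cover: one must verify that the identification $A'(R')=A_{K'}(K')$ supplied by the N\'eron mapping property is compatible both with restriction along $S'\to\tilde S'$ and with the canonical $\Gamma$-action $\tilde\rho$ of \eqref{eq3.1} used to descend $A$, so that the vertical maps are well defined and the two rows genuinely match up. This is precisely the point exploited in Subsection \ref{subsec3.3}, where the uniqueness clause of the N\'eron mapping property does the real work; once it is in place, the remainder is the formal descent bookkeeping already carried out for Theorem \ref{thm3.2}(1).
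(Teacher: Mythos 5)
Your proposal is correct and takes essentially the same approach as the paper: both arguments reduce to the Galois Kummer \'etale cover $S'/S$, over which $Y$ (resp.\ $X$) becomes constant and $B$ (resp.\ $B^{\vee}$) becomes the abelian scheme $B'$, then invoke the N\'eron mapping property to identify $B(S')=B'(R')=B_K(K')$, and finally descend along $S'/S$. The paper packages the descent step as taking $\Gamma$-invariants of $\mr{Hom}_{\Z\text{-}\mr{Mod}}(Y,B(S'))$, which is the same content as your equalizer diagram modelled on the proof of Theorem \ref{thm3.2}(1).
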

\begin{proof}
We only treat the case of $v_K$, the other one can be done in the same way. We have $B\times_SS'=B'$ by (\ref{eq3.4}). Therefore 
$$B(S')=B'(S')=B'(\Spec K')=B_K(\Spec K').$$
Since $Y$ is equivalent to a $\Gamma$-module, we get 
$$\mr{Hom}_S(Y,B)=\mr{Hom}_{\Z-\mr{Mod}}(Y,B(S'))^{\Gamma}=\mr{Hom}_{\Z-\mr{Mod}}(Y_K,B_K(\Spec K'))^{\Gamma}.$$
It follows that $v_K$ extends to a unique homomorphism $v:Y\rightarrow B$.
\end{proof}

By Lemma \ref{lem3.4}, we get a map $v\times v^{\vee}:Y\times_SX\rightarrow B\times_S B^{\vee}$. Let $P^{\mr{log}}$ be the push-out of $P$ along the inclusion $\Gm\hookrightarrow\Gml$, we get the following diagram
\begin{equation}\label{eq3.7}
\xymatrix{
&P^{\mr{log}}\ar[d]  \\
Y\times_SX\ar[r]^-{v\times v^{\vee}}\ar@{..>}[ru]^{s_K} &B\times_S B^{\vee}
}
\end{equation}
over $S$. The dotted arrow in (\ref{eq3.7}) means that it is only a map over $\Spec K$. The restriction of (\ref{eq3.7}) to $\Spec K$ is clearly just the diagram (\ref{eq3.5}).

\begin{lem}
The bilinear map $s_K$ from (\ref{eq3.5}) extends uniquely to a bilinear map 
$s^{\mr{log}}:Y\times_SX\rightarrow P^{\mr{log}}$
making the diagram (\ref{eq3.7}) commutative.
\end{lem}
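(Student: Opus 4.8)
The plan is to prove uniqueness first, and then to deduce existence from an explicit computation after base change to $S'$, the uniqueness statement taking care of bilinearity and descent for free. Throughout I regard $s_K$ as a (bilinear) section of the pulled-back biextension $(v\times v^{\vee})^*P^{\mr{log}}$ of $(Y,X)$ by $\Gml$, noting that over $\Spec K$ the log structure is trivial, so $\Gml=\Gm$ there and this section is exactly the one recorded in (\ref{eq3.5}).

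For uniqueness, recall that the sections of a biextension of $(Y,X)$ by $\Gml$ form a torsor under the bilinear homomorphisms $Y\times_SX\to\Gml$, equivalently under $\mr{Hom}_S(Y\otimes_\Z X,\Gml)$. Since $Y\otimes_\Z X$ is again a k\'et lattice, becoming a constant sheaf of finite rank $N$ over $S'$, such a homomorphism is determined by its values in $\Gml(S')=\Gamma(S',M^{\mr{gp}}_{S'})$. For the complete discrete valuation ring $R'$ one has $M^{\mr{gp}}(S')\cong (R')^{*}\oplus\Z\cdot\pi'$, and the restriction map $M^{\mr{gp}}(S')\to K'^{*}=\Gm(\Spec K')$, sending $\pi'$ to $\pi'$ and $(R')^{*}$ into $K'^{*}$, is injective because $K'^{*}\cong (R')^{*}\oplus\Z$. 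Hence a bilinear homomorphism into $\Gml$ vanishing over $K$ must already vanish over $S$, so any two extensions of $s_K$ coincide.

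For existence I pull everything back to the finite Galois Kummer \'etale cover $S'\to S$ with group $\Gamma$, where $Y'=Y\times_SS'$ and $X'=X\times_SS'$ are constant lattices and $B'$, $B^{'\vee}$ are honest abelian schemes. Choosing bases $\{y_i\}$ of $Y'$ and $\{x_j\}$ of $X'$, bilinearity reduces the $\Gm$-biextension $(v'\times v^{'\vee})^*P'$ to the line bundles $P'_{v'(y_i),v^{'\vee}(x_j)}$ on $S'=\Spec R'$. Each of these is trivial, $R'$ being local, so I fix a generator; over $K'$ the section $s_{K'}$ differs from this generator by an element $f\in K'^{*}$, and writing $f=u\cdot(\pi')^{n}$ with $u\in (R')^{*}$ and absorbing $u$ into the generator, the discrepancy becomes the single factor $(\pi')^{n}$. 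The decisive point, and the only place the passage from $\Gm$ to $\Gml$ intervenes, is that $\pi'$ is a \emph{global} section of $\Gml$ over $S'$, being the image of $1\in\N$ under the chosen log structure; therefore $(\pi')^{n}$ times the generator is a global section of the push-out $\Gml$-torsor that restricts to $s_{K'}$ over $K'$. Assembling these over the bases by bilinearity produces a bilinear section $s'$ of $(v'\times v^{'\vee})^*P^{\mr{log}}$ over $S'$ extending $s_{K'}$. Since $s_K$ is defined over $K$, its pullback $s_{K'}$ is $\Gamma$-invariant, and by the uniqueness just proved $s'$ is the unique extension of a $\Gamma$-invariant section, hence itself $\Gamma$-equivariant; it therefore descends along $S'\to S$ to the required bilinear map $s^{\mr{log}}:Y\times_SX\to P^{\mr{log}}$ making (\ref{eq3.7}) commute.

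The main obstacle is precisely the existence step: one must verify that the pole of $s_{K'}$ along the closed point has finite order and is cancelled exactly by a power of $\pi'$ inside $\Gml$. This is what makes $\Gml$, rather than $\Gm$, the correct receptacle, and it is the mechanism by which the monodromy of the degeneration is encoded in the logarithmic augmentation; once this is in place, bilinearity and Galois descent follow formally from uniqueness.
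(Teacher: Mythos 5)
Your proof is correct, and it takes a genuinely different route from the paper's. The paper argues cohomologically with the extension $E^{\mr{log}}$ of $Z=Y\otimes_{\Z}X$ by $\Gml$ attached to the biextension via \cite[Exp. VII, 3.6.5]{sga7-1}: it proves $H^1_{\mr{kfl}}(S',\Gml)=0$ (combining $H^1_{\mr{fl}}(S',\Gm)=\mr{Pic}(R')=0$, the vanishing $H^1_{\mr{fl}}(S',(\Gml/\Gm)_{S_{\mr{fl}}})=H^1_{\mr{\acute{e}t}}(s',\Z)=0$, and Kato's logarithmic Hilbert 90), feeds this into the global-section sequences of $E$, $E^{\mr{log}}$, $E_K$ in diagram (\ref{eq3.11}), and deduces from $Z(S')\cong Z_K(\Spec K')$ and $\Gml(S')\cong\Gm(\Spec K')$ that restriction $E^{\mr{log}}(S')\to E_K(\Spec K')$ is an isomorphism; the section $t_K$ then transports to a $\mr{Gal}(S'/S)$-equivariant section $t^{\mr{log}}$, which descends. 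You instead prove uniqueness first (torsor structure on bilinear sections plus injectivity of $\Gml(S')\to\Gm(\Spec K')$) and then build the extension by hand, trivializing the finitely many $\Gm$-torsors $P'_{v'(y_i),v^{'\vee}(x_j)}$ over the local ring $R'$ and observing that the discrepancy $u\cdot(\pi')^{n}$ against $s_{K'}$ lies in $\Gml(S')$. Both arguments pivot on the same arithmetic fact $\Gml(S')=(R')^{\times}\cdot(\pi')^{\Z}=\Gm(\Spec K')$, but yours bypasses Kummer flat cohomology and Kato's log Hilbert 90 entirely, needing only $\mr{Pic}(R')=0$; it also exhibits concretely that the exponents $n_{ij}$ are the values of the monodromy pairing. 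What the paper's detour buys is diagram (\ref{eq3.11}) and the section $t^{\mr{log}}$ of $E^{\mr{log}}$ themselves, which are reused verbatim in Section \ref{sec4} (diagram (\ref{eq4.6}) and the comparison with Raynaud's geometric monodromy), so its cohomological bookkeeping is an investment rather than overhead.

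One step you should spell out: ``assembling over the bases by bilinearity'' requires that prescribing values at the basis pairs $(y_i,x_j)$ determines a unique bilinear section. This holds, but the triviality of the $\Gm$-biextension $(v'\times v^{'\vee})^*P'$ over $S'$ must be computed on the \emph{classical} flat site, where $\mr{Biext}^1_{S'_{\mr{fl}}}(Y',X';\Gm)\cong\mr{Ext}^1_{S'_{\mr{fl}}}(Y'\otimes_{\Z}X',\Gm)\cong\mr{Pic}(R')^{N}=0$; on the Kummer flat site $H^1_{\mr{kfl}}(S',\Gm)$ need not vanish, so the analogous shortcut there would be unjustified. Since a bilinear section of representable objects on the classical site is also one on the Kummer flat site, bilinear sections of the pushed-out $\Gml$-biextension form a nonempty torsor under $\mr{Hom}_{S'_{\mr{kfl}}}(Y'\otimes_{\Z}X',\Gml)\cong\Gml(S')^{N}$, on which evaluation at the basis pairs is bijective. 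This is routine once noticed, so it is a point of exposition rather than a gap.
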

\begin{proof}
Let $E$ be the pull-back of $P$ long the map $v\times v^{\vee}$ on $(\mr{fs}/S)_{\mr{kfl}}$, and let $E^{\mr{log}}$ be the push-out of $E$ along the canonical map $\Gm\hookrightarrow\Gml$ on $(\mr{fs}/S)_{\mr{kfl}}$. Since both $Y$ and $X$ are Kummer \'etale locally representable by a finitely generated free abelian group, we have 
$$\mr{Biext}^1_{S_{\mr{kfl}}}(Y,X;-)=\mr{Ext}^1_{S_{\mr{kfl}}}(Y\otimes_{\Z}^{\mathbb{L}} X,-)=\mr{Ext}^1_{S_{\mr{kfl}}}(Y\otimes_{\Z} X,-)$$
by \cite[Exp. VII, 3.6.5]{sga7-1}. Therefore $E$ (resp. $E^{\mr{log}}$) can be regarded as an element of $\mr{Ext}^1_{S_{\mr{kfl}}}(Y\otimes_{\Z} X,\Gm)$ (resp. $\mr{Ext}^1_{S_{\mr{kfl}}}(Y\otimes_{\Z} X,\Gml)$), and $E^{\mr{log}}$ is still the push-out of $E$ under these identifications. Similarly, $E_K:=E\times_S\Spec K$ can be regarded as an element of $\mr{Ext}^1_{(\Spec K)_{\mr{fl}}}(Y_K\otimes_{\Z} X_K,\Gm)$. Note that both $E$ and $E^{\mr{log}}$ over $S$ restrict to $E_K$ over $K$. The extensions $E$, $E^{\mr{log}}$, and $E_K$, give rise to exact sequences
\begin{equation}\label{eq3.8}
0\rightarrow \Gm(S')\rightarrow E(S')\rightarrow Y\otimes_{\Z} X(S')\rightarrow H^1_{\mr{kfl}}(S',\Gm),
\end{equation}
\begin{equation}\label{eq3.9}
0\rightarrow \Gml(S')\rightarrow E^{\mr{log}}(S')\rightarrow Y\otimes_{\Z} X(S')\rightarrow H^1_{\mr{kfl}}(S',\Gml),
\end{equation}
and
\begin{equation}\label{eq3.10}
0\rightarrow \Gm(K')\rightarrow E_K(K')\rightarrow Y_K\otimes_{\Z} X_K(K')\rightarrow H^1_{\mr{fl}}(\Spec K',\Gm)
\end{equation}
respectively. Clearly we have 
$$H^1_{\mr{fl}}(S',\Gm)=H^1_{\mr{\acute{e}t}}(S',\Gm)=0$$
and
$$H^1_{\mr{fl}}(\Spec K',\Gm)=H^1_{\mr{\acute{e}t}}(\Spec K',\Gm)=0.$$
The short exact sequence $0\rightarrow\Gm\rightarrow\Gml\rightarrow (\Gml/\Gm)_{S_{\mr{fl}}}\rightarrow0$ gives rise to an exact sequence 
$$\rightarrow H^1_{\mr{fl}}(S',\Gm)\rightarrow H^1_{\mr{fl}}(S',\Gml)\rightarrow H^1_{\mr{fl}}(S',(\Gml/\Gm)_{S_{\mr{fl}}})\rightarrow.$$
Since $H^1_{\mr{fl}}(S',(\Gml/\Gm)_{S_{\mr{fl}}})=H^1_{\mr{fl}}(S',i'_*\Z)=H^1_{\mr{fl}}(s',\Z)=H^1_{\mr{\acute{e}t}}(s',\Z)=0$, where $i'$ denotes the inclusion of the closed point $s'$ of $S'$ into itself, we get $H^1_{\mr{fl}}(S',\Gml)=0$. By Kato's logarithmic Hilbert 90, see \cite[Thm. 3.20]{niz1}, we get 
$$H^1_{\mr{kfl}}(S',\Gml)=H^1_{\mr{fl}}(S',\Gml)=0.$$ 
The exact sequences (\ref{eq3.8}), (\ref{eq3.9}), and (\ref{eq3.10}) fit into the following commutative diagram
\begin{equation}\label{eq3.11}
\xymatrix{
0\ar[r] &\Gm(S')\ar[r]\ar[d] &E(S')\ar[r]\ar[d] &Z(S')\ar@{=}[d]\ar[r]^-{\delta} &H^1_{\mr{kfl}}(S',\Gm)\ar[d] \\
0\ar[r] &\Gml(S')\ar[r]\ar[d] &E^{\mr{log}}(S')\ar[r]\ar[d] &Z(S')\ar[r]\ar[d] &0  \\
0\ar[r] &\Gm(\Spec K')\ar[r] &E_K(\Spec K')\ar[r] &Z_K(\Spec K')\ar[r] &0  
}
\end{equation}
with exact rows\footnote{We will see that the map $\delta$ is zero in Subsection \ref{subsec4.2}.}, where $Z$ and $Z_K$ denote $Y\otimes_\Z X$ and $Y_K\otimes_\Z X_K$ respectively. Since $Y$ and $X$ become constant over $S'$, the map 
$$Z(S')\rightarrow Z_K(\Spec K')$$
is an isomorphism. The map $\Gml(S')\rightarrow \Gm(\Spec K')$ is also an isomorphism. Therefore the restriction map 
$$E^{\mr{log}}(S')\rightarrow E_K(\Spec K')=E^{\mr{log}}(\Spec K')$$
is an isomorphism. We regard $E_K$ as an extension of $Y_K\otimes_\Z X_K$ by $\Gm$, then the section $t_K$ (see (\ref{eq3.6})) of $E_K$ induces a section to the surjection $E^{\mr{log}}(S')\rightarrow Y\otimes_\Z X(S')$. This induced section is clearly $\mr{Gal}(S'/S)$-equivariant, therefore gives rise to a section 
\begin{equation}\label{eq3.12}
t^{\mr{log}}:Y\otimes_\Z X\rightarrow E^{\mr{log}}
\end{equation}
to the extension $E^{\mr{log}}$ of $Y\otimes_\Z X$ by $\Gml$. The homomorphism $t^{\mr{log}}$ is automatically a section to the corresponding biextension $E^{\mr{log}}$ of $(Y,X)$ by $\Gml$. Note that $E^{\mr{log}}$ is also the pull-back of $P^{\mr{log}}$ along $v\times v^{\vee}$, and $t^{\mr{log}}$ gives rise to a bilinear map $s^{\mr{log}}:Y\times_{S}X\rightarrow P^{\mr{log}}$ which extends $s_K$. Clearly we have the following commutative diagram
\begin{equation}\label{eq3.13}
\xymatrix{
&P^{\mr{log}}\ar[d]  \\
Y\times_{S}X\ar[r]^-{v\times v^{\vee}}\ar[ru]^{s^{\mr{log}}} &B\times_S B^{\vee}
}.
\end{equation}
This finishes the proof.
\end{proof}

Now we are ready to prove Theorem \ref{thm3.1}.

\begin{proof}[Proof of Theorem \ref{thm3.1}]  
Step 1: We prove that $M_K$ extends to a k\'et log 1-motive over $S$. Recall that $T=\mc{H}om_S(X,\Gm)$, and let $T_{\mr{log}}:=\mc{H}om_S(X,\Gml)$. We have the following two commutative diagrams
$$\xymatrix{
\mr{Ext}^1_{S_{\mr{kfl}}}(B,T)\ar[d]_{v^*}\ar[r]^-{\cong} &\mr{Biext}^1_{S_{\mr{kfl}}}(B,X;\Gm)\ar[d]^{(v,1_X)^*}  \\
\mr{Ext}^1_{S_{\mr{kfl}}}(Y,T)\ar[r]^-{\cong} &\mr{Biext}^1_{S_{\mr{kfl}}}(Y,X;\Gm)
}$$
and
$$\xymatrix{
\mr{Ext}^1_{S_{\mr{kfl}}}(B,T_{\mr{log}})\ar[d]_{v^*}\ar[r]^-{\cong} &\mr{Biext}^1_{S_{\mr{kfl}}}(B,X;\Gml)\ar[d]^{(v,1_X)^*}  \\
\mr{Ext}^1_{S_{\mr{kfl}}}(Y,T_{\mr{log}})\ar[r]^-{\cong} &\mr{Biext}^1_{S_{\mr{kfl}}}(Y,X;\Gml)
},$$
where the horizontal maps being isomorphisms comes from 
$$\mc{E}xt^1_{S_{\mr{kfl}}}(X,\Gm)=\mc{E}xt^1_{S_{\mr{kfl}}}(X,\Gml)=0$$
with the help of \cite[Exp. VIII, 1.1.4]{sga7-1}. Let $G\in \mr{Ext}^1_{S_{\mr{kfl}}}(B,T)$ (resp. $G_{\mr{log}}\in \mr{Ext}^1_{S_{\mr{kfl}}}(B,T_{\mr{log}})$) be the extension corresponding to the biextension $(1_B,v^{\vee})^*P$ (resp. $(1_B,v^{\vee})^*P^{\mr{log}}$). Then the section $s^{\mr{log}}$ of $E^{\mr{log}}$ gives rise to a homomorphism $u^{\mr{log}}:Y\rightarrow G_{\mr{log}}$ fitting into the following commutative diagram
\begin{equation}\label{eq3.14}
\xymatrix{
&&&Y\ar[d]^v\ar[ld]_{u^{\mr{log}}}   \\
0\ar[r] &T_{\mr{log}}\ar[r] &G_{\mr{log}}\ar[r] &B\ar[r] &0 \\
0\ar[r] &T\ar[r]\ar[u] &G\ar[r]\ar[u] &B\ar[r]\ar@{=}[u] &0
}
\end{equation}
of sheaves of abelian groups on $(\mr{fs}/S)_{\mr{kfl}}$. This gives a two-term complex 
$$Y\xrightarrow{u^{\mr{log}}}G_{\mr{log}}.$$
Since both $X$ and $Y$ are representable by a finitely generated free abelian group over $S'$, we have that $G\times_SS'$ is an extension of the abelian scheme $B\times_SS'$ by the torus $T\times_SS'$ on $(\mr{fs}/S')_{\mr{\acute{e}t}}$ by Remark \ref{rmk2.1} and $u^{\mr{log}}\times_SS':Y\times_SS'\rightarrow G_{\mr{log}}\times_SS'$ is a log 1-motive over $S'$. Therefore  $[Y\xrightarrow{u^{\mr{log}}}G_{\mr{log}}]$ is a k\'et log 1-motive over $S$. Clearly the k\'et log 1-motive $[Y\xrightarrow{u^{\mr{log}}}G_{\mr{log}}]$ extends $M_K$, and its construction is functorial. Therefore we get a functor
$$K\acute{e}t:\mr{TameSt\textnormal{-}1\textnormal{-}Mot}_K\to \mr{K\acute{e}tLog\textnormal{-}1\textnormal{-}Mot}_S.$$

Step 2: We show that $\mr{Mor}(M,M')\xrightarrow{\cong}\mr{Mor}(M_K,M'_K)$ for any two objects $M=[Y\xrightarrow{u}G_{\mr{log}}]$ and $M'=[Y'\xrightarrow{u'}G'_{\mr{log}}]$ in $\mr{K\acute{e}tLog\textnormal{-}1\textnormal{-}Mot}_S$, where $M_K:=M\times_S\Spec K$ and $M'_{K}:=M'\times_S\Spec K$. It suffices to show that any morphism $(\mathfrak{f}_{-1},\mathfrak{f}_0):M_K\to M'_K$ extends to a unique morphism from $M$ to $M'$. We rewrite the k\'et log 1-motive $M$ (resp. $M'$) as $(Y\xrightarrow{v} B,X\xrightarrow{v^\vee}B^\vee,Y\times X\xrightarrow{s}P^{\mr{log}})$ (resp. $(Y'\xrightarrow{v'} B',X'\xrightarrow{v^{'\vee}}B^{'\vee},Y'\times X'\xrightarrow{s'}P^{'\mr{log}})$) according to (\ref{eq2.15}). Let $v_K:=v\times_S\Spec K$, similarly for $X_K$, $Y_K$, and so on. By \cite[\S 10.2.12, 10.2.13, 10.2.14]{del1}, the morphism $(\mathfrak{f}_{-1},\mathfrak{f}_0)$ gives rise to a diagram 
\begin{equation}
\xymatrix{
&&&P_K^{\mr{log}}\ar[ld]  \\
Y_K\times X_K\ar[rr]_{v_K\times v_K^{\vee}}\ar[rrru]^(.6){s_K} &&B_K\times B_K^\vee  \\
&&&(1\times \mathfrak{f}_{\mr{ab}}^\vee)^*P_K^{\mr{log}}=(\mathfrak{f}_{\mr{ab}}\times 1)^*P_K^{'\mr{log}}=:Q^{\mr{log}}_K \ar[ld]\ar[uu]\ar[dd]   \\
Y_K\times X_K'\ar[rr]_{v_K\times v_K^{'\vee}}\ar[uu]_{1\times \mathfrak{f}_{-1}^{\vee}}\ar[dd]^{\mathfrak{f}_{-1}\times 1}\ar[rrru]|!{[rr];[rruu]}\hole &&B_K\times B_K^{'\vee}\ar[uu]^(.6){1\times \mathfrak{f}_{\mr{ab}}^\vee}\ar[dd]_(.4){\mathfrak{f}_{\mr{ab}}\times 1}  \\
&&&P_K^{'\mr{log}}\ar[ld]  \\
Y_K'\times X_K'\ar[rr]_{v_K'\times v_K^{'\vee}}\ar[rrru]^(.6){s'_K}|!{[rr];[rruu]}\hole &&B_K'\times B_K^{'\vee}    
},
\end{equation}
such that
\begin{enumerate}[(1)]
\item the two squares are commutative;
\item for any $y\in Y_K$ and any $x'\in X_K'$ we have 
\begin{equation}\label{eq3.16}
s_K(y,\mathfrak{f}_{-1}^\vee(x'))=s_K'(\mathfrak{f}_{-1}(y),x')
\end{equation}
after identifying the $\Gm$-torsors 
$$(P_K^{\mr{log}})_{v_K(y),v_K^{\vee}(\mathfrak{f}_{-1}^\vee(x'))}$$
and 
$$(P_K^{'\mr{log}})_{v_K'(\mathfrak{f}_{-1}(y)),v_K^{'\vee}(x')}$$
along the composition 
$$(P_K^{\mr{log}})_{v_K(y),v_K^{\vee}(\mathfrak{f}_{-1}^\vee(x'))}\xleftarrow{\cong}(Q_K^{\mr{log}})_{v_K(y),v_K^{'\vee}(x')}\xrightarrow{\cong}(P_K^{'\mr{log}})_{v_K'(\mathfrak{f}_{-1}(y)),v_K^{'\vee}(x')}.$$
\end{enumerate}
By Proposition \ref{prop3.1}, $\mathfrak{f}_{-1}$ (resp. $\mathfrak{f}_{-1}^\vee$) extends to a unique homomorphism $f_{-1}:Y\to Y'$ (resp. $f_{-1}^\vee:X'\to X$). By Theorem \ref{thm3.2} (1), $\mathfrak{f}_{\mr{ab}}$ (resp. $\mathfrak{f}_{\mr{ab}}^\vee$) extends to a unique homomorphism $f_{\mr{ab}}:B\to B'$ (resp. $f_{\mr{ab}}^\vee:B^{'\vee}\to B^\vee$). Since $\mathfrak{f}_{\mr{ab}}\circ v_K=v_K'\circ \mathfrak{f}_{-1}$, we have $f_{\mr{ab}}\circ v=v'\circ f_{-1}$ by the same reason as for Lemma \ref{lem3.4}. Similarly we have $f_{\mr{ab}}^\vee\circ v^{'\vee}=v^\vee\circ f_{-1}^\vee$. In order that $((f_{-1},f_{\mr{ab}}),(f_{-1}^\vee,f_{\mr{ab}}^\vee))$ gives a morphism $(f_{-1},f_0)$ from $M$ to $M'$, we are left with checking the condition (\ref{eq2.18}). But this follows from the equality (\ref{eq3.16}) and $\Gamma(\Spec\tilde{R},\Gml)=\Gamma(\Spec \tilde{K},\Gm)$ for any finite field extension $\tilde{K}$ of $K$, where $\tilde{R}$ denotes the normalization of $R$ in $\tilde{K}$ and $\Spec\tilde{R}$ is endowed with the canonical log structure. The uniqueness of $(f_{-1},f_0)$ follows from those of $f_{-1}$, $f_{\mr{ab}}$, $f_{-1}^\vee$, and $f_{\mr{ab}}^\vee$.

Step 3: By Step 1 and Step 2, the functor K\'et is fully faithful. For any $M\in \mr{K\acute{e}tLog\textnormal{-}1\textnormal{-}Mot}_S$, the base change $M_K$ of $M$ to $\Spec K$ lies in $\mr{TameSt\textnormal{-}1\textnormal{-}Mot}_K$. By Step 2, the identity morphism of $M_K$ extends to a unique isomorphism from $M$ to $\mr{K\acute{e}t}(M_K)$. Therefore K\'et is essentially surjective, hence an equivalence of categories. This finishes the proof of Theorem \ref{thm3.1}.
\end{proof}

\begin{ex}
Let $M_K=[Y_K\xrightarrow{u_K}G_K]\in\mr{TameSt\textnormal{-}1\textnormal{-}Mot}_K$, and $G_K$ an extension of an abelian variety $B_K$ by a torus $T_K$ over $K$. Assume that either $Y_K$ is not unramified, or $T_K$ is not unramified, or $B_K$ does not have good reduction. Then the k\'et log 1-motive $M^{\mr{log}}:=K\acute{e}t(M_K)$ is not a log 1-motive by Example \ref{ex3.1}, Example \ref{ex3.2}, and Example \ref{ex3.3}.
\end{ex}

\begin{cor}\label{cor3.1}
Let the notation and the assumptions be as in Theorem \ref{thm3.1}. We further assume that both $Y_K$ and $G_K$ have good reduction. Then the k\'et log 1-motive $M^{\mr{log}}:=K\acute{e}t(M_K)=[Y\xrightarrow{u^{\mr{log}}}G_{\mr{log}}]$ associated to $M_K$ is a log 1-motive.
\end{cor}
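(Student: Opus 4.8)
The plan is to show that, under the additional good reduction hypotheses, every constituent piece of $M^{\mr{log}}$, which a priori is only a k\'et object, is in fact classical, so that $[Y\xrightarrow{u^{\mr{log}}}G_{\mr{log}}]$ satisfies the stronger requirements in the definition of a log 1-motive. A log 1-motive differs from a k\'et log 1-motive exactly in demanding that the lattice $Y$ be \'etale locally constant and that $G$ be an extension of a \emph{classical} abelian scheme by a \emph{classical} torus for the \'etale topology. It therefore suffices to check that the k\'et lattice $Y$, the k\'et torus $T$, the k\'et abelian scheme $B$, and the extension $G$ are all classical.

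First I would unwind the hypothesis that $G_K$ has good reduction. As $G_K$ sits in an extension $0\to T_K\to G_K\to B_K\to 0$, its good reduction means that $G_K$ extends to a semi-abelian scheme over $S$, that is, an extension of an abelian scheme by a torus; the torus part extends $T_K$ and the abelian part extends $B_K$, so $T_K$ is unramified and $B_K$ has good reduction. With this in hand I invoke the equivalences established earlier: by Proposition \ref{prop3.1}, since $Y_K$ is unramified, $Y$ is a classical lattice; by Proposition \ref{prop3.2}, since $T_K$ is unramified, $T$ is a classical torus; and by Theorem \ref{thm3.2}(2), since $B_K$ has good reduction, $B$ is a classical abelian scheme.

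The main step, where I expect the real content to lie, is to promote the extension $G$ from a k\'et extension to a classical one. Even granting that $B$ and $T$ are classical, the class of $G$ lives a priori in $\mr{Ext}^1_{S_{\mr{kfl}}}(B,T)$, so it could conceivably be a genuinely Kummer-flat extension admitting no classical model. This possibility is precisely excluded by Remark \ref{rmk2.1}, which provides the isomorphism $\mr{Ext}^1_{S_{\mr{\acute{e}t}}}(B,T)\xrightarrow{\cong}\mr{Ext}^1_{S_{\mr{kfl}}}(B,T)$. Hence $G$ arises from a unique classical extension of $B$ by $T$ for the \'etale topology and is a classical semi-abelian scheme over $S$.

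Finally, once $G$ is a classical extension of the classical abelian scheme $B$ by the classical torus $T$, its logarithmic augmentation $G_{\mr{log}}$ from Definition \ref{defn2.5} is computed by the same formula in the \'etale and Kummer \'etale topologies and so coincides with the classical log augmentation, i.e.\ the push-out of $G$ along $T\hookrightarrow T_{\mr{log}}$. Together with $Y$ being \'etale locally constant, this exhibits $[Y\xrightarrow{u^{\mr{log}}}G_{\mr{log}}]$ as a log 1-motive in the sense of the definition taken from \cite{k-k-n2}, which completes the argument.
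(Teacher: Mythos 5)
Your proposal is correct and follows essentially the same route as the paper's own proof: reduce to showing the constituent pieces $Y$, $T$ (via its character group), and $B$ are classical using the good reduction hypotheses, and then invoke Remark \ref{rmk2.1} to descend the extension $G$ from the Kummer flat topology to the classical \'etale topology. Your version merely spells out more explicitly what the paper leaves implicit, namely that good reduction of $G_K$ yields unramifiedness of $T_K$ and good reduction of $B_K$, and which earlier equivalences (Proposition \ref{prop3.1}, Proposition \ref{prop3.2}, Theorem \ref{thm3.2}) certify classicality of each piece.
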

\begin{proof}
Since both $Y_K$ and $G_K$ have good reduction, both $X$ and $Y$ are \'etale locally representable by a finitely generated free abelian group over $S$. Therefore $G$ is an extension of the abelian scheme $B$ by the torus $T$ on $(\mr{fs}/S)_{\mr{kfl}}$. By Remark \ref{rmk2.1}, $G$ comes from an extension on $(\mr{fs}/S)_{\mr{\acute{e}t}}$. It follows that $[Y\xrightarrow{u^{\mr{log}}}G_{\mr{log}}]$ is a log 1-motive over $S$.
\end{proof}

\begin{rmk}
Corollary \ref{cor3.1} shows that a log 1-motive in the sense of \cite[4.6.1]{k-t1} extends uniquely to a log 1-motive in our sense (i.e. in the sense of \cite[Defn. 2.2]{k-k-n2}). This must have been known to Kazuya Kato who is one of the authors of both \cite{k-t1} and \cite{k-k-n2}.
\end{rmk}

\begin{rmk}
As we have proposed in Section \ref{sec1}, as a generalization of Theorem \ref{thm3.1}, it is natural to ask if every strict (not just strict tamely ramified) 1-motive over $K$ extends to a unique kfl log 1-motive over $S$, where a kfl log 1-motive can be defined as in Definition \ref{defn2.6} by using the Kummer flat topology instead of the Kummer \'etale topology. In the proofs of Proposition \ref{prop3.1}, Proposition \ref{prop3.2}, and Theorem \ref{thm3.2}, we have used explicitly the theory of logarithmic fundamental group which is only available in the Kummer \'etale topology. Therefore our method in this paper does not work for  the Kummer flat case.
\end{rmk}

\section{Monodromy}\label{sec4}
In this section, we construct a pairing for a tamely ramified strict 1-motive $M_K$ over a complete discrete valuation field via the k\'et log 1-motive $M^{\mr{log}}$ associated to $M_K$. We compare it with the geometric monodromy pairing from \cite[4.3]{ray2}.

\subsection{Logarithmic monodromy pairing}
We adopt the notation from last section. Consider the following push-out diagram
\begin{equation}\label{eq4.1}
\xymatrix{
0\ar[r] &\Gm\ar[r]\ar@{^(->}[d] &E\ar[r]\ar@{^(->}[d] &Y\otimes_{\Z}X\ar[r]\ar@{=}[d] &0  \\
0\ar[r] &\Gml\ar[r] &E^{\mr{log}}\ar[r] &Y\otimes_{\Z}X\ar[r]\ar@/^1pc/[l]^{t^{\mr{log}}} &0
},
\end{equation}
where $t^{\mr{log}}$ is the section (\ref{eq3.12}). Then the section $t^{\mr{log}}$  induces a linear map 
$$Y\otimes_{\Z}X\xrightarrow{t^{\mr{log}}}E^{\mr{log}} \rightarrow E^{\mr{log}}/E\cong (\Gml/\Gm)_{S_{\mr{kfl}}},$$
which corresponds to a bilinear map
\begin{equation}\label{eq4.2}
\langle-,-\rangle:Y\times X\rightarrow (\Gml/\Gm)_{S_{\mr{kfl}}}.
\end{equation}
This pairing is nothing but the monodromy pairing (\ref{defn2.6}) for the k\'et log 1-motive $M^{\mr{log}}$.

\begin{defn}
We call the pairing (\ref{eq4.2}) the \textbf{logarithmic monodromy pairing} of the tamely ramified strict 1-motive $M_K$.
\end{defn}

\begin{prop}\label{prop4.1}
Let the assumption and the notation be as in Theorem \ref{thm3.1} and its proof. The monodromy pairing (\ref{eq4.2}) vanishes if and only if the section $t^{\mr{log}}$ is induced from a section $t:Y\otimes_{\Z}X\rightarrow E$ of $E$.

When such a section $t$ exists, it corresponds to a section $s:Y\times_SX\rightarrow P$ which further corresponds to a map $u:Y\rightarrow G$. The maps $s$ and $u$ extend the diagrams (\ref{eq3.13}) and (\ref{eq3.14}) to the commutative diagrams
\begin{equation}
\xymatrix{
&P\ar[r]\ar[d] &P^{\mr{log}}\ar[d]  \\
Y\times_{S}X\ar[r]_-{v\times v^{\vee}}\ar[ru]^s\ar@{-->}[rru]^(.35){s^{\mr{log}}} &B\times_S B^{\vee}\ar@{=}[r] &B\times_S B^{\vee}
}.
\end{equation}
and
\begin{equation}
\xymatrix{
&&&Y\ar[d]^v\ar[ld]_{u^{\mr{log}}}\ar@{-->}[ldd]_(.35)u   \\
0\ar[r] &T_{\mr{log}}\ar[r] &G_{\mr{log}}\ar[r] &B\ar[r] &0 \\
0\ar[r] &T\ar[r]\ar[u] &G\ar[r]\ar[u] &B\ar[r]\ar@{=}[u] &0
}
\end{equation}
respectively. Therefore the given 1-motive $M_K$ extends to a unique k\'et 1-motive $M=[Y\xrightarrow{u}G]$ such that the k\'et log 1-motive $M^{\mr{log}}$ associated to $M_K$ is induced from $M$.
\end{prop}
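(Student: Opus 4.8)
The plan is to deduce the equivalence from a direct diagram chase in the push-out diagram (\ref{eq4.1}), and then to transport the resulting section through the chain of identifications $t\leftrightarrow s\leftrightarrow u$ already established in Section \ref{sec2}. First I would record that, since $E^{\mr{log}}$ is the push-out of $E$ along $\Gm\hookrightarrow\Gml$, the cokernel of the monomorphism $\iota:E\hookrightarrow E^{\mr{log}}$ agrees with the cokernel of $\Gm\hookrightarrow\Gml$; thus $E=\ker(E^{\mr{log}}\to E^{\mr{log}}/E)$ and $E^{\mr{log}}/E\cong(\Gml/\Gm)_{S_{\mr{kfl}}}$, compatibly with the two projections to $Y\otimes_{\Z}X$. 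By construction the monodromy pairing is the composite $Y\otimes_{\Z}X\xrightarrow{t^{\mr{log}}}E^{\mr{log}}\to E^{\mr{log}}/E$, so it vanishes if and only if $t^{\mr{log}}$ factors as $t^{\mr{log}}=\iota\circ t$ for some $t:Y\otimes_{\Z}X\to E$. Such a $t$ is automatically a homomorphism (because $\iota$ is a monomorphism of sheaves of abelian groups) and a section of $E$, since the projections of $E$ and $E^{\mr{log}}$ onto $Y\otimes_{\Z}X$ are compatible with $\iota$ while $t^{\mr{log}}$ is a section of $E^{\mr{log}}$; for the same reason $t$ is unique. This yields the asserted equivalence together with the existence and uniqueness of the lifting $t$.

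For the second part I would feed $t$ through the identifications of Section \ref{sec2}. Via $\mr{Biext}^1_{S_{\mr{kfl}}}(Y,X;\Gm)=\mr{Ext}^1_{S_{\mr{kfl}}}(Y\otimes_{\Z}X,\Gm)$ (\cite[Exp. VII, 3.6.5]{sga7-1}), the section $t$ of the extension $E=(v\times v^{\vee})^*P$ is the same datum as a trivialization of the associated biextension, i.e.\ a section $s:Y\times_SX\to P$ lying over $v\times v^{\vee}$. By the formalism of \cite[10.2.12, 10.2.13, 10.2.14]{del1}, $s$ is in turn equivalent to a section of $v^*G$, hence to a homomorphism $u:Y\to G$ lifting $v:Y\to B$. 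The two commutative diagrams of the statement express precisely that $s^{\mr{log}}=(P\to P^{\mr{log}})\circ s$ and $u^{\mr{log}}=(G\hookrightarrow G_{\mr{log}})\circ u$, and these hold because the identity $t^{\mr{log}}=\iota\circ t$ is carried by the same identifications to these compatibilities, using that $P^{\mr{log}}$ (resp.\ $G_{\mr{log}}$) is the push-out of $P$ (resp.\ $G$) along $\Gm\hookrightarrow\Gml$ and that the diagram (\ref{eq2.4}) maps to (\ref{eq2.13}). Consequently $[Y\xrightarrow{u}G]$ is a k\'et 1-motive whose log augmentation is $M^{\mr{log}}$, and restricting along $\Spec K\to S$ recovers $M_K$.

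Finally, uniqueness of $M=[Y\xrightarrow{u}G]$ follows from the uniqueness of $t$: any k\'et 1-motive extending $M_K$ whose log augmentation is $M^{\mr{log}}$ determines, by reading the chain $t\leftrightarrow s\leftrightarrow u$ backwards, a section $t$ of $E$ with $\iota\circ t=t^{\mr{log}}$, and we have already seen such a $t$ to be unique. I expect the only delicate point to be the bookkeeping in the second paragraph: one must verify that the three identifications $t\leftrightarrow s\leftrightarrow u$ and their logarithmic counterparts $t^{\mr{log}}\leftrightarrow s^{\mr{log}}\leftrightarrow u^{\mr{log}}$ are intertwined by the push-out maps $\Gm\hookrightarrow\Gml$, $P\to P^{\mr{log}}$ and $G\hookrightarrow G_{\mr{log}}$, so that the single relation $t^{\mr{log}}=\iota\circ t$ genuinely produces the two stated commutative diagrams. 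This is formal, but it relies essentially on the functoriality of the push-out construction and on the compatibility of (\ref{eq2.4}) with (\ref{eq2.13}) recorded in Section \ref{sec2}.
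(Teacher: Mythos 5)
Your proposal is correct and follows essentially the same route as the paper: the paper's proof also reads off the equivalence directly from the construction of the monodromy pairing as the composite $Y\otimes_{\Z}X\xrightarrow{t^{\mr{log}}}E^{\mr{log}}\to E^{\mr{log}}/E$, and then transports the lifted section $t$ through the biextension identifications exactly as in the proof of Theorem \ref{thm3.1}. The only difference is one of presentation: the paper leaves both steps as ``clear''/``similar to Theorem \ref{thm3.1}'', whereas you spell out the push-out diagram chase and the functoriality of the correspondences $t\leftrightarrow s\leftrightarrow u$ under push-out along $\Gm\hookrightarrow\Gml$.
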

\begin{proof}
By the construction the monodromy pairing, its vanishing is clearly equivalent to $t^{\mr{log}}$ being induced from a section $t:Y\otimes_{\Z}X\rightarrow E$ of $E$. The proof of the rest is similar to the proof of Theorem \ref{thm3.1}.
\end{proof}

\begin{prop}\label{prop4.2}
Let $M_K$ be a tamely ramified strict 1-motive over $K$, and $M^{\mr{log}}=[Y\xrightarrow{u^{\mr{log}}}G_{\mr{log}}]$ the k\'et log 1-motive associated to $M_K$. Assume that the logarithmic monodromy pairing of $M_K$ is induced by a pairing $\mu_{\pi}:Y\times X\rightarrow\pi^{\Z}$. Let 
$$u_{2,\pi}^{\mr{log}}:Y\rightarrow T_{\mr{log}}=\mc{H}om_{S_{\mr{kfl}}}(X,\Gml)\subset G_{\mr{log}}$$
be the map induced by $\mu_{\pi}$, and $u_{1,\pi}^{\mr{log}}:=u^{\mr{log}}-u_{2,\pi}^{\mr{log}}$. Then $u_{1,\pi}^{\mr{log}}$ factors as 
$$Y\xrightarrow{u_{1,\pi}}G\hookrightarrow G_{\mr{log}},$$
i.e. the k\'et log 1-motive $[Y\xrightarrow{u_{1,\pi}^{\mr{log}}}G_{\mr{log}}]$ is induced from the k\'et 1-motive $[Y\xrightarrow{u_{1,\pi}}G]$.
\end{prop}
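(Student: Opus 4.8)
The plan is to exploit the fact that, by Definition \ref{defn2.6}, the monodromy pairing of $M^{\mr{log}}$ is exactly the composite of $u^{\mr{log}}$ with the quotient map $q\colon G_{\mr{log}}\to(G_{\mr{log}}/G)_{S_{\mr{kfl}}}=(T_{\mr{log}}/T)_{S_{\mr{kfl}}}=\mc{H}om_{S_{\mr{kfl}}}(X,(\Gml/\Gm)_{S_{\mr{kfl}}})$; in other words, this pairing is precisely the obstruction to $u^{\mr{log}}$ factoring through the subgroup $G\hookrightarrow G_{\mr{log}}$. So my strategy is to compute $q\circ u_{1,\pi}^{\mr{log}}$ and show it vanishes, and then invoke the exactness of $0\to G\to G_{\mr{log}}\xrightarrow{q}(T_{\mr{log}}/T)_{S_{\mr{kfl}}}\to0$ on the Kummer flat site (available since $G_{\mr{log}}$ is a Kummer flat sheaf by Corollary \ref{cor2.1}, and $G=\ker q$) to obtain the desired factorization.

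First I would record that $q\circ u^{\mr{log}}$ corresponds to the logarithmic monodromy pairing $\langle-,-\rangle$ of (\ref{eq4.2}). Next I would compute the contribution of $u_{2,\pi}^{\mr{log}}$: since $u_{2,\pi}^{\mr{log}}$ factors through $T_{\mr{log}}=\mc{H}om_{S_{\mr{kfl}}}(X,\Gml)$ and sends $y\in Y$ to the homomorphism $x\mapsto\mu_\pi(y,x)$, composing with $q$ amounts to reducing the values of $\mu_\pi$ modulo $\Gm$. By the hypothesis that the monodromy pairing is induced by $\mu_\pi$, this reduction $\bar\mu_\pi\colon Y\times X\to(\Gml/\Gm)_{S_{\mr{kfl}}}$ equals $\langle-,-\rangle$. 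Hence $q\circ u_{2,\pi}^{\mr{log}}$ also corresponds to $\langle-,-\rangle$, and therefore $q\circ u_{1,\pi}^{\mr{log}}=q\circ u^{\mr{log}}-q\circ u_{2,\pi}^{\mr{log}}$ corresponds to $\langle-,-\rangle-\langle-,-\rangle=0$.

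With $q\circ u_{1,\pi}^{\mr{log}}=0$ in hand, the exactness of the quotient sequence forces $u_{1,\pi}^{\mr{log}}$ to factor through $G=\ker q$, yielding a homomorphism $u_{1,\pi}\colon Y\to G$ with $u_{1,\pi}^{\mr{log}}=(G\hookrightarrow G_{\mr{log}})\circ u_{1,\pi}$. Since $G$ is an extension of the k\'et abelian scheme $B$ by the k\'et torus $T$, the pair $[Y\xrightarrow{u_{1,\pi}}G]$ is a k\'et 1-motive, and pushing $G$ out along $T\hookrightarrow T_{\mr{log}}$ recovers $[Y\xrightarrow{u_{1,\pi}^{\mr{log}}}G_{\mr{log}}]$; this is exactly the assertion that the latter is induced from the former.

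I do not expect a serious obstacle here: the statement is essentially a reformulation of the defining property of the monodromy pairing, together with the observation that subtracting off the $T_{\mr{log}}$-valued map $u_{2,\pi}^{\mr{log}}$ realizing $\mu_\pi$ kills the monodromy. The only points requiring care are the bookkeeping of the canonical identifications $(G_{\mr{log}}/G)_{S_{\mr{kfl}}}=(T_{\mr{log}}/T)_{S_{\mr{kfl}}}=\mc{H}om_{S_{\mr{kfl}}}(X,(\Gml/\Gm)_{S_{\mr{kfl}}})$ from the discussion following Definition \ref{defn2.5}, and checking that $u_{2,\pi}^{\mr{log}}$ is a well-defined homomorphism into $T_{\mr{log}}$, which holds because $\mu_\pi$ is bilinear with values in the constant subsheaf $\pi^{\Z}\subset\Gml$.
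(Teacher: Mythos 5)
Your proof is correct and follows the same strategy as the paper's: both arguments show that the monodromy pairing of $[Y\xrightarrow{u_{1,\pi}^{\mr{log}}}G_{\mr{log}}]$ vanishes because it is the difference of the pairings of $[Y\xrightarrow{u^{\mr{log}}}G_{\mr{log}}]$ and $[Y\xrightarrow{u_{2,\pi}^{\mr{log}}}G_{\mr{log}}]$, which agree by the hypothesis on $\mu_\pi$. The only real difference is at the final step: where the paper simply cites Proposition \ref{prop4.1}, you conclude directly from the exactness of $0\to G\to G_{\mr{log}}\to(G_{\mr{log}}/G)_{S_{\mr{kfl}}}\to 0$ that a homomorphism killed by the quotient map factors through the kernel $G$ --- which amounts to an inline reproof of the relevant direction of that proposition, phrased at the level of $G_{\mr{log}}$ rather than of the biextension $E^{\mr{log}}$.
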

\begin{proof}
It suffices to prove that $u_{1,\pi}^{\mr{log}}$ factors through $G\hookrightarrow G_{\mr{log}}$, the rest is clear. The monodromy pairing of the k\'et log 1-motive $[Y\xrightarrow{u_{1,\pi}^{\mr{log}}}G_{\mr{log}}]$ is the difference of the monodromy pairings of $[Y\xrightarrow{u^{\mr{log}}}G_{\mr{log}}]$ and $[Y\xrightarrow{u_{2,\pi}^{\mr{log}}}G_{\mr{log}}]$. Since the two monodromy pairings agree, we have that the monodromy pairing of $[Y\xrightarrow{u_{1,\pi}^{\mr{log}}}G_{\mr{log}}]$ vanishes. By Proposition \ref{prop4.1}, we are done.
\end{proof}

\begin{ex}
Let $M_K=[Y_K\xrightarrow{u_K} G_K]$ be a tamely ramified strict 1-motive over $K$. Assume that both $Y_K$ and $G_K$ have good reduction. Then both $Y$ and $X$ are \'etale locally constant. Therefore the monodromy pairing 
$$\langle-,-\rangle:Y\times X\rightarrow (\Gml/\Gm)_{S_{\mr{kfl}}}$$ factors through the canonical homomorphism 
$$\pi^{\Z}\cong M^{\mr{gp}}_S/\mc{O}_S^{\times}\rightarrow (\Gml/\Gm)_{S_{\mr{kfl}}}.$$
In other words, the monodromy pairing of $M_K$ satisfies the assumption of Proposition \ref{prop4.2} in this case.
\end{ex}

The construction of the decomposition $u^{\mr{log}}=u_{1,\pi}^{\mr{log}}+u_{2,\pi}^{\mr{log}}$ involves the chosen uniformizer $\pi$. Next we look for a decomposition $u^{\mr{log}}=u_{1}^{\mr{log}}+u_{2}^{\mr{log}}$ independent of the choice of a uniformizer, such that
\begin{equation}\label{eq4.5}
\text{$u_{1}^{\mr{log}}$ is induced by some map $u_{1}:Y\rightarrow G$ and $u_{2}^{\mr{log}}$ factors through $T_{\mr{log}}\hookrightarrow G_{\mr{log}}$.}
\end{equation}

\begin{prop}
Let $M_K$ be a tamely ramified strict 1-motive over $K$, and $M^{\mr{log}}=[Y\xrightarrow{u^{\mr{log}}}G_{\mr{log}}]$ the k\'et log 1-motive associated to $M_K$. The decompositions $u^{\mr{log}}=u_{1}^{\mr{log}}+u_{2}^{\mr{log}}$ satisfying the condition (\ref{eq4.5}) correspond canonically to the trivializations $t:Y\otimes_{\Z} X\rightarrow E$ of the extension $E$ from (\ref{eq4.1}). Then the homomorphism $u_2^{\mr{log}}$ corresponds to the difference homomorphism $t^{\mr{log}}-t$, where $t^{\mr{log}}$ is as in (\ref{eq4.1}).
\end{prop}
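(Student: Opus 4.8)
The plan is to reduce the statement to the correspondences already established in the proof of Theorem~\ref{thm3.1} and in Proposition~\ref{prop4.1}, and then to match up the relevant torsor structures.

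First I would note that a decomposition $u^{\mr{log}}=u_1^{\mr{log}}+u_2^{\mr{log}}$ satisfying~(\ref{eq4.5}) is the same datum as a lift $u_1\colon Y\to G$ of $v$ along $G\to B$. Indeed, $u^{\mr{log}}$ projects to $v$ on $B$ by construction, and $u_2^{\mr{log}}$ projects to $0$ since it factors through $T_{\mr{log}}=\ker(G_{\mr{log}}\to B)$; hence $u_1^{\mr{log}}$ also projects to $v$, so the underlying map $u_1\colon Y\to G$ (with $u_1^{\mr{log}}$ the composite $Y\xrightarrow{u_1}G\to G_{\mr{log}}$) is a lift of $v$. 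Conversely, any lift $u_1$ of $v$ produces such a decomposition by taking $u_1^{\mr{log}}$ to be the composite $Y\xrightarrow{u_1}G\to G_{\mr{log}}$ and $u_2^{\mr{log}}:=u^{\mr{log}}-u_1^{\mr{log}}$, the latter automatically factoring through $T_{\mr{log}}$ as it projects to $0$ on $B$. Thus decompositions satisfying~(\ref{eq4.5}) correspond bijectively to lifts of $v$ to $G$.

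Next I would invoke the identification $\mr{Ext}^1_{S_{\mr{kfl}}}(Y,T)\cong\mr{Ext}^1_{S_{\mr{kfl}}}(Y\otimes_{\Z}X,\Gm)$ coming from $T=\mc{H}om_{S_{\mr{kfl}}}(X,\Gm)$ and the biextension formalism of \cite[Exp. VIII, 1.1.4]{sga7-1}, exactly as in the proof of Theorem~\ref{thm3.1}; under it the pulled-back extension $v^*G$ corresponds to $E$. A lift $u_1$ of $v$ is precisely a section of $v^*G\in\mr{Ext}^1_{S_{\mr{kfl}}}(Y,T)$, and this matches a trivialization $t\colon Y\otimes_{\Z}X\to E$ of $E$ --- the very correspondence between $u$ and $t$ recorded in Proposition~\ref{prop4.1}. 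Composing with the bijection of the first step yields the asserted canonical correspondence between decompositions and trivializations $t$.

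Finally I would compute $u_2^{\mr{log}}$. The lifts of $v$ to $G_{\mr{log}}$ form a torsor under $\mr{Hom}_{S_{\mr{kfl}}}(Y,T_{\mr{log}})=\mr{Hom}_{S_{\mr{kfl}}}(Y\otimes_{\Z}X,\Gml)$, and the sections of $E^{\mr{log}}$ form a torsor under the same group; by functoriality of $\mr{Ext}^1$ and $\mr{Biext}^1$ and of the push-out along $\Gm\hookrightarrow\Gml$, the dictionary of Section~\ref{sec2} identifies these two torsors compatibly with their difference maps. Under this identification $u^{\mr{log}}$ corresponds to $t^{\mr{log}}$, while $u_1^{\mr{log}}$ corresponds to the image of $t$ under the push-out $E\hookrightarrow E^{\mr{log}}$; hence $u_2^{\mr{log}}=u^{\mr{log}}-u_1^{\mr{log}}$ corresponds to the difference of sections $t^{\mr{log}}-t$, regarded as an element of $\mr{Hom}_{S_{\mr{kfl}}}(Y\otimes_{\Z}X,\Gml)=\mr{Hom}_{S_{\mr{kfl}}}(Y,T_{\mr{log}})$. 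The hard part will be precisely this last step: one must verify that forming the difference of two lifts of $v$ and forming the difference of the two corresponding sections of $E^{\mr{log}}$ return the same element of $\mr{Hom}_{S_{\mr{kfl}}}(Y\otimes_{\Z}X,\Gml)$. This is formal once all torsor structures are unwound, but it is where the identifications of Section~\ref{sec2} must be applied coherently.
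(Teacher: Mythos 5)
Your proposal is correct and follows essentially the same route as the paper: both identify decompositions satisfying (\ref{eq4.5}) with lifts of $v$ to $G$, hence with trivializations $t$ of $E$ via the dictionary $\mr{Ext}^1_{S_{\mr{kfl}}}(Y,T)\cong\mr{Ext}^1_{S_{\mr{kfl}}}(Y\otimes_{\Z}X,\Gm)$, and both obtain $u_2^{\mr{log}}\leftrightarrow t^{\mr{log}}-t$ from the fact that the difference of two sections of $E^{\mr{log}}$ lands in $\Gml$ (the paper's ``easy calculation''), the torsor compatibility you defer being exactly as formal as the paper treats it.
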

\begin{proof}
Given a decomposition $u^{\mr{log}}=u_{1}^{\mr{log}}+u_{2}^{\mr{log}}$ satisfying the condition (\ref{eq4.5}), the map $u_1$ associated to $u_{1}^{\mr{log}}$ gives rise to a section $t:Y\otimes_{\Z} X\rightarrow E$ of $E$. 

Conversely, given a section $t:Y\otimes_{\Z} X\rightarrow E$ of $E$, the decomposition 
$$t^{\mr{log}}=t+(t^{\mr{log}}-t):=t_1+t_2$$
gives rise to a decomposition $u^{\mr{log}}=u_{1}^{\mr{log}}+u_{2}^{\mr{log}}$ with $u_i^{\mr{log}}$ induced by $t_i$. It is clear that $u_1^{\mr{log}}$ factors through $G\hookrightarrow G_{\mr{log}}$. By an easy calculation $t^{\mr{log}}-t$ factors through $\Gml\hookrightarrow E^{\mr{log}}$, therefore $u_2^{\mr{log}}$ factors as $Y\rightarrow T_{\mr{log}}\rightarrow G_{\mr{log}}$. Hence the decomposition $u^{\mr{log}}=u_{1}^{\mr{log}}+u_{2}^{\mr{log}}$ satisfies the condition (\ref{eq4.5}).
\end{proof}

As before, let $Z:=Y\otimes_{\Z} X$. We abbreviate $(\Gml/\Gm)_{S_{\mr{kfl}}}$ as $\Gmlb$. Applying the functor $\mr{Hom}_{S_{\mr{kfl}}}(Z,-)$ to the short exact sequence 
$$0\rightarrow\Gm\rightarrow\Gml\rightarrow\Gmlb\rightarrow0,$$
we get an exact sequence
$$\mr{Hom}_{S_{\mr{kfl}}}(Z,\Gml)\xrightarrow{\alpha}\mr{Hom}_{S_{\mr{kfl}}}(Z,\Gmlb)\rightarrow\mr{Ext}^1_{S_{\mr{kfl}}}(Z,\Gm)\rightarrow\mr{Ext}^1_{S_{\mr{kfl}}}(Z,\Gml).$$
Let $\mu^{\mr{log}}\in\mr{Hom}_{S_{\mr{kfl}}}(Z,\Gmlb)$ be the element corresponding to  the logarithmic monodromy pairing $\langle-,-\rangle$ of $M_K$. Then the element $E$ of $\mr{Ext}^1_{S_{\mr{kfl}}}(Z,\Gm)$ is the image of $\mu^{\mr{log}}$ along the map $\mr{Hom}_{S_{\mr{kfl}}}(Z,\Gmlb)\rightarrow\mr{Ext}^1_{S_{\mr{kfl}}}(Z,\Gm)$. If $E$ is trivial, then the subset 
$$\alpha^{-1}(\mu^{\mr{log}})\subset \mr{Hom}_{S_{\mr{kfl}}}(Z,\Gml)=\mr{Hom}_{S_{\mr{kfl}}}(Y,T_{\mr{log}})$$
is not empty, and its elements correspond to the choices of $u_2^{\mr{log}}$.

\subsection{Comparison with Raynaud's geometric monodromy}\label{subsec4.2}
Since $B$ and $B^{\vee}$ become abelian schemes after base change to $S'$, $P\times_SS'$ is the Poincar\'e biextension of the abelian schemes $B\times_SS'$ and $B^{\vee}\times_SS'$, in particular 
$$P\times_SS'\in\mr{Biext}^1_{S'_{\mr{fl}}}(B\times_SS',B^{\vee}\times_SS';\Gm).$$
It follows that the extension $E\times_SS'$ lies in the subgroup $\mr{Ext}^1_{S'_{\mr{fl}}}((Y\otimes_{\Z} X)\times_SS',\Gm)$ of the group $\mr{Ext}^1_{S'_{\mr{kfl}}}((Y\otimes_{\Z} X)\times_SS',\Gm)$. Therefore the image of the map $\delta$ from (\ref{eq3.11}) lands in the subgroup $H^1_{\mr{fl}}(S',\Gm)$ of $H^1_{\mr{kfl}}(S',\Gm)$. Since $H^1_{\mr{fl}}(S',\Gm)=0$, the diagram (\ref{eq3.11}) gives rise to the following commutative diagram
\begin{equation}\label{eq4.6}
\xymatrix{
0\ar[r] &\Gm(S')\ar[r]\ar[d] &E(S')\ar[r]\ar[d] &Y\otimes_{\Z} X(S')\ar@{=}[d]\ar[r] &0 \\
0\ar[r] &\Gml(S')\ar[r]\ar[d]^{\cong} &E^{\mr{log}}(S')\ar[r]\ar[d]^{\cong} &Y\otimes_{\Z} X(S')\ar[r]\ar[d]^{\cong}\ar@/^1pc/[l]^{t^{\mr{log}}} &0  \\
0\ar[r] &\Gm(\Spec K')\ar[r] &E_K(\Spec K')\ar[r] &Y_K\otimes_{\Z} X_K(\Spec K')\ar[r]\ar@/^1pc/[l]^{t_K} &0  
}
\end{equation}
with exact rows. Then the pairing (\ref{eq4.2}) induces a pairing 
$$Y(S')\times X(S')\rightarrow \Gamma(S',(\Gml/\Gm)_{S_{\mr{kfl}}}),$$
which actually factorizes through a pairing
\begin{equation}\label{eq4.7}
\langle-,-\rangle:Y(S')\times X(S')\rightarrow \Gml(S')/\Gm(S')
\end{equation}
by the diagram (\ref{eq4.6}). The pairing (\ref{eq4.7}) is clearly $\mr{Gal}(S'/S)$-equivariant, and also determines the monodromy pairing (\ref{eq4.2}).
We have the canonical identification $\Gml(S')/\Gm(S')=(K')^\times/(R')^\times$. Let $\overline{K}$ be a separable closure of $K$ containing $K'$. Then the valuation of $K$ extends to a unique valuation of $\overline{K}$ taking value in $\Q$. Applying the valuation of $\overline{K}$ on $(K')^\times/(R')^\times$, we get a monomorphism $\Gml(S')/\Gm(S')=(K')^\times/(R')^\times\hookrightarrow \Q$. Then the pairing (\ref{eq4.7}) induces a pairing
\begin{equation}\label{eq4.8}
\langle-,-\rangle:Y(S')\times X(S')\rightarrow \Q
\end{equation}
which is equivariant with respect to the action of $\mr{Gal}(S'/S)=\mr{Gal}(K'/K)$.

\begin{prop}
The pairing (\ref{eq4.8}) coincides with the geometric monodromy pairing $\mu:Y_K(\Spec K')\times X_K(\Spec K')\rightarrow\Q$ from \cite[4.3]{ray2}.
\end{prop}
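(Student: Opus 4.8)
The plan is to unwind both pairings down to a single period computation and then match the normalizations. Both the pairing (\ref{eq4.8}) and Raynaud's $\mu$ are ultimately read off from the section $t_K$ of (\ref{eq3.6}) --- equivalently the bilinear form $s_K$ of (\ref{eq3.5}) --- which records the valuations of the periods of the 1-motive $M_{K'}$ after passing to the field $K'$ where $G_{K'}$ acquires semistable reduction. So the first task is to express (\ref{eq4.8}) purely in terms of $t_K$ and an integral trivialization over $S'$.

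First I would make (\ref{eq4.8}) completely explicit over $S'$. By the top row of (\ref{eq4.6}) the extension $0\to (R')^\times\to E(S')\to (Y\otimes_\Z X)(S')\to 0$ of abelian groups has finitely generated free quotient, hence splits; choose a splitting $t':(Y\otimes_\Z X)(S')\to E(S')$. Then $t^{\mr{log}}-t'$ takes values in $\Gml(S')$, and its reduction modulo $\Gm(S')$ is independent of the choice of $t'$, since two splittings differ by a homomorphism into $(R')^\times=\Gm(S')$. This reduction is exactly the homomorphism $(Y\otimes_\Z X)(S')\to \Gml(S')/\Gm(S')=(K')^\times/(R')^\times$ inducing (\ref{eq4.7}), and (\ref{eq4.8}) is obtained by composing with the extended valuation $(K')^\times/(R')^\times\hookrightarrow \Q$; this is just the mechanism of Proposition \ref{prop4.1} made quantitative.

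Next I would recall Raynaud's construction from \cite[4.3]{ray2} and phrase it in the same terms. Over $K'$ the pulled-back Poincar\'e biextension $E_K$ is a split extension of $(Y_K\otimes_\Z X_K)(\Spec K')$ by $(K')^\times$, and the period homomorphism attached to $M_{K'}$ is $t_K$ minus an integral splitting; Raynaud's geometric monodromy is the valuation of this period. Because the base change of $t'$ to $K'$ is such an integral splitting, and because under the isomorphisms between the second and third rows of (\ref{eq4.6}) the section $t^{\mr{log}}$ corresponds to $t_K$, Raynaud's period homomorphism and the class computed in the previous paragraph are the \emph{same} element of $(K')^\times/(R')^\times$. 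Applying the valuation and checking $\mr{Gal}(K'/K)$-equivariance --- both pairings being equivariant by construction --- then yields the coincidence of (\ref{eq4.8}) with $\mu$.

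The hard part will be reconciling the bookkeeping in Raynaud's definition with the biextension formalism used here: \cite[4.3]{ray2} phrases the geometric monodromy intrinsically, through the N\'eron models and the rigid-analytic uniformization of $A_{K'}$, so the genuine content is to verify that his pairing is indeed computed by the period homomorphism $t_K$ against an integral trivialization, and that the normalization of the valuation --- with $\pi'=\pi^{1/e}$, so that $v$ takes values in $\frac{1}{e}\Z\subset\Q$ --- matches on the nose rather than up to a scalar or a sign. Once both constructions are placed over the common model $S'$ and the integral splitting $t'$ is used on both sides, the identification should be immediate.
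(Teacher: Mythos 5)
Your proposal is correct and takes essentially the same approach as the paper: both proofs identify each pairing with the class of the section ($t^{\mr{log}}$, resp.\ $t_K$) modulo the integral structure $E(S')$ inside $(K')^\times/(R')^\times$, and then invoke the isomorphism between the second and third rows of (\ref{eq4.6}) to match them before applying the valuation. Your explicit choice of a splitting $t'$ of the top row of (\ref{eq4.6}) is just a concrete way of computing in the quotient $E_K(\Spec K')/E(S')\cong \Gm(\Spec K')/\Gm(S')$ that the paper works with directly, and the identification of that quotient class with Raynaud's pairing is asserted (not re-derived from \cite[4.3]{ray2}) in the paper exactly as you flag it.
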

\begin{proof}
The map $t_K$ in the diagram (\ref{eq4.6}) induces a homomorphism 
\begin{align*}
Y_K\otimes_{\Z} X_K(\Spec K')\rightarrow E_K(\Spec K')/E(S')&\cong \Gm(\Spec K')/\Gm(S')  \\
&=(K')^\times/(R')^\times
\end{align*}
which gives rise to exactly the monodromy pairing from \cite[4.3]{ray2} after applying the unique valuation of $\overline{K}$ which extends the valuation on $K$. Since the second row and the third row in the diagram (\ref{eq4.6}) are isomorphic, we are done. 
\end{proof}

If Raynaud's geometric monodromy pairing $\mu$ factors through $\Z\hookrightarrow\Q$, \cite[Prop. 4.5.1]{ray2} gives a decomposition $u_K=u^1_{K,\pi}+u^2_{K,\pi}$ such that
\begin{equation}\label{eq4.9}
\begin{split}
&\text{the $K$-1-motive $M^1_{K,\pi}=[Y_K\xrightarrow{u^1_{K,\pi}}G_K]$ has potentially good reduction;}  \\
&\text{and $u^2_{K,\pi}$ factors through the torus part $T_K$ of $G_K$.}
\end{split}
\end{equation}
Moreover such a decomposition is made independent of the choice of the uniformizer $\pi$ in \cite[Prop. 4.5.3]{ray2}, namely a decomposition $u_K=u^1_K+u^2_K$ satisfying the condition analogous to (\ref{eq4.9}), corresponds to a trivialization $\tau:Z_K=Y_K\otimes_{\Z} X_K\rightarrow \mc{E}_{\mr{rig}}$ of the extension $\mc{E}_{\mr{rig}}$ of $Z_K$ by $U_{\mr{rig}}$ defined in \cite[Rmk. 4.5.2 (iii)]{ray2}. 

\begin{lem}\label{lem4.1}
Raynaud's monodromy factors through $\Z\hookrightarrow\Q$ if and only if the assumption of Proposition \ref{prop4.2} holds, i.e. the logarithmic monodromy pairing (\ref{eq4.2}) of $M_K$ is induced by a pairing $\mu_{\pi}:Y\times X\to\pi^\Z$.
\end{lem}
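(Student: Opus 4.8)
The plan is to reduce both conditions to a single statement about the pairing (\ref{eq4.7}), and then to finish with a short valuation-theoretic computation. Recall from the construction preceding (\ref{eq4.8}) that the pairing (\ref{eq4.8}) — which by the preceding proposition coincides with Raynaud's geometric monodromy $\mu$ — is obtained from (\ref{eq4.7}) by composing with the valuation monomorphism
$$v\colon \Gml(S')/\Gm(S')=(K')^{\times}/(R')^{\times}\hookrightarrow\Q$$
induced by the unique valuation of $\overline{K}$ extending that of $K$. Since the pairing (\ref{eq4.7}) is $\mr{Gal}(S'/S)$-equivariant and determines the monodromy pairing (\ref{eq4.2}), I would first argue that the logarithmic monodromy pairing (\ref{eq4.2}) is induced by a pairing $\mu_{\pi}\colon Y\times X\to\pi^{\Z}$ if and only if the values of (\ref{eq4.7}) lie in the image of $\pi^{\Z}$ inside $(K')^{\times}/(R')^{\times}$, a subgroup generated by the class of $\pi$. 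The ``if'' direction here uses that the subsheaf $\pi^{\Z}\cong M^{\mr{gp}}_S/\mc{O}_S^{\times}$ is defined over $S$, so that a $\mr{Gal}(S'/S)$-equivariant pairing landing in it descends to the desired $\mu_{\pi}$ on $S$.

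Next I would carry out the valuation computation. With $\pi'$ the chosen uniformizer of $R'$ and $\pi=(\pi')^{e}$, the normalized valuation of $K'$ identifies $(K')^{\times}/(R')^{\times}$ with $\Z$, the class of $\pi'$ corresponding to $1$ and the class of $\pi$ to $e$; under this identification $v$ becomes the inclusion $\tfrac{1}{e}\colon\Z\hookrightarrow\Q$, $n\mapsto n/e$. Consequently $v(\pi)=1$, the restriction of $v$ to $\pi^{\Z}$ is an isomorphism $\pi^{\Z}\xrightarrow{\cong}\Z$, and $v^{-1}(\Z)=\pi^{\Z}$ (an element of $(K')^{\times}/(R')^{\times}$ has integral $v$-value precisely when its $K'$-valuation is divisible by $e$, i.e. when it is a power of $\pi$). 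Since $v$ is injective, a homomorphism into $(K')^{\times}/(R')^{\times}$ takes values in $\pi^{\Z}$ if and only if its composite with $v$ takes values in $\Z$.

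Combining the two steps yields the equivalence: the pairing (\ref{eq4.7}) lands in $\pi^{\Z}$ if and only if $v\circ(\ref{eq4.7})=\mu$ lands in $\Z\subset\Q$, that is, if and only if Raynaud's monodromy factors through $\Z\hookrightarrow\Q$; and by the first step the former is exactly the assumption of Proposition \ref{prop4.2}. The only point requiring care — and the main obstacle — is the descent argument in the first step: one must check that factoring the sheaf-level pairing (\ref{eq4.2}) through $\pi^{\Z}\hookrightarrow(\Gml/\Gm)_{S_{\mr{kfl}}}$ is genuinely equivalent to the group-level statement over $S'$, which I expect to follow because (\ref{eq4.7}) determines (\ref{eq4.2}) and the subsheaf $\pi^{\Z}$ is $\mr{Gal}(S'/S)$-stable. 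The remaining bookkeeping of valuations is routine.
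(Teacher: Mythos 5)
Your proposal is correct and takes essentially the same route as the paper: both reduce to the $\mr{Gal}(S'/S)$-equivariant pairing (\ref{eq4.7}) on $S'$-points (which determines (\ref{eq4.2}) and whose composite with the valuation $v$ is Raynaud's $\mu$), and both rest on the observation that $v^{-1}(\Z)=\pi^{\Z}$ inside $(K')^{\times}/(R')^{\times}=\Gml(S')/\Gm(S')$. The paper merely leaves the valuation computation and the descent of the factorization through $\pi^{\Z}$ implicit, whereas you spell them out; this is a difference in level of detail, not of method.
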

\begin{proof}
If the logarithmic monodromy is induced by $\mu_{\pi}$, then the image of the composition
\begin{equation}\label{eq4.10}
Y\otimes_{\Z}X(S')\xrightarrow{t^{\mr{log}}}E^{\mr{log}}(S')/E(S')\cong\Gml(S')/\Gm(S')
\end{equation}
lands in $\pi^\Z\Gm(S')/\Gm(S')$. Since $t^{\mr{log}}$ restricts to $t_K$, the image of the composition 
\begin{align*}
Y_K\otimes_{\Z} X_K(\Spec K')\rightarrow E_K(\Spec K')/E(S')&\cong \Gm(\Spec K')/\Gm(S')  \\
&=(K')^\times/(R')^\times
\end{align*}
lands in $\pi^\Z(R')^\times/(R')^\times$. Therefore Raynaud's monodromy factors through $\Z\hookrightarrow\Q$.

Conversely, if Raynaud's monodromy factors through $\Z\hookrightarrow\Q$, we clearly have that the image of the composition (\ref{eq4.10}) lands in $\pi^\Z\Gm(S')/\Gm(S')\cong\pi^\Z$. Since the logarithmic monodromy pairing is induced by the pairing (\ref{eq4.7}), we are done.
\end{proof}

Under the assumption that Raynaud's monodromy factors through $\Z\hookrightarrow\Q$, we have Raynaud's factorizations $u_K=u^1_{K,\pi}+u^2_{K,\pi}$ and $u_K=u^1_K+u^2_K$, as well as the factorizations $u^{\mr{log}}=u_{1,\pi}^{\mr{log}}+u_{2,\pi}^{\mr{log}}$ and $u^{\mr{log}}=u_{1}^{\mr{log}}+u_{2}^{\mr{log}}$ from Proposition \ref{prop4.2} by Lemma \ref{lem4.1}. From the constructions of these decompositions, it is easy to check the following proposition.

\begin{prop}
The restrictions of the decompositions $u^{\mr{log}}=u_{1,\pi}^{\mr{log}}+u_{2,\pi}^{\mr{log}}$ and $u^{\mr{log}}=u_{1}^{\mr{log}}+u_{2}^{\mr{log}}$ from $S$ to $\Spec K$ give rise to Raynaud's decompositions $u_K=u^1_{K,\pi}+u^2_{K,\pi}$ and $u_K=u^1_K+u^2_K$ respectively.
\end{prop}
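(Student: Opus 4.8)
The plan is to compare the two families of decompositions summand by summand after restriction to the generic fiber $\Spec K$, using that both are governed by the monodromy data which has already been matched with Raynaud's geometric monodromy above. First I would record the elementary compatibilities: since $M^{\mr{log}}$ extends $M_K$ and the log structure on $\Spec K$ is trivial, one has $G_{\mr{log}}\times_S\Spec K=G_K$, $T_{\mr{log}}\times_S\Spec K=T_K$, and the restriction of $u^{\mr{log}}$ to $\Spec K$ equals $u_K$. By additivity of the decompositions it then suffices to identify the torus-valued summands $u_{2,\pi}^{\mr{log}}$ and $u_2^{\mr{log}}$ with Raynaud's $u^2_{K,\pi}$ and $u^2_K$ after restriction; the complementary summands $u_{1,\pi}^{\mr{log}}$ and $u_1^{\mr{log}}$ then match automatically, since $u^{\mr{log}}\times_S\Spec K=u_K$.

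For the $\pi$-dependent decomposition, both $u_{2,\pi}^{\mr{log}}$ and $u^2_{K,\pi}$ factor through the torus part (through $T_{\mr{log}}$, respectively $T_K$) and are, by construction, determined by the $\Z$-valued (via $\pi^{\Z}$) monodromy pairing. By the comparison of monodromy pairings established above together with Lemma \ref{lem4.1}, the pairing $\mu_{\pi}\colon Y\times X\to\pi^{\Z}$ which induces $u_{2,\pi}^{\mr{log}}$ restricts over the generic fiber to exactly the integral monodromy pairing defining $u^2_{K,\pi}$ in \cite[Prop. 4.5.1]{ray2}. Since over $\Spec K$ we have $\Gml=\Gm$ and the inclusion $\mc{H}om_{S_{\mr{kfl}}}(X,\pi^{\Z})\hookrightarrow T_{\mr{log}}$ restricts to the analogous inclusion $\mc{H}om(X_K,\pi^{\Z})\hookrightarrow T_K$, I conclude $u_{2,\pi}^{\mr{log}}\times_S\Spec K=u^2_{K,\pi}$, and hence $u_{1,\pi}^{\mr{log}}\times_S\Spec K=u_K-u^2_{K,\pi}=u^1_{K,\pi}$.

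For the $\pi$-independent decomposition I would argue through the trivialization pictures on both sides. By the proposition characterizing such decompositions via trivializations of $E$, the decomposition $u^{\mr{log}}=u_1^{\mr{log}}+u_2^{\mr{log}}$ corresponds to a trivialization $t\colon Z\to E$ of the extension $E$ from (\ref{eq4.1}), with $u_2^{\mr{log}}$ matching $t^{\mr{log}}-t$; on Raynaud's side, $u_K=u^1_K+u^2_K$ corresponds to a trivialization $\tau\colon Z_K\to\mc{E}_{\mr{rig}}$ of his extension of $Z_K$ by $U_{\mr{rig}}$. Via the diagram (\ref{eq4.6}), restricting $t$ to the generic fiber produces a trivialization $Z_K\to E_K$ of $E_K$, and this restriction is $\mr{Gal}(K'/K)$-equivariant, so it descends compatibly. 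It remains to identify $E_K$ with $\mc{E}_{\mr{rig}}$ carrying this restricted trivialization to $\tau$: because the two extensions share the same boundary class, namely Raynaud's geometric monodromy which was matched with (\ref{eq4.8}), and a trivialization is precisely a splitting of the extension, the trivializations correspond under this identification. This yields $u_2^{\mr{log}}\times_S\Spec K=u^2_K$ and hence $u_1^{\mr{log}}\times_S\Spec K=u^1_K$.

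The $\pi$-dependent case is essentially immediate once the monodromy comparison is in hand, the only inputs being the triviality of the log structure over $\Spec K$ and additivity. I expect the genuine obstacle to be the final step of the $\pi$-independent case: one must unwind Raynaud's rigid-analytic construction of $\mc{E}_{\mr{rig}}$ from \cite[Rmk. 4.5.2 (iii)]{ray2} and produce a canonical isomorphism with the generic fiber of our algebraic extension $E$ under which $t$ restricts to $\tau$. This should be formal once the two extension classes are seen to agree, but it is the point at which the comparison must be made explicit rather than merely invoked.
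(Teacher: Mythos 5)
Your treatment of the $\pi$-dependent decomposition is correct and is essentially the argument the paper intends (the paper itself offers nothing beyond ``from the constructions of these decompositions, it is easy to check''): both $u_{2,\pi}^{\mr{log}}$ and Raynaud's $u^2_{K,\pi}$ are, by construction, the torus-valued maps induced by the integral monodromy pairing, the comparison proposition of Subsection \ref{subsec4.2} together with Lemma \ref{lem4.1} identifies those pairings over the generic fiber, and additivity then forces the first summands to agree since $u^{\mr{log}}$ restricts to $u_K$.

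The $\pi$-independent case, however, contains a genuine gap --- one you flag yourself. You want a canonical identification of $E_K$ with Raynaud's rigid-analytic extension $\mc{E}_{\mr{rig}}$ carrying the restriction of $t$ to a trivialization $\tau$, and you justify it by saying the two extensions ``share the same boundary class''. Equality of extension classes yields an isomorphism of extensions, but not a canonical one: two such isomorphisms differ by a homomorphism from $Z_K$ to the kernel, and exactly this ambiguity moves one trivialization (hence one decomposition) to another, so ``same class'' does not match trivializations; the needed identification is never constructed. More importantly, the whole detour is unnecessary. Raynaud's Prop.\ 4.5.3 in \cite{ray2} is a bijection between decompositions $u_K=u^1_K+u^2_K$ satisfying the condition analogous to (\ref{eq4.9}) and trivializations of $\mc{E}_{\mr{rig}}$; therefore, to show that the restriction of $u^{\mr{log}}=u_{1}^{\mr{log}}+u_{2}^{\mr{log}}$ gives rise to one of Raynaud's decompositions, it suffices to verify that the restriction satisfies those two conditions, and this follows directly from condition (\ref{eq4.5}): the summand $u_{2}^{\mr{log}}$ factors through $T_{\mr{log}}$, whose restriction to $\Spec K$ (where the log structure is trivial) is $T_K$; and $u_{1}^{\mr{log}}$ is induced by a homomorphism $u_1:Y\to G$ of a k\'et 1-motive over $S$, which becomes a classical 1-motive over the tame Kummer \'etale cover $S'$, so that $[Y_K\xrightarrow{u_1|_K}G_K]$ acquires good reduction over $K'$, i.e.\ has potentially good reduction. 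This verifies Raynaud's characterizing conditions without ever unwinding $\mc{E}_{\mr{rig}}$, which is how the ``easy check'' the paper alludes to should be carried out.
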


\section{Log finite group objects associated to k\'et log 1-motives}\label{sec5}

\subsection{Log finite group objects}
Let $S$ be a locally noetherian fs log scheme. Kato has developed a theory of log finite group objects, which is parallel to the theory of finite flat group schemes in the non-log world. The main references are \cite{kat4} and \cite{mad2}.

\begin{defn}\label{defn5.1}
The category $(\mr{fin}/S)_{\mr{c}}$ is the full subcategory of the category of sheaves of finite abelian groups over $(\mr{fs}/S)_{\mr{kfl}}$ consisting of objects which are representable by a classical finite flat group scheme over $S$. Here ``classical'' means that the log structure of the representing log scheme is the one induced from $S$. 

The category $(\mr{fin}/S)_{\mr{f}}$ is the full subcategory of the category of sheaves of finite abelian groups over $(\mr{fs}/S)_{\mr{kfl}}$ consisting of objects which are representable by a classical finite flat group scheme over a Kummer flat cover of $S$. For $F\in (\mr{fin}/S)_{\mr{f}}$, let $U\rightarrow S$ be a log flat cover of $S$ such that $F_U:=F\times_S U\in (\mr{fin}/U)_{\mr{c}}$. Then the rank of $F$ is defined to be  the rank of $F_U$ over $U$.

The category $(\mr{fin}/S)_{\mr{r}}$ is the full subcategory of $(\mr{fin}/S)_{\mr{f}}$ consisting of objects which are representable by a log scheme over $S$.

The category $(\mr{fin}/S)_{\mr{\acute{e}}}$ is the full subcategory of $(\mr{fin}/S)_{\mr{f}}$ consisting of objects $F$ such that there exists a Kummer \'etale cover $U$ of $S$ such that $F\times_SU\in (\mr{fin}/U)_{\mr{r}}$.

Let $F\in (\mr{fin}/S)_{\mr{f}}$. Then the Cartier dual of $F$ is the sheaf $F^*:=\mc{H}om_{S_{\mr{kfl}}}(F,\Gm)$. By the definition of $(\mr{fin}/S)_{\mr{f}}$, it is clear that $F^*\in (\mr{fin}/S)_{\mr{f}}$.

The category $(\mr{fin}/S)_{\mr{d}}$ is the full subcategory of $(\mr{fin}/S)_{\mr{r}}$ consisting of objects whose Cartier dual also lies in $(\mr{fin}/S)_{\mr{r}}$.
\end{defn}

\begin{prop}[Kato]\label{prop5.1}
The categories $(\mr{fin}/S)_{\mr{f}}$, $(\mr{fin}/S)_{\mr{\acute{e}}}$, $(\mr{fin}/S)_{\mr{r}}$, and  \\
$(\mr{fin}/S)_{\mr{d}}$ are closed under extensions in the category of sheaves of abelian groups on $(\mr{fs}/S)_{\mr{kfl}}$.
\end{prop}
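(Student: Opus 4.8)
The plan is to establish the four cases in the order $\mr{f}$, $\mr{r}$, $\mr{\acute{e}}$, $\mr{d}$, arranging matters so that the genuinely new content is confined to the first two. Throughout I fix a short exact sequence $0\to F'\to F\to F''\to 0$ of sheaves of abelian groups on $(\mr{fs}/S)_{\mr{kfl}}$ with $F'$ and $F''$ in the category under consideration. Since an extension of sheaves of finite abelian groups is again such a sheaf, with order $|F|=|F'|\cdot|F''|$, the point in every case is purely one of representability, and I will produce the required representing object after a suitable cover.

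For $(\mr{fin}/S)_{\mr{f}}$ I first choose a single Kummer flat cover $U\to S$ over which both $F'$ and $F''$ become classical finite flat group schemes $G'$ and $G''$, by taking the fibre product of covers trivializing each factor. It then suffices to show that the Kummer flat extension $F_U$ of $G''$ by $G'$ is classical finite flat after one further Kummer flat cover. Comparing the Kummer flat and classical flat topologies through $\varepsilon_{\mr{fl}}\colon (\mr{fs}/U)_{\mr{kfl}}\to(\mr{fs}/U)_{\mr{fl}}$, the Grothendieck spectral sequence yields, exactly as in the proof of Theorem \ref{thm2.1}, an exact sequence
$$0\to\mr{Ext}^1_{U_{\mr{fl}}}(G'',G')\to\mr{Ext}^1_{U_{\mr{kfl}}}(G'',G')\to\mr{Hom}_{U_{\mr{fl}}}(G'',R^1\varepsilon_{\mr{fl}*}G'),$$
so that the obstruction to $F_U$ descending to a classical flat extension is its image in the last group. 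The sheaf $R^1\varepsilon_{\mr{fl}*}G'$ is $N$-torsion, where $N$ is the order of $G'$, and is built from the factor $\Gml/\Gm$ on which restriction along a Kummer flat cover of degree $m$ acts as multiplication by $m$ (this is the same analysis of $R^1\varepsilon_{\mr{fl}*}$ via Kato's logarithmic Hilbert~90 as in the proof of Proposition \ref{prop2.1}); hence over a cover $V\to U$ with $N\mid m$ the obstruction is annihilated and the class of $F_V$ comes from $\mr{Ext}^1_{V_{\mr{fl}}}(G'',G')$. Since a classical flat extension of classical finite flat group schemes is again one (the middle term is a $G'$-torsor over $G''$, hence finite flat over the base), $F_V$ is classical finite flat and $F\in(\mr{fin}/S)_{\mr{f}}$.

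The case $(\mr{fin}/S)_{\mr{r}}$ is the crux. Here $F'$ and $F''$ are in addition representable by log schemes over $S$, and, having just shown $F\in(\mr{fin}/S)_{\mr{f}}$, I must upgrade the Kummer-flat-local representability of $F$ to representability by a log scheme over $S$ itself. The plan is to realize $F$ as an $F'$-torsor over $F''$ in the Kummer flat topology: it is Kummer flat locally isomorphic to $F'\times_S F''$ as a scheme with $F'$-action, and $F'$ is representable, finite, and flat. Representability of $F$ by a log scheme over $S$ then follows from Kummer flat descent of representability for finite morphisms, that is, the log flat descent results of \cite[Thm. 5.2]{k-k-n4} already invoked in Lemma \ref{lem2.2}, combined with Kato's local classification of objects of $(\mr{fin}/S)_{\mr{r}}$ over a strictly henselian base in order to control the log structure of the descended object. \textbf{This descent of representability is the main obstacle}: the Kummer flat topology is strictly finer than the classical one, so --- in contrast to $(\mr{fin}/S)_{\mr{c}}$, which is \emph{not} closed under extensions --- one cannot expect the representing log scheme to be visible over $S$ without invoking the structure theory of \cite{kat4} and \cite{mad2}.

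The remaining two cases reduce to $(\mr{fin}/S)_{\mr{r}}$. For $(\mr{fin}/S)_{\mr{\acute{e}}}$, choose a common Kummer \'etale cover $U$ of $S$ over which $F'_U$ and $F''_U$ lie in $(\mr{fin}/U)_{\mr{r}}$; applying the $\mr{r}$-case over $U$ to the extension $F_U$ gives $F_U\in(\mr{fin}/U)_{\mr{r}}$, whence $F\in(\mr{fin}/S)_{\mr{\acute{e}}}$ (using also $F\in(\mr{fin}/S)_{\mr{f}}$). For $(\mr{fin}/S)_{\mr{d}}$, the $\mr{r}$-case already gives $F\in(\mr{fin}/S)_{\mr{r}}$, so it remains to treat the Cartier dual. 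Applying $\mc{H}om_{S_{\mr{kfl}}}(-,\Gm)$ to the original sequence and using the vanishing $\mc{E}xt^1_{S_{\mr{kfl}}}(F'',\Gm)=0$ for finite flat $F''$ --- the Kummer flat analogue of the exactness of Cartier duality, proved by the same local computation and descent --- one obtains a short exact sequence $0\to F''^{*}\to F^{*}\to F'^{*}\to 0$ with $F''^{*},F'^{*}\in(\mr{fin}/S)_{\mr{r}}$. The $\mr{r}$-case then yields $F^{*}\in(\mr{fin}/S)_{\mr{r}}$, so $F\in(\mr{fin}/S)_{\mr{d}}$, completing the proof.
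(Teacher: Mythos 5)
You should know at the outset that the paper contains no argument for this proposition: its ``proof'' is the single line ``See \cite[Prop.~2.3]{kat4}'', so your attempt must stand entirely on its own. A good part of it does. The $(\mr{fin}/S)_{\mr{f}}$ case is essentially sound: the spectral-sequence argument places the obstruction in $\mr{Hom}_{U_{\mr{fl}}}(G'',R^1\varepsilon_{\mr{fl}*}G')$, and Kato's computation of $R^1\varepsilon_{\mr{fl}*}$ of a \emph{finite flat} group scheme (a theorem of \cite{kat2}; this, rather than the Kummer \'etale statement for $\Gm$ used in Proposition~\ref{prop2.1}, is the correct reference) shows this sheaf is $N$-torsion and built from $\Gml/\Gm$. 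What you must still make explicit is why restriction to a degree-$m$ Kummer cover kills the obstruction: since $R^1\varepsilon_{\mr{fl}*}G'$ evaluated on an object over $V$ involves that object's own log structure, ``restriction acts as multiplication by $m$'' is not true for arbitrary sections; the honest argument is that $G''$ is \emph{strict} over $U$, so the obstruction class --- by Yoneda an element of $(R^1\varepsilon_{\mr{fl}*}G')(G'')$ --- has its $\Gml/\Gm$-part pulled back from $M_U^{\mr{gp}}/\mc{O}_U^*$, and only such pulled-back classes become $m$-divisible over $V$. With that repair, the $\mr{f}$-case works, and your reductions of the $\mr{\acute{e}}$- and $\mr{d}$-cases to the $\mr{r}$-case (including the vanishing of the sheaf $\mc{E}xt^1_{S_{\mr{kfl}}}(F'',\Gm)$, provable by the same obstruction-killing device) are correctly organized.

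The genuine gap is exactly where you locate the ``crux'': the $\mr{r}$-case, and as written it is fatal. ``Kummer flat descent of representability for finite morphisms'' is not a theorem you can cite. \cite[Thm.~5.2]{k-k-n4} says only that fs log schemes define sheaves on $\fsSkfl$ --- the full-faithfulness half of descent --- and says nothing about \emph{effectivity}, i.e.\ about producing a representing log scheme over $S$ from one over a Kummer flat cover. Non-effectivity of Kummer flat descent is precisely the phenomenon that makes $(\mr{fin}/S)_{\mr{r}}$ a proper subcategory of $(\mr{fin}/S)_{\mr{f}}$ and $(\mr{fin}/S)_{\mr{\acute{e}}}$ in the first place; if the principle you invoke were available, the categories of Definition~\ref{defn5.1} would collapse and the dichotomy in Theorem~\ref{thm5.2} would be empty. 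What the proposition asserts is that representability descends for the \emph{special} sheaves arising as extensions, and this must be proved by actually constructing the representing log scheme --- in \cite{kat4} this rests on the classification of objects of $(\mr{fin}/S)_{\mr{r}}$ over a noetherian strictly henselian base. Your fallback, ``invoking the structure theory of \cite{kat4} and \cite{mad2}'', concedes this but is circular: \cite[Prop.~2.3]{kat4} \emph{is} that structure theory, i.e.\ the very statement under proof. Consequently the $\mr{r}$-case, and with it the $\mr{\acute{e}}$- and $\mr{d}$-cases that you reduce to it, remains unproven.
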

\begin{proof}
See \cite[Prop. 2.3]{kat4}.
\end{proof}

\begin{defn}\label{defn5.2}
Let $p$ be a prime number. A \textbf{log $p$-divisible group} (resp. \textbf{k\'et log $p$-divisible group}, resp. \textbf{kfl log $p$-divisible group}) over $S$ is a sheaf of abelian groups $G$ on $(\mr{fs}/S)_{\mr{kfl}}$ satisfying:
\begin{enumerate}[(1)]
\item $G=\bigcup_{n\geq 0}G_n$ with $G_n:=\mr{ker}(p^n:G\rightarrow G)$;
\item $p:G\rightarrow G$ is surjective;
\item $G_n\in (\mr{fin}/S)_{\mr{r}}$ (resp. $G_n\in (\mr{fin}/S)_{\mr{\acute{e}}}$, resp. $G_n\in (\mr{fin}/S)_{\mr{f}}$) for any $n> 0$.
\end{enumerate}
We denote the category of log $p$-divisible groups (resp. k\'et log $p$-divisible groups, resp. kfl log $p$-divisible groups) over $S$ by $(\text{$p$-div}/S)^{\mr{log}}$ (resp. $(\text{$p$-div}/S)^{\mr{log}}_{\mr{\acute{e}}}$, resp. $(\text{$p$-div}/S)^{\mr{log}}_{\mr{f}}$). The full subcategory of $(\text{$p$-div}/S)^{\mr{log}}$ consisting of objects $G$ with $G_n\in (\mr{fin}/S)_{\mr{d}}$ for all $n>0$ will be denoted by $(\text{$p$-div}/S)^{\mr{log}}_{\mr{d}}$. A log $p$-divisible group $G$ with $G_n\in (\mr{fin}/S)_{\mr{c}}$ for all $n>0$ is clearly just a classical $p$-divisible group, and we denote the full subcategory of $(\text{$p$-div}/S)^{\mr{log}}_{\mr{d}}$ consisting of classical $p$-divisible groups by $(\text{$p$-div}/S)$.
\end{defn}

\begin{rmk}
For $G\in (\text{$p$-div}/S)^{\mr{log}}$ to lie in $(\text{$p$-div}/S)^{\mr{log}}_{\mr{d}}$, it is enough to require $G_1\in(\mr{fin}/S)_{\mr{d}}$. We explain this as follows. The short exact sequence $0\to G_1\to G_2\to G_1\to0$ gives an exact sequence
$$0\to G_1^*\to G_2^*\xrightarrow{\alpha} G_1^*\to \mc{E}xt^1_{S_{\mr{fl}}}(G_1,\Gm).$$
We claim that $\alpha$ is an epimorphism for the Kummer flat topology. To prove the claim, we may assume that $G_1$ is a classical finite flat group scheme. Then we have $\mc{E}xt^1_{S_{\mr{fl}}}(G_1,\Gm)=0$ by \cite[Exp. VIII, Prop. 3.3.1]{sga7-1} and part (1) of \cite[\href{https://stacks.math.columbia.edu/tag/0DDS}{Tag 0DDS}]{stacks-project}, whence a short exact sequence
$$0\to G_1^*\to G_2^*\to G_1^*\to0$$
of sheaves of abelian groups on $(\mr{fs}/S)_{\mr{kfl}}$.
Therefore $G_2^*\in(\mr{fin}/S)_{\mr{r}}$ by \cite[Prop. 2.3]{kat4}. Inductively we get $G_n^*\in(\mr{fin}/S)_{\mr{r}}$ for all $n>1$. 
\end{rmk}

\subsection{Log finite group objects associated to k\'et log 1-motives}
\begin{defn}
Let $S$ be an fs log scheme, $M^{\mr{log}}=[Y\xrightarrow{u}G_{\mr{log}}]$ a k\'et log 1-motive over $S$, and $n$ a positive integer. By Lemma \ref{lem2.2} and Corollary \ref{cor2.1}, we can regard $M^{\mr{log}}$ as a complex of sheaves on $(\mr{fs}/S)_{\mr{kfl}}$, and define 
$$T_{\Z/n\Z}(M^{\mr{log}}):=H^{-1}(M^{\mr{log}}\otimes_{\Z}^{\mr{L}}\Z/n\Z).$$
\end{defn}

\begin{prop}
Let $S$ be a locally noetherian fs log scheme, 
$$M^{\mr{log}}=[Y\xrightarrow{u}G_{\mr{log}}]$$
a k\'et log 1-motive over $S$, and $n$ a positive integer. Then we have the following.
\begin{enumerate}[(1)]
\item $T_{\Z/n\Z}(M^{\mr{log}})$ fits into the following exact sequence
$$0\rightarrow G_{\mr{log}}[n]\rightarrow T_{\Z/n\Z}(M^{\mr{log}})\rightarrow Y/nY\rightarrow0$$
of sheaves of abelian groups on $(\mr{fs}/S)_{\mr{kfl}}$, where $G_{\mr{log}}[n]$ denotes the $n$-torsion subgroup sheaf of $G_{\mr{log}}$.
\item $T_{\Z/n\Z}(M^{\mr{log}})\in(\mr{fin}/S)_{\mr{\acute{e}}}$.
\item Let $m$ be another positive integer. Then the map $T_{\Z/mn\Z}(M^{\mr{log}})\rightarrow T_{\Z/n\Z}(M^{\mr{log}})$ induced by $\Z/mn\Z\xrightarrow{m}\Z/n\Z$ is surjective.
\item If $M^{\mr{log}}$ is a log 1-motive, then $T_{\Z/n\Z}(M^{\mr{log}})\in(\mr{fin}/S)_{\mr{d}}$.
\end{enumerate}
\end{prop}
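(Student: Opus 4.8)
The plan is to extract all four assertions from the derived tensor product, with the short exact sequence of part (1) serving as the backbone for parts (2)--(4). For part (1) I would resolve $\Z/n\Z$ by $[\Z\xrightarrow{n}\Z]$ and combine this with the stupid filtration $0\to G_{\mr{log}}\to M^{\mr{log}}\to Y[1]\to0$, so that $M^{\mr{log}}\otimes^{\mr{L}}_{\Z}\Z/n\Z$ lies in a distinguished triangle built from $G_{\mr{log}}\otimes^{\mr{L}}\Z/n\Z$ and $Y\otimes^{\mr{L}}\Z/n\Z$. Since $Y$ is Kummer \'etale locally finite free it has no higher Tor, so $Y\otimes^{\mr{L}}\Z/n\Z=Y/nY$ sits in degree $0$. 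The essential local input is that $n\colon G_{\mr{log}}\to G_{\mr{log}}$ is an epimorphism for the Kummer flat topology: $G_{\mr{log}}$ is an extension of the k\'et abelian scheme $B$, on which $n$ is plainly a kfl-epimorphism, by $T_{\mr{log}}=\mc{H}om_{S_{\mr{k\acute{e}t}}}(X,\Gml)$, and $n\colon\Gml\to\Gml$ is a kfl-epimorphism by Kummer theory. Hence $G_{\mr{log}}\otimes^{\mr{L}}\Z/n\Z$ is concentrated in degree $-1$, namely $G_{\mr{log}}[n]$, and the long exact cohomology sequence of the triangle collapses to $0\to G_{\mr{log}}[n]\to T_{\Z/n\Z}(M^{\mr{log}})\to Y/nY\to0$.

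For part (2) I would invoke that $(\mr{fin}/S)_{\mr{\acute{e}}}$ is closed under extensions (Proposition \ref{prop5.1}), reducing to placing the two outer terms of part (1) in $(\mr{fin}/S)_{\mr{\acute{e}}}$. The sheaf $Y/nY$ becomes constant, hence classical finite \'etale, after a Kummer \'etale cover. For $G_{\mr{log}}[n]$, the snake lemma applied to $0\to T_{\mr{log}}\to G_{\mr{log}}\to B\to0$ together with $T_{\mr{log}}/nT_{\mr{log}}=0$ gives $0\to T_{\mr{log}}[n]\to G_{\mr{log}}[n]\to B[n]\to0$; over a Kummer \'etale cover trivializing $X$ and making $B$ an abelian scheme we have $T_{\mr{log}}[n]\cong\mu_n^{r}$ (since $\Gml[n]=\mu_n$) and $B[n]$ classical finite flat, so $G_{\mr{log}}[n]\in(\mr{fin}/S)_{\mr{\acute{e}}}$. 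Part (3) is then a diagram chase: the transition map respects part (1), inducing reduction $Y/mnY\to Y/nY$ (surjective) on the quotients and multiplication by $m$, $G_{\mr{log}}[mn]\to G_{\mr{log}}[n]$, on the subobjects; the latter is a kfl-epimorphism because any local section $g$ with $ng=0$ lifts, by surjectivity of $m$ on $G_{\mr{log}}$, to some $h$ with $mh=g$ and automatically $mnh=0$. The four lemma then yields surjectivity in the middle.

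For part (4), representability in $(\mr{fin}/S)_{\mr{r}}$ follows once more from part (1) and Proposition \ref{prop5.1}: for a log $1$-motive $X$ and $Y$ are \'etale locally constant and $G$ is a classical semi-abelian scheme, and since $\Gmlb[n]=0$ one gets $G_{\mr{log}}[n]=G[n]$, a classical finite flat group scheme, while $Y/nY$ is classical finite \'etale, so their extension lies in $(\mr{fin}/S)_{\mr{r}}$. It remains to control the Cartier dual, and this is the decisive step: I would identify $T_{\Z/n\Z}(M^{\mr{log}})^{*}$ with $T_{\Z/n\Z}(M^{\vee})$, where $M^{\vee}$ is the dual k\'et log $1$-motive. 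The needed perfect pairing $T_{\Z/n\Z}(M^{\mr{log}})\times T_{\Z/n\Z}(M^{\vee})\to\mu_n$ should be assembled from the Poincar\'e biextension $P^{\mr{log}}$, whose restriction to $n$-torsion supplies the $\mu_n$-valued Weil pairing on $(B[n],B^{\vee}[n])$ (again using $\Gml[n]=\mu_n$), glued to the tautological pairings between the lattice and toric parts, exactly as in Deligne's construction \cite[\S10.2]{del1}; perfectness can be verified after a Kummer \'etale cover, where it descends to the classical statement. Since duality sends log $1$-motives to log $1$-motives, $M^{\vee}$ is again a log $1$-motive, so the representability just established applies to it and gives $T_{\Z/n\Z}(M^{\mr{log}})^{*}=T_{\Z/n\Z}(M^{\vee})\in(\mr{fin}/S)_{\mr{r}}$, whence $T_{\Z/n\Z}(M^{\mr{log}})\in(\mr{fin}/S)_{\mr{d}}$. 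Constructing this log Weil pairing and checking its perfectness is the step I expect to demand the most care.
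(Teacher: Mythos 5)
Your proposal is correct and takes essentially the same route as the paper: part (1) via injectivity of $n$ on $Y$ and kfl-surjectivity of $n$ on $G_{\mr{log}}$ checked separately on $T_{\mr{log}}$ (Kummer theory for $\Gml$) and on $B$; parts (2)--(3) by splitting off the outer terms $G_{\mr{log}}[n]$ (with $T_{\mr{log}}[n]=T[n]$ from torsion-freeness of $\Gml/\Gm$) and $Y/nY$ and invoking closure under extensions; and part (4) via the identification $T_{\Z/n\Z}(M^{\mr{log}})^{*}=T_{\Z/n\Z}((M^{\mr{log}})^{\vee})$. The only differences are expository: the paper outsources the homological algebra of (1) to Raynaud and asserts the duality identification in (4) without constructing the Weil-type pairing, both of which you spell out explicitly.
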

\begin{proof}
For part (1), by \cite[\S 3.1]{ray2}, it suffices to show that the multiplication by $n$ is injective on $Y$ and surjective on $G_{\mr{log}}$ for the Kummer flat topology. The injectivity of the map $Y\xrightarrow{n}Y$ is trivial. We are reduced to show the surjectivity of the map $G_{\mr{log}}\xrightarrow{n}G_{\mr{log}}$.  Without loss of generality, we may assume that $M^{\mr{log}}$ is a log 1-motive. Let $G$ be an extension of an abelian scheme $B$ by a torus $T$ over $S$. Consider the following commutative diagram
$$\xymatrix{
0\ar[r] &T_{\mr{log}}\ar[r]\ar[d]^n &G_{\mr{log}}\ar[r]\ar[d]^n &B\ar[r]\ar[d]^n &0   \\
0\ar[r] &T_{\mr{log}}\ar[r] &G_{\mr{log}}\ar[r] &B\ar[r] &0 
}$$
with exact rows. The multiplication by $n$ is clearly surjective on $B$, and the surjectivity of the multiplication by $n$ on $T_{\mr{log}}$ follows from the surjectivity of $\Gml\xrightarrow{n}\Gml$. It follows that $G_{\mr{log}}\xrightarrow{n}G_{\mr{log}}$ is surjective.

For part (2), we may still assume that $M^{\mr{log}}$ is a log 1-motive. We have a short exact sequence $0\rightarrow T_{\mr{log}}[n]\rightarrow G_{\mr{log}}[n]\rightarrow B[n]\rightarrow0$. Let $X$ be the character group of $T$. Then we get an exact sequence $$0\rightarrow T\rightarrow T_{\mr{log}}\rightarrow\mc{H}om_{S_{\mr{kfl}}}(X,\Gml/\Gm)\rightarrow0.$$
Since $\Gml/\Gm$ is torsion-free, we get $T[n]=T_{\mr{log}}[n]$. Then we get a short exact sequence $0\rightarrow T[n]\rightarrow G_{\mr{log}}[n]\rightarrow B[n]\rightarrow0$. Therefore $G_{\mr{log}}[n]\in(\mr{fin}/S)_{\mr{r}}$ by Proposition \ref{prop5.1}. Applying Proposition \ref{prop5.1} again to the short exact sequence 
$$0\rightarrow G_{\mr{log}}[n]\rightarrow T_{\Z/n\Z}(M^{\mr{log}})\rightarrow Y/nY\rightarrow0,$$
we get $T_{\Z/n\Z}(M^{\mr{log}})\in(\mr{fin}/S)_{\mr{r}}$.

Part (3) is clearly true for the two k\'et log 1-motives $[Y\rightarrow 0]$ and $[0\rightarrow G_{\mr{log}}]$. It follows that it also holds for $M^{\mr{log}}$.

At last, we prove part (4). By the proof of part (2) we get $T_{\Z/n\Z}(M^{\mr{log}})\in(\mr{fin}/S)_{\mr{r}}$. Similarly, we have $T_{\Z/n\Z}(M^{\mr{log}})^{*}=T_{\Z/n\Z}((M^{\mr{log}})^{\vee})\in(\mr{fin}/S)_{\mr{r}}$, where $(M^{\mr{log}})^{\vee}$ denotes the dual of the log 1-motive $M^{\mr{log}}$. It follows that $T_{\Z/n\Z}(M^{\mr{log}})\in(\mr{fin}/S)_{\mr{d}}$.
\end{proof}

\begin{defn}
Let $S$ be a locally noetherian fs log scheme, 
$$M^{\mr{log}}=[Y\xrightarrow{u}G_{\mr{log}}]$$
a k\'et log 1-motive over $S$, and $p$ a prime number. The \textbf{k\'et log $p$-divisible group of $M^{\mr{log}}$} is defined to be $M^{\mr{log}}[p^{\infty}]:=\bigcup_n T_{\Z/p^n\Z}(M^{\mr{log}})$. 
\end{defn}

\subsection{Extending finite group schemes associated to tamely ramified strict 1-motives}

\begin{thm}\label{thm5.1}
Let the notation and the assumptions be as in Theorem \ref{thm3.1}, and let $n$ be a positive integer. Then $T_{\Z/n\Z}(M^{\mr{log}})$ lies in $(\mr{fin}/S)_{\mr{\acute{e}}}$, and it extends the finite group scheme $T_{\Z/n\Z}(M_K)$ over $K$ to $S$.
\end{thm}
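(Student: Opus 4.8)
The plan is to separate the statement into its two assertions and treat them independently. The membership $T_{\Z/n\Z}(M^{\mr{log}})\in(\mr{fin}/S)_{\mr{\acute{e}}}$ requires nothing new: it is exactly part (2) of the Proposition established just above, applied to the k\'et log 1-motive $M^{\mr{log}}$ attached to $M_K$ by Theorem \ref{thm3.1} (note that $S=\Spec R$ is locally noetherian, so the Proposition applies). Hence the only point genuinely demanding an argument is that $T_{\Z/n\Z}(M^{\mr{log}})$ restricts over the generic point to the classical finite group scheme $T_{\Z/n\Z}(M_K)$, i.e. that $T_{\Z/n\Z}(M^{\mr{log}})\times_S\Spec K\cong T_{\Z/n\Z}(M_K)$.

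For this extension claim I would first record that the log structure inherited by $\eta=\Spec K$ from $S$ is trivial, since the chosen uniformizer $\pi$ is a unit in $K$. Consequently the restriction $j^*\Gml$ coincides with $\Gm$ on $\eta_{\mr{fl}}=\eta_{\mr{k\acute{e}t}}$, so that the restriction to $\eta$ of the logarithmic augmentation $G_{\mr{log}}$ is simply $G_K$. Combined with the fact, built into the construction in the proof of Theorem \ref{thm3.1}, that $M^{\mr{log}}=[Y\xrightarrow{u^{\mr{log}}}G_{\mr{log}}]$ restricts over $\eta$ to $M_K=[Y_K\xrightarrow{u_K}G_K]$, this yields $j^*M^{\mr{log}}=M_K$ as two-term complexes of sheaves for the flat topology on $\eta$.

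The second step is to observe that forming $T_{\Z/n\Z}$ commutes with passage to the generic fibre. The restriction functor $j^*$ from sheaves of abelian groups on $(\mr{fs}/S)_{\mr{kfl}}$ to sheaves on $(\mr{fs}/\eta)_{\mr{fl}}$ is exact and symmetric monoidal, hence commutes both with $-\otimes_{\Z}^{\mr{L}}\Z/n\Z$ and with the cohomology functor $H^{-1}$. Feeding in the previous step gives
$$j^*T_{\Z/n\Z}(M^{\mr{log}})=T_{\Z/n\Z}(j^*M^{\mr{log}})=T_{\Z/n\Z}(M_K),$$
where $T_{\Z/n\Z}(M_K)$ is the usual finite flat group scheme associated to the 1-motive $M_K$ over the field $K$. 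This is precisely the asserted extension, and together with the first paragraph it completes the proof.

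The argument is essentially formal once the preceding Proposition is available, and I would expect the only delicate point to be the clean justification that $j^*$ is exact and compatible with the derived tensor product, so that restricting to $\eta$ really does commute with the definition of $T_{\Z/n\Z}$; the remaining ingredients are bookkeeping around the triviality of the log structure at the generic point and the already-recorded fact that $M^{\mr{log}}$ extends $M_K$.
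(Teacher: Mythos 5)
Your overall strategy coincides with the paper's: membership in $(\mr{fin}/S)_{\mr{\acute{e}}}$ comes from the general Proposition on k\'et log 1-motives (the paper instead re-runs that argument over the Kummer \'etale cover $S'$, which is the same content), and the extension statement comes from commuting $T_{\Z/n\Z}$ with restriction to the generic fibre. The first half of your argument is correct as written.

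The second half has a genuine gap: the two properties you require of $j^*$ cannot both hold for the same functor without further argument. If $j^*$ is restriction to the localized site $(\mr{fs}/\eta)_{\mr{kfl}}$ --- which is indeed exact (it has adjoints on both sides) and therefore commutes with $-\otimes_{\Z}^{\mr{L}}\Z/n\Z$ and $H^{-1}$ --- then $j^*\Gml\neq\Gm$: an object of $(\mr{fs}/\eta)$ may carry an arbitrary fs log structure, so the triviality of the log structure of $\eta$ itself does not help, and $j^*G_{\mr{log}}$ is the logarithmic augmentation $(G_K)_{\mr{log}}$ of $G_K$ over $\eta$, not $G_K$; in particular $j^*M^{\mr{log}}\neq M_K$ on the nose. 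If instead $j^*$ means restriction to strict (classical) $K$-schemes, then $j^*\Gml=\Gm$ does hold, but that restriction is only left exact a priori (an epimorphism of Kummer flat sheaves need not become surjective flat-locally on classical $K$-schemes), so the commutation with $T_{\Z/n\Z}$ is exactly what would need proof. (Incidentally, the identification $\eta_{\mr{fl}}=\eta_{\mr{k\acute{e}t}}$ is false, though that is not where the problem lies.) The repair is short and is the observation already used in the proof of part (2) of the preceding Proposition: take $j^*$ to be the exact restriction to $(\mr{fs}/\eta)_{\mr{kfl}}$, so that $j^*T_{\Z/n\Z}(M^{\mr{log}})=T_{\Z/n\Z}\bigl([Y_K\xrightarrow{u_K}(G_K)_{\mr{log}}]\bigr)$, and then note that $(G_K)_{\mr{log}}/G_K\cong\mc{H}om_{\eta_{\mr{kfl}}}(X_K,\Gmlb)$ is torsion-free; hence the inclusion of complexes $[Y_K\xrightarrow{u_K}G_K]\hookrightarrow[Y_K\xrightarrow{u_K}(G_K)_{\mr{log}}]$ has cokernel with vanishing $n$-torsion, and the long exact sequence for $-\otimes_{\Z}^{\mr{L}}\Z/n\Z$ gives $T_{\Z/n\Z}(M_K)\xrightarrow{\cong}T_{\Z/n\Z}\bigl([Y_K\to(G_K)_{\mr{log}}]\bigr)$. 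With this inserted your argument is complete and agrees with the paper's proof, which states the key identification tersely as ``$M^{\mr{log}}\times_S\Spec K=M_K$'' but has the torsion-freeness ingredient available from the Proposition; your write-up, as it stands, replaces that ingredient with an identification that is not available.
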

\begin{proof}
Since $T_{\Z/n\Z}(M^{\mr{log}})\times_SS'=T_{\Z/n\Z}(M^{\mr{log}}\times_SS')\in(\mr{fin}/S)_{\mr{r}}$ and $S'$ is a Kummer \'etale cover of $S$, we get $T_{\Z/n\Z}(M^{\mr{log}})\in(\mr{fin}/S)_{\mr{\acute{e}}}$. Since $M^{\mr{log}}\times_S\Spec K=M_K$, we get $T_{\Z/n\Z}(M^{\mr{log}})\times_S\Spec K=T_{\Z/n\Z}(M_K)$.
\end{proof}

The following theorem is stated in \cite[\S 4.3]{kat4} without proof. Here we present a proof.

\begin{thm}[Kato]\label{thm5.2}
Let $K$ be a complete discrete valuation field with ring of integers $R$, $p$ a prime number, and $A_K$ a tamely ramified abelian variety over $K$. We endow $S:=\Spec R$ with the canonical log structure. Then the $p$-divisible group $A_K[p^{\infty}]$ of $A_K$ extends to an object of $(p\mr{\textnormal{-}div}/S)^{\mr{log}}_{\mr{\acute{e}}}$. It extends to an object of $(p\mr{\textnormal{-}div}/S)^{\mr{log}}_{\mr{d}}$ if any of the following two conditions is satisfied.
\begin{enumerate}[(1)]
\item $A_K$ has semi-stable reduction.
\item $p$ is invertible in $R$.
\end{enumerate} 
\end{thm}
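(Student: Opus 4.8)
The plan is to realise $A_K[p^\infty]$ as the $p$-divisible group of a k\'et log $1$-motive attached to $A_K$, and then to read off its properties from Theorem~\ref{thm3.1} together with the results of this section. First I would choose a finite tamely ramified Galois extension $K'/K$, with group $\Gamma:=\mr{Gal}(K'/K)$, over which $A_{K'}:=A_K\times_{\Spec K}\Spec K'$ acquires semi-stable reduction. Over $K'$ the Raynaud uniformisations of $A_{K'}$ and of its dual produce a $1$-motive $M_{K'}=[Y_{K'}\xrightarrow{u_{K'}}G_{K'}]$ in which both $Y_{K'}$ and $G_{K'}$ have good reduction, see \cite[Exp.~IX]{sga7-1}. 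Since this construction is functorial in the abelian variety, the $\Gamma$-action on $A_{K'}$ descends all of the data, so $M_{K'}$ descends to a $1$-motive $M_K=[Y_K\xrightarrow{u_K}G_K]$ over $K$; by construction $M_K$ is tamely ramified, and it is strict because $G_K$ has potentially good reduction. The uniformisation $A_{K'}=G_{K'}/Y_{K'}$ gives, for every $n$, a short exact sequence $0\to G_{K'}[n]\to A_{K'}[n]\to Y_{K'}/nY_{K'}\to 0$; comparing this with the exact sequence of part~(1) of the proposition on $T_{\Z/n\Z}(M^{\mr{log}})$ and descending along $\Gamma$ yields a canonical isomorphism $T_{\Z/n\Z}(M_K)\cong A_K[n]$ over $K$, compatible with the full Galois action.

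Next I would invoke Theorem~\ref{thm3.1} to extend $M_K\in\mr{TameSt\textnormal{-}1\textnormal{-}Mot}_K$ to a unique k\'et log $1$-motive $M^{\mr{log}}=[Y\xrightarrow{u^{\mr{log}}}G_{\mr{log}}]$ over $S$, and set $M^{\mr{log}}[p^\infty]:=\bigcup_n T_{\Z/p^n\Z}(M^{\mr{log}})$. By the proposition on $T_{\Z/n\Z}(M^{\mr{log}})$, multiplication by $p$ is surjective on $M^{\mr{log}}[p^\infty]$, the transition maps $T_{\Z/p^{n+1}\Z}(M^{\mr{log}})\to T_{\Z/p^n\Z}(M^{\mr{log}})$ are surjective, and each $T_{\Z/p^n\Z}(M^{\mr{log}})$ lies in $(\mr{fin}/S)_{\mr{\acute{e}}}$; hence $M^{\mr{log}}[p^\infty]\in(p\mr{\textnormal{-}div}/S)^{\mr{log}}_{\mr{\acute{e}}}$. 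Combining Theorem~\ref{thm5.1} with the isomorphism above, $T_{\Z/p^n\Z}(M^{\mr{log}})\times_S\Spec K\cong A_K[p^n]$, so $M^{\mr{log}}[p^\infty]$ extends $A_K[p^\infty]$. This settles the unconditional assertion.

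It then remains to upgrade each $T_{\Z/p^n\Z}(M^{\mr{log}})$ from $(\mr{fin}/S)_{\mr{\acute{e}}}$ to $(\mr{fin}/S)_{\mr{d}}$ under either hypothesis. Under hypothesis~(1) I may take $K'=K$, so that $Y_K$ and $G_K$ have good reduction and $M^{\mr{log}}$ is a genuine log $1$-motive by Corollary~\ref{cor3.1}; part~(4) of the proposition on $T_{\Z/n\Z}(M^{\mr{log}})$ then gives $T_{\Z/p^n\Z}(M^{\mr{log}})\in(\mr{fin}/S)_{\mr{d}}$. Under hypothesis~(2), $p$ is invertible on $S$ and hence on $S'$, so $B[p^n]$, $T[p^n]=T_{\mr{log}}[p^n]$ and $Y/p^nY$ are all finite and Kummer \'etale locally constant; being an iterated extension of such sheaves, $T_{\Z/p^n\Z}(M^{\mr{log}})$ is finite and Kummer \'etale locally constant, whence representable by a finite Kummer \'etale log scheme over $S$ and therefore an object of $(\mr{fin}/S)_{\mr{r}}$. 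Using the identity $T_{\Z/p^n\Z}(M^{\mr{log}})^{*}=T_{\Z/p^n\Z}((M^{\mr{log}})^{\vee})$ from the proof of that proposition and running the same argument for the dual k\'et log $1$-motive $(M^{\mr{log}})^{\vee}$ (again with $p$ invertible), the Cartier dual also lies in $(\mr{fin}/S)_{\mr{r}}$; thus $T_{\Z/p^n\Z}(M^{\mr{log}})\in(\mr{fin}/S)_{\mr{d}}$ and $M^{\mr{log}}[p^\infty]\in(p\mr{\textnormal{-}div}/S)^{\mr{log}}_{\mr{d}}$.

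The step I expect to be the main obstacle is the representability descent in case~(2): one must verify that Kummer \'etale local representability of the $p$-torsion genuinely upgrades to representability by a log scheme over $S$, together with the corresponding statement for the Cartier dual. This is precisely where invertibility of $p$ is essential, since it forces all the relevant torsion sheaves to be finite Kummer \'etale locally constant rather than merely finite flat, so that effectivity of Kummer \'etale descent for finite Kummer \'etale covers applies. A secondary point demanding care is the clean, Galois-equivariant identification $T_{\Z/n\Z}(M_K)\cong A_K[n]$, which bridges $A_K$ and the $1$-motive $M_K$ before Theorem~\ref{thm3.1} can be brought to bear.
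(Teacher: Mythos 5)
Your proposal is correct and its skeleton is identical to the paper's proof: realize $A_K[p^\infty]$ as $M_K[p^\infty]$ for a tamely ramified strict 1-motive $M_K$, extend $M_K$ to a k\'et log 1-motive via Theorem~\ref{thm3.1}, conclude membership in $(p\mr{\textnormal{-}div}/S)^{\mr{log}}_{\mr{\acute{e}}}$ via Theorem~\ref{thm5.1}, and settle case (1) by Corollary~\ref{cor3.1} together with part (4) of the proposition on $T_{\Z/n\Z}(M^{\mr{log}})$. Where you genuinely differ is that you unpack two ingredients the paper imports by citation. First, for the existence of $M_K$ with $M_K[p^\infty]=A_K[p^\infty]$ (with $Y_K,G_K$ of good reduction when $A_K$ is semi-stable), the paper simply cites \cite[\S 4.2]{ray2}; your reconstruction via Raynaud uniformization over $K'$ and Galois descent is precisely the content of that reference, so this is re-derivation rather than a new route, and the two points you would have to check carefully (effectivity of Galois descent for the triple $(Y_{K'},G_{K'},u_{K'})$ and $\Gamma$-equivariance of the uniformization exact sequence) are exactly what Raynaud's result packages. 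Second, and more substantively, for case (2) the paper invokes \cite[Prop. 2.1]{kat4}, whereas you give a self-contained argument: with $p$ invertible, $T[p^n]=T_{\mr{log}}[p^n]$, $B[p^n]$ and $Y/p^nY$ are finite Kummer \'etale locally constant, iterated extensions of such sheaves remain Kummer \'etale locally constant (the torsor part of an extension class dies on a cover, and what remains is a pushout of constant sheaves), and finite Kummer \'etale locally constant sheaves are representable by log schemes over $S$ via the log fundamental group correspondence of \cite[\S 4.6--4.7]{ill1} --- the same input the paper already uses in Proposition~\ref{prop3.1}; running this for the dual k\'et log 1-motive and using $T_{\Z/p^n\Z}(M^{\mr{log}})^{*}=T_{\Z/p^n\Z}((M^{\mr{log}})^{\vee})$ handles the Cartier dual. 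This amounts to proving the special case of Kato's proposition that is needed; it is sound, and its virtue is self-containedness, while the paper's citation buys brevity and the general statement (any object of $(\mr{fin}/S)_{\mr{\acute{e}}}$ of invertible order lies in $(\mr{fin}/S)_{\mr{d}}$) rather than just the instance arising from a k\'et log 1-motive.
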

\begin{proof}
By \cite[\S 4.2]{ray2}, there exists a tamely ramified strict 1-motive $M_K=[Y_K\xrightarrow{u_K}G_K]$ such that $M_K[p^{\infty}]=A_K[p^{\infty}]$, and both $Y_K$ and $G_K$ have good reduction if $A_K$ has semi-stable reduction. By Theorem \ref{thm3.1}, $M_K$ extends to a k\'et log 1-motive $M^{\mr{log}}=[Y\xrightarrow{u^{\mr{log}}}G_{\mr{log}}]$. Then $M_K[p^{\infty}]$ extends to $M^{\mr{log}}[p^{\infty}]\in (\text{$p$-div}/S)^{\mr{log}}_{\mr{\acute{e}}}$ by Theorem \ref{thm5.1}. 

If $A_K$ has semi-stable reduction, then both $Y_K$ and $G_K$ have good reduction. Therefore the k\'et log 1-motive $M^{\mr{log}}$ is actually a log 1-motive over $S$ by Corollary \ref{cor3.1}. It follows that $M^{\mr{log}}[p^{\infty}]\in (\text{$p$-div}/S)^{\mr{log}}_{\mr{d}}$. 

If $p$ is invertible in $R$, then the object $T_{\Z/p^n\Z}(M^{\mr{log}})\in (\mr{fin}/S)_{\mr{\acute{e}}}$ actually lies in $(\mr{fin}/S)_{\mr{d}}$ by \cite[Prop. 2.1]{kat4}. It follows that $M^{\mr{log}}[p^{\infty}]\in (\text{$p$-div}/S)^{\mr{log}}_{\mr{d}}$.
\end{proof}

\section*{Acknowledgement}
In an email, Professor Chikara Nakayama informed the author that Professor Kazuya Kato thought it plausible that every abelian variety (not necessarily with semi-stable reduction) on a complete discrete valuation field extends uniquely to a Kummer log flat log abelian variety on the corresponding discrete valuation ring. This work is partly motivated by that piece of information. It is also motivated by Theorem \ref{thm5.2} which is taken from \cite[\S 4.3]{kat4}. The author thanks Professor Chikara Nakayama for his generosity. The author would also thank Professor Ulrich G\"ortz for very helpful discussions concerning taking quotient for equivalence relations, as well as for his support during the past few years.

The author thanks the anonymous referee for her or his corrections. More importantly, he would like to thank the referee for her or his questions and comments which not just motivated the author to give a more systematic treatment to the results in Section \ref{sec3}, but also directed him to possible research problems in future.

This work has been partially supported by SFB/TR 45 ``Periods, moduli spaces and arithmetic of algebraic varieties''.

\bibliographystyle{alpha}
\bibliography{bib}

\end{document}